\numberwithin{equation}{section}
\newtheorem{theorem}{Theorem}[section]
\newtheorem{lemma}[theorem]{Lemma}
\newtheorem{proposition}[theorem]{Proposition}
\newtheorem{definition}[theorem]{Definition}
\newtheorem{remark}[theorem]{Remark}
\newtheorem{convention}[theorem]{Convention}
\newcommand{\NN}{\mathbb{N}}
\newcommand{\ZZ}{\mathbb{Z}}
\newcommand{\QQ}{\mathbb{Q}}
\newcommand{\QQQ}{\hat{\mathbb{Q}}}
\newcommand{\RR}{\mathbb{R}}
\newcommand{\RRR}{\hat{\mathbb{R}}}
\newcommand{\CC}{\mathbb{C}}
\newcommand{\CCC}{\hat{\mathbb{C}}}
\newcommand{\HH}{\mathbb{H}}
\newcommand{\PSL}{\operatorname{PSL}}
\newcommand{\LL}{\mathcal{L}}
\newcommand{\trg}{\mbox{$\mathrm{trg}$}}
\newcommand{\CT}{\mbox{$\kappa$}}
\newcommand{\CTM}{\mbox{$\kappa_{\mathcal{P}}$}}
\newcommand{\OO}{\mathcal{O}}
\newcommand{\End}{\mathcal{E}}
\newcommand{\Parabolic}{\mathbb{P}}
\newcommand{\Partition}{\mathcal{P}}
\newcommand{\Sphere}{\mathbb{S}_{\mathcal{P}}}
\newcommand{\CCM}{\mathbb{C}_{\mathcal{P}}}
\newcommand{\inftym}{\infty_{\mathcal{P}}}
\newcommand{\spider}{\mathfrak{s}}
\newcommand{\leg}{\ell}
\newcommand{\FF}{\mathcal{F}}
\newcommand{\Tbundle}{\mbox{$T_{\varphi}$}}
\newcommand{\Obundle}{\mbox{$\mathcal{O}_{\varphi}$}}
\newcommand{\cellcomplex}{\mbox{$CW\Delta$}}
\newcommand{\inn}[1]{\ensuremath{\begin{aligned}[b] &{}\\[-.7cm] &{}\mkern3mu\scriptscriptstyle{\circ}\\[.15cm]
                                  \end{aligned}}\mkern-12mu#1}
\renewcommand{\le}{\leqslant}
\renewcommand{\ge}{\geqslant}
\begin{document}
\title{\mbox{{\hskip-.9cm} On hyperbolic once-punctured-torus bundles III:}\\
\mbox{ {\hskip-1.1cm} \small Comparing two tessellations of the complex plane}}

\author{Warren Dicks and Makoto Sakuma}

\thanks{The first author is supported jointly
by the MEC (Spain) and the EFRD~(EU) through Projects MTM2006-13544 and MTM2008-01550.}
\thanks{
The second author is supported by JSPS Grants-in-Aid 18340018, and
is partially supported by JSPS Core-to-Core Program 18005.}

\address{Warren Dicks, Departament de Matem\`atiques, Universitat Aut\`o\-no\-ma
de Barcelona, 08193 Bellaterra (Barcelona), Spain.}
\email{dicks@mat.uab.cat}

\address{Makoto Sakuma,
Department of Mathematics,
Graduate School of Science,
Hiroshima University,
Kagamiyama 1-3-1,
Higashi-Hiroshima,
Hiroshima 739-8526,
Japan}
\email{sakuma@math.sci.hiroshima-u.ac.jp}

\begin{abstract}
To each once-punctured-torus bundle, $\Tbundle$, over the circle
with pseudo-Anosov monodromy $\varphi$, there are associated two
tessellations of the complex plane: one,  $\Delta(\varphi)$,  is
(the projection from $\infty$ of) the triangulation of  a horosphere
at $\infty$  induced by the canonical decomposition into ideal
tetrahedra, and the other,  $CW(\varphi)$, is a fractal tessellation
given by the Cannon-Thurs\-ton map of the fiber group switching back
and forth between gray and white each time it passes through
$\infty$. In this paper, we study the relation between
$\Delta(\varphi)$ and $CW(\varphi)$.
\end{abstract}

\maketitle

\begin{flushright}
\textit{Dedicated\, to\, Prof.\ \,Akio \,Kawauchi\linebreak
on the occasion of his 60th birthday}
\end{flushright}

\section{Introduction}
\label{sec.introduction}
Let $\varphi$ be a pseudo-Anosov homeomorphism
of the once-punctured torus $T:=(\RR^2-\ZZ^2)/\ZZ^2$
and let $\Tbundle:=T\times\RR/(x,t)\sim (\varphi(x),t+1)$
be the bundle over the circle with fiber $T$
and monodromy $\varphi$.
By Thurston's uniformization theorem for surface bundles
(\cite{Thurston, Otal}),
$\Tbundle$ admits a complete hyperbolic structure of finite volume.
Since $\Tbundle$ has a single torus cusp,
$\Tbundle$ admits the
\textit{canonical decomposition}
into ideal tetrahedra
which is dual to the Ford domain (\cite{Epstein-Penner, Weeks}).
The complete hyperbolic structure and the canonical
decomposition of $\Tbundle$
were constructed by
J\o rgensen
in his famous unfinished work
\cite{Jorgensen}, and rigorous
treatments
of (part of) his results were given
in
\cite{Akiyoshi, ASWY, Gueritaud1, Gueritaud2,Lackenby,
Parker}.

The canonical decomposition induces a triangulation of
any peripheral torus,
which in turn lifts to a triangulation, $\Delta(\varphi)$,
of the universal covering of the
peripheral torus.
We may assume that the ideal point $\infty$ of the upper-half-space
model
$\HH^3=\CC\times\RR_{+}$ of
hyperbolic space is a parabolic fixed point of the Kleinian group
$\Gamma\cong\pi_1(\Tbundle)$ uniformizing $\Tbundle$.
Then we may regard $\Delta(\varphi)$   as a triangulation of
a horosphere at $\infty$, and then,
by projection from $\infty$,
as a triangulation of the complex
plane $\CC$ which is invariant by the stabilizer
$\Gamma_{\infty}\cong\ZZ^2$ of $\infty$ in $\Gamma$.

On the other hand, by the first author's joint work \cite{Cannon-Dicks2}
with J. W. Cannon,
the complex plane admits
a rather different
$\Gamma_{\infty}$-invariant
(fractal) tessellation, $CW(\varphi)$,
which naturally arises from the Cannon-Thurston map
associated to $\Tbundle$.
This is intimately related with
the fractal domain in
the first author's previous joint work
with R.C. Alperin and J. Porti \cite{Alperin-Dicks-Porti},
which plays a key role in their construction of
the Cannon-Thurston map
associated to the Gieseking manifold,
a quotient of
the simplest hyperbolic punctured-torus bundle.

Since both $\Delta(\varphi)$ and $CW(\varphi)$
are $\Gamma_{\infty}$-invariant tessellations
of the complex plane which naturally
arise from the punctured-torus bundle $\Tbundle$,
it is reasonable to expect some nice relation between them.
The purpose of this paper
is to show that this is actually the case
(see Figure \ref{fig.RLLRRRLLLL} and Theorem \ref{maintheorem}).
In fact, we show that
$\Delta(\varphi)$ and $CW(\varphi)$
share the same vertex set and that
the combinatorial structure of $CW(\varphi)$
can be
recovered from that of $\Delta(\varphi)$
and vice-versa.
To be more precise,
$\Delta(\varphi)$ is endowed with a structure of
a \lq\lq layered simplicial complex'',
which reflects
the bundle structure of $\Tbundle$
(Section \ref{sec.cusptriangulation}),
whereas $CW(\varphi)$ is endowed with a structure
of a
\lq\lq colored CW-complex''
(Section \ref{sec.fractal-tessellation}),
which reflects
the way that the Cannon-Thurston map
fills in the Riemann sphere.
We show in Theorem \ref{maintheorem} that
$\Delta(\varphi)$ with the layered structure
(combinatorially) determines
$CW(\varphi)$
with the colored structure,
and vice versa.
The theorem is proved by constructing a certain CW-decomposition of the
complex plane which serves as a common parent
of the two tessellations
(see Definition \ref{definition.parents-complex} and
Proposition \ref{prop.quotient-cellcomplex}).

\begin{figure}[p]
\begin{center}
\setlength{\unitlength}{1truecm}
\begin{picture}(12,17.3)
\put(0,0){\epsfig{file= 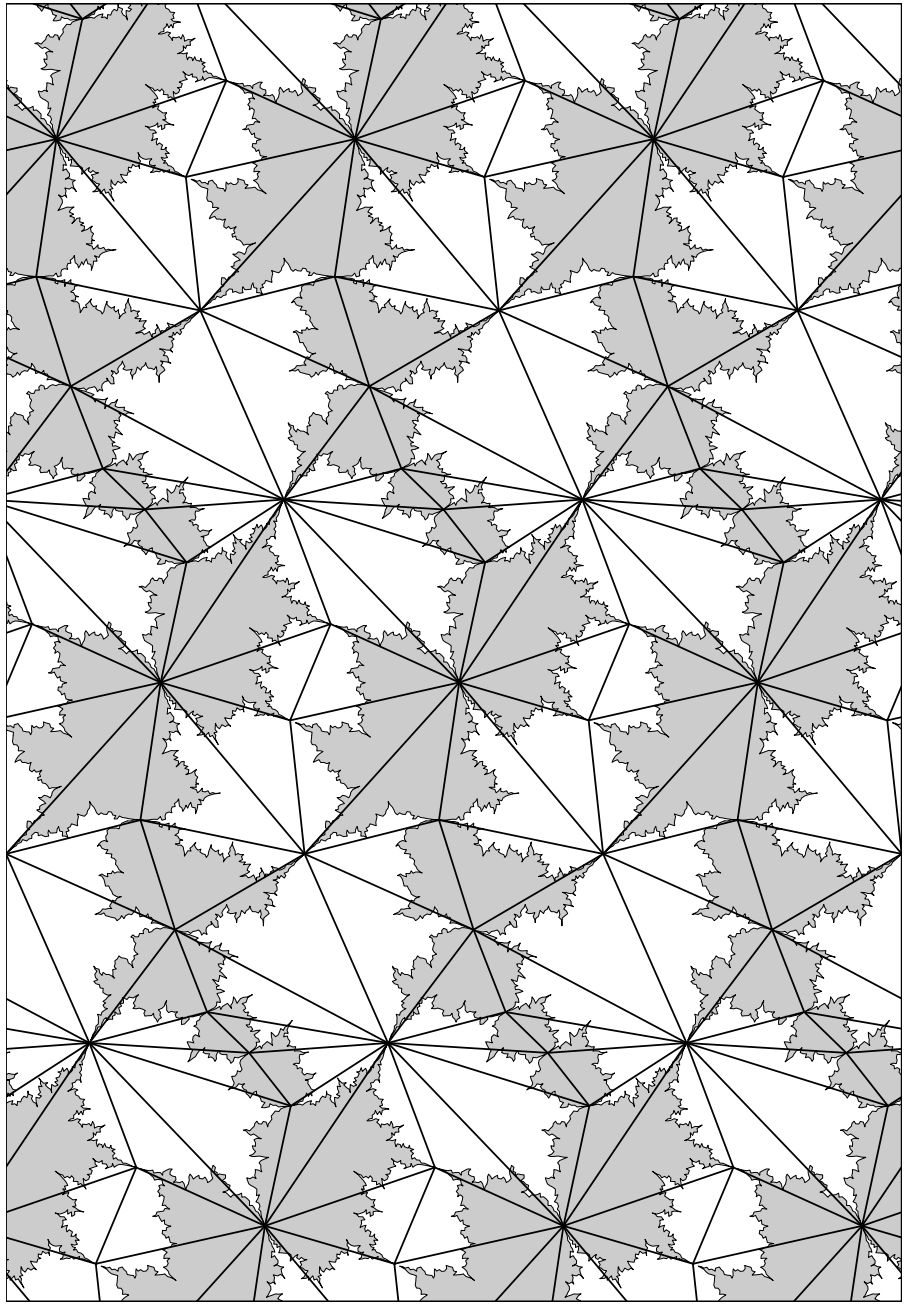,width=12truecm}}
\end{picture}
\caption{Projected-horosphere
triangulation \, $\Delta(\varphi)$
and fractal tessellation $CW(\varphi)$,
for $\varphi\!=\!RLLRRRLLLL.$\newline
The straight line segments etch  $\Delta(\varphi)$
while the fractal lines etch  $CW(\varphi)$.}\label{fig.RLLRRRLLLL}
\end{center}
\end{figure}

This paper is organized as follows.
In Section \ref{sec.orbifold},
we recall basic facts concerning the orbifold fundamental group
$\pi_1(\OO)$ of the $(2,2,2,\infty)$-orbifold,
$\OO$, obtained as the quotient of $T$
by the hyper-elliptic involution.
The contents in this section
give
the common language to describe the combinatorial structures
of $\Delta(\varphi)$ and $CW(\varphi)$.
In Section \ref{sec.farey},
we describe the normal form of the pseudo-Anosov map $\varphi$
and fix a convention (Convention \ref{convention.Farey}),
which we employ throughout the paper.
In Section \ref{sec.representation},
we describe the \lq\lq type-preserving''
$\PSL(2,\CC)$-representations of $\pi_1(\OO)$,
and fix notation
for the punctured-torus bundle $\Tbundle$
and its natural quotient $\Obundle$.
In Section \ref{sec.cusptriangulation},
we recall the combinatorial description
of the canonical decomposition of $\Tbundle$,
introduce the \lq\lq layered structure'' of $\Delta(\varphi)$
(Definition \ref{layered complex}),
and give a combinatorial description
of $\Delta(\varphi)$ in terms of the language
prepared in Section \ref{sec.orbifold}
(Theorem \ref{thm.canonical-decomposition} and
Proposition \ref{prop.model-Delta}).
In Section \ref{sec.Cannon-Thurston},
we recall the combinatorial description of the Cannon-Thurston map
associated to $\Tbundle$,
which was established by Bowditch \cite{Bowditch}
(Theorem \ref{thm.Bowditch}).
In Section \ref{sec.fractal-tessellation},
we recall the fractal tessellation $CW(\varphi)$
introduced in \cite{Cannon-Dicks2}, and, in Theorem \ref{Th.CDTessellation},
we give a combinatorial description of $CW(\varphi)$
in terms of the common language developed in Section \ref{sec.orbifold}.
Finally, in Section \ref{sec.statement},
we state the main theorem (Theorem \ref{maintheorem})
and give a proof of the theorem.

\section{The orbifold $\OO$ and its fundamental group}
\label{sec.orbifold}

The punctured torus $T=(\RR^2-\ZZ^2)/\ZZ^2$
admits the \textit{hyper-elliptic involution},
induced by the linear
automorphism
$x\mapsto -x$
of $\RR^2-\ZZ^2$.
The quotient of $T$ by the involution is the
$(2,2,2,\infty)$-orbifold $\OO$,
i.e., the orbifold with underlying space
a once-punctured sphere and with three cone points
of index $2$.
The orbifold fundamental group $\pi_1(\OO)$ is defined to be the
covering transformation group of the universal cover
$\tilde\OO$ of $\OO$.
Since $T$ is a 2-fold
(branched)
covering of $\OO$,
$\tilde\OO$ is identified with the universal cover
$\tilde T$ of $T$,
and $\pi_1(T)$ is a subgroup of $\pi_1(\OO)$ of index $2$.

The group $\pi_1(\OO)$ has the following presentation:
\begin{equation}
\pi_1(\OO) = \langle A,B,C
\; |\;
A^2=B^2=C^2=1\rangle.
\end{equation}
Set $D:=CBA$. Then $D$ (resp. $D^2$)
is a peripheral element of $\pi_1(\OO)$ (resp. $\pi_1(T)$),
namely it is represented by a simple loop around the puncture of
$\OO$ (resp. $T$).
We call $D$ the \textit{distinguished element}.

By picking a complete hyperbolic structure of $\OO$ (and hence
of $T$), we identify $\tilde\OO=\tilde T$ with (the upper-half-space
model of) the hyperbolic plane $\HH^2=\{z\in\CC\mid\Im(z)>0\}$, and
identify $\pi_1(\OO)$ with a Fuchsian group (see Figure
\ref{fig.fuchsian-group}).

\begin{figure}[t!]
\begin{center}
\setlength{\unitlength}{1truecm}
\begin{picture}(7.3, 3.6)
\put(0.2,0.2){\epsfig{file= 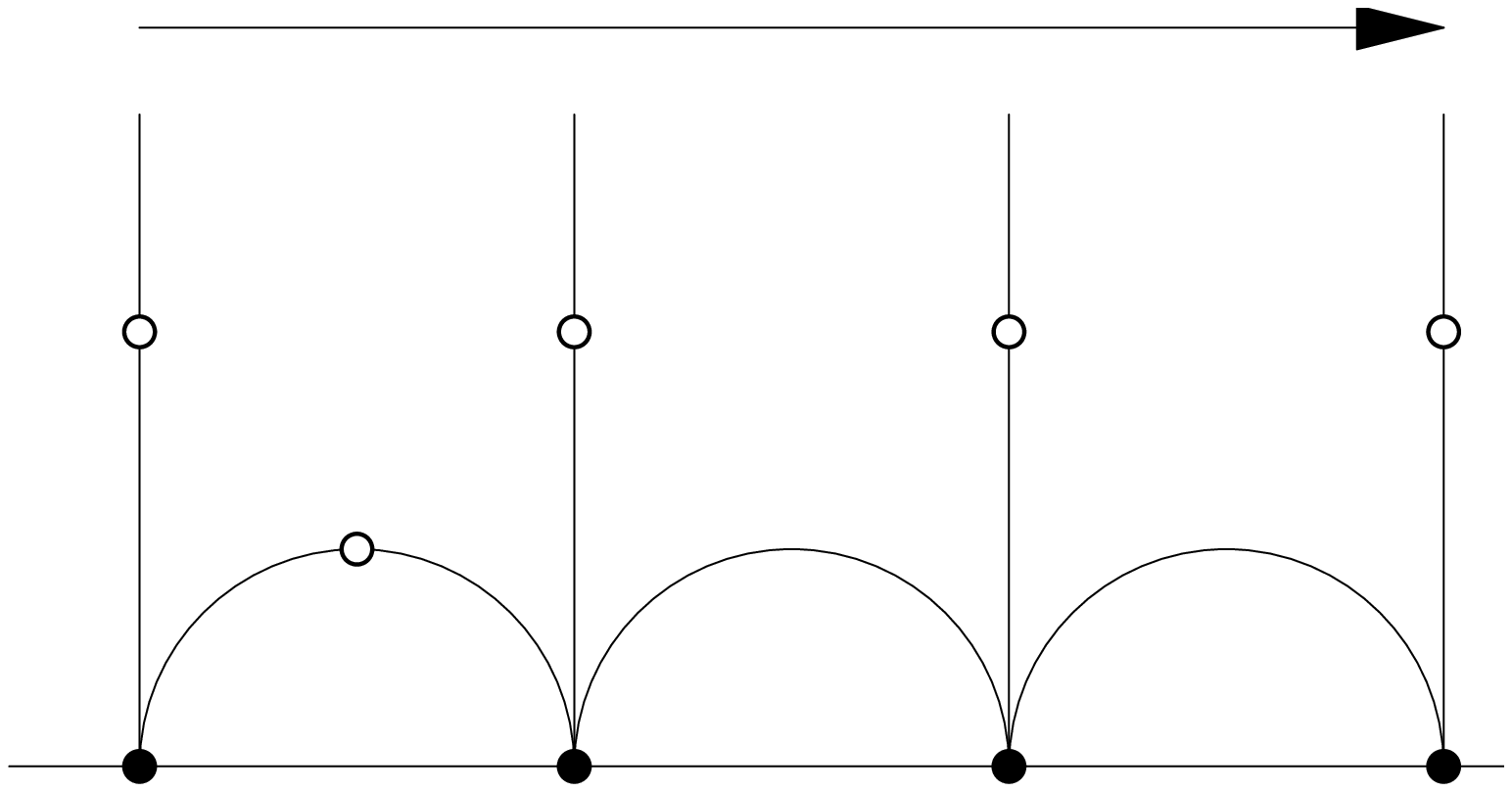, width = 7.3truecm}}
\put(.6,2.2){\makebox(0,0)[c]{$A$}}
\put(2.4,2.2){\makebox(0,0)[c]{$B$}}
\put(4.25,2.2){\makebox(0,0)[c]{$C$}}
\put(7.05,2.2){\makebox(0,0)[c]{$\inn{D}(A)$}}
\put(1.9,1){\makebox(0,0)[c]{$\inn{B}(C)$}}
\put(2.8,3.9){\makebox(0,0)[c]{$D$}}
\put(.9,0){\makebox(0,0)[c]{$A(\infty)$}}
\put(2.6,0){\makebox(0,0)[c]{$B(\infty)$}}
\put(4.6,0){\makebox(0,0)[c]{$C(\infty)$}}
\put(6.7,0){\makebox(0,0)[c]{$DA(\infty)$}}
\end{picture}
\caption{Fuchsian group $\langle A,B,C\rangle$}\label{fig.fuchsian-group}
\end{center}
\end{figure}

Then $D$ is identified with the following parabolic transformation
having the ideal point $\infty$ of $\HH^2$
as the parabolic fixed point.
\begin{equation}
\label{NormalizingD0}
D(z)=z+1.
\end{equation}
Then
the points $A(\infty)$, $B(\infty)$ and $C(\infty)$ lie on $\RR$
from left to right in this order.
After a coordinate change, we may assume that
the images of the three geodesics
joining $\infty$ with these three points, in the
universal abelian cover $\RR^2-\ZZ^2$ of $T$,
are open arcs of slopes $0$, $1$ and $\infty$,
joining the puncture $(0,0)$ with $(1,0)$, $(1,1)$ and $(0,1)$,
respectively.
Thus the images of these three geodesics in $T$
are mutually disjoint arcs properly embedded in $T$,
which divide $T$ into two ideal triangles, and thus
they determine an ideal triangulation of $T$.

We now recall the well-known correspondence between the ideal
triangulations of $T$ and the \textit{Farey triangles}. The
\textit{Farey tessellation} is the tessellation of the hyperbolic
plane $\HH^2$ obtained from the ideal triangle $\langle
0,1,\infty\rangle$ by successive reflection in its edges. The vertex
set of the Farey tessellation is equal to $\QQQ:=\QQ\cup\{1/0\}
\subset \partial \HH^2$ and each vertex $r$ determines a properly
embedded arc $\beta_r$ in $T$ of \textit{slope} $r$, i.e., the arc
in $T$ obtained as the image of the straight arc of slope $r$ in
$\RR^2-\ZZ^2$ joining punctures. If $\sigma=\langle
r_0,r_1,r_2\rangle$ is a Farey triangle, i.e., a triangle in the
Farey tessellation, then the arcs $\beta_{r_0}$, $\beta_{r_1}$ and
$\beta_{r_2}$ are mutually disjoint and they determine an ideal
triangulation, $\trg(\sigma)$, of $T$. In the following we assume
that the orientation of $\sigma=\langle r_0,r_1,r_2\rangle$ is
coherent with the orientation of the Farey triangle  $\langle
0,1,\infty\rangle$, where the orientation is determined by the order
of the vertices. Then the oriented simple loop in $T$ around the
puncture representing $D^2$ meets the edges of $\trg(\sigma)$ of
slopes $r_0,r_1,r_2$ in this cyclic order, for every Farey triangle
$\sigma=\langle r_0,r_1,r_2\rangle$.

By using the above notation,
the generators $A$, $B$ and $C$ are described as follows.
Consider the ideal triangulation $\trg(\sigma)$ of $T$
determined by the Farey triangle $\sigma=\langle 0,1,\infty\rangle$.
It lifts to a $\pi_1(\OO)$-invariant tessellation of
the universal cover $\tilde T=\HH^2$.
Let $\{e_j\}_{j\in\ZZ}$ be the edges of the tessellation emanating from
the ideal vertex $\infty$, lying in $\HH^2$ from left to right in this order.
For each $e_j$,
there is a unique order $2$ element, $P_j\in \pi_1(\OO)$
which inverts $e_j$.
We may assume after a shift of indices that
$e_{3j}$, $e_{3j+1}$ and $e_{3j+2}$
project to the arcs in $T$ of slopes $0$, $1$ and $\infty$,
respectively, for every $j\in\ZZ$.
Then any triple of consecutive elements
$(P_{3j}, P_{3j+1}, P_{3j+2})$ serves as $(A,B,C)$.
Throughout this paper, $(A,B,C)$ represents the triple of
specific elements of $\pi_1(\OO)$ obtained in this way.
We call $\{P_j\}_{j\in\ZZ}$
the \textit{sequence of elliptic generators}
associated with the Farey triangle $\sigma$.

The above construction works for every Farey triangle
$\sigma=\langle r_0,r_1,r_2\rangle$,
and the sequence of elliptic generators associated with it is defined.
(Here we use the assumption that the orientation of
$\langle r_0,r_1,r_2\rangle$ is coherent with the orientation of
$\langle 0,1,\infty\rangle$.)
Any triple of three consecutive elements
in a sequence of elliptic generators is called an
\textit{elliptic generator triple}.
A member, $P$, of an elliptic generator triple is called
an \textit{elliptic generator},
and its \textit{slope} $s(P)\in\QQQ$ is defined
to be the slope of the arc in $T$ obtained as the image
of the geodesic $\langle \infty, P(\infty)\rangle$.
(Here it should be noted that $\infty$ is the parabolic fixed point
of the distinguished element $D$.)
For example, we have
\begin{equation}
(s(A),s(B),s(C))=(0,1,\infty).
\end{equation}
When we say that $\{P_j\}_{j\in\ZZ}$
is the sequence of elliptic generators
associated with a Farey triangle
$\sigma=\langle r_0,r_1,r_2\rangle$,
we always assume that
\[
(s(P_{3m}),s(P_{3m+1}),s(P_{3m+2}))=(r_0,r_1,r_2).
\]
Thus the index $j$ is well defined modulo a shift by a multiple of
$3$. We summarize the properties of elliptic generators (cf.
\cite[Section 2.1]{ASWY}).
We shall use the following non-standard notation.
\begin{align}
\label{eq:conjugation}
&\text{For elements $X$, $Y$ of a group $G$,  $\inn{X}(Y)$ denotes $XYX^{\scriptscriptstyle -1}$.}
\end{align}
We view $\inn{X}$ as an element of the automorphism group of $G$.

\begin{proposition}
{\normalfont(1)}  Let $\{P_j\}_{j\in\ZZ}$
be
the sequence of elliptic generators
associated with a Farey triangle $\sigma$.
Then the following hold for every $j\in\ZZ$.
\begin{enumerate}[\normalfont (i)]
\item
$
\pi_1(\OO) \cong \langle P_j, P_{j+1}, P_{j+2}
\; |\;
P_j^2=P_{j+1}^2=P_{j+2}^2=1\rangle.
$
\item
$P_{j+2}P_{j+1}P_j$ is equal to the distinguished element $D$
of $\pi_1(\OO)$.
\item
With the notation of~\eqref{eq:conjugation}, $P_{j+3m}= \inn{D^m}(P_j)$
for every $m \in \ZZ$.
\item
$\langle s(P_j), s(P_{j+1}), s(P_{j+2})\rangle$
is a Farey triangle and its orientation is coherent with
$\langle 0,1,\infty\rangle$.
\end{enumerate}

{\normalfont(2)} Let $P$ and $P'$ be elliptic generators of the same slope.
Then $P'= \inn{D^m}(P)$
for some $m\in\ZZ$.
Let $\sigma=\langle r_0,r_1,r_2 \rangle$ and
$\sigma'=\langle r_0',r_1',r_2' \rangle$ be Farey triangles
sharing the edge
$\langle r_0,r_1\rangle = \langle r_0',r_2' \rangle$,
and let $\{P_j\}$ and $\{P_j'\}$, respectively,
be the sequences of elliptic generators associated with
$\sigma$ and $\sigma'$.
Then the following identity holds after
a shift of indices by a multiple of $3$
(see Figure \ref{figure.EGS}).
\[
(P'_{3j},P'_{3j+1},P'_{3j+1})
=
(P_{3j},\inn{P_{3j+1}}(P_{3j+2}),P_{3j+1}).
\]
\end{proposition}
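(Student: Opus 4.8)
The plan is to carry out everything in the Fuchsian model of Figure~\ref{fig.fuchsian-group}: $\pi_1(\OO)$ acts on $\HH^2$ preserving the lift $\mathcal{T}$ of the ideal triangulation $\trg(\sigma)$ of $T$, the point $\infty$ is the parabolic fixed point of $D$ with $D(z)=z+1$, its stabilizer in $\pi_1(\OO)$ is the infinite cyclic group $\langle D\rangle$, and the edges of $\mathcal{T}$ emanating from $\infty$ are $\{e_j\}_{j\in\ZZ}$ in left-to-right order. I would first isolate two facts. \emph{(a)} $P_j$ is the $\pi$-rotation of $\pi_1(\OO)$ about the midpoint of $e_j$; it interchanges the two endpoints of $e_j$, so $P_j(\infty)$ is the finite endpoint of $e_j$, and $P_j$ is the only order-$2$ element of $\pi_1(\OO)$ sending $\infty$ to that endpoint. \emph{(b)} $D$ carries $e_j$ to $e_{j+3}$. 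Fact~\emph{(b)} is the only genuinely geometric input: $D(z)=z+1$ preserves $\mathcal{T}$ and the cyclic order of the $e_j$, so $D(e_j)=e_{j+k}$ for a fixed $k$, and $k=3$ because the horocyclic loop around the cusp of $T$ — which represents $D^2$ — crosses exactly the six edge-ends of the three edges of $\trg(\sigma)$, so $D^2$ shifts indices by $6$. (The same count, with $\langle 0,1,\infty\rangle$ replaced by $\sigma$, gives \emph{(a)}, \emph{(b)} for an arbitrary Farey triangle.)

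From \emph{(a)}--\emph{(b)}, part~(1) follows by bootstrapping. Statement~(iii) is immediate: $\inn{D}(P_j)$ is an order-$2$ element inverting $D(e_j)=e_{j+3}$, hence equals $P_{j+3}$ by the uniqueness in~\emph{(a)}, and iterating gives $P_{j+3m}=\inn{D^m}(P_j)$. For~(ii) I would verify the base case $P_2P_1P_0=CBA=D$ directly from the identification $(A,B,C)=(P_0,P_1,P_2)$, then propagate: granting $P_{j+2}P_{j+1}P_j=D$, substitute this into $P_{j+3}=\inn{D}(P_j)$ and cancel using $P_j^2=1$ to obtain $P_{j+3}=P_{j+2}P_{j+1}P_jP_{j+1}P_{j+2}$, whence $P_{j+3}P_{j+2}P_{j+1}=D$; the downward step is symmetric. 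For~(i): the three elements $P_j,P_{j+1},P_{j+2}$ satisfy the relations $P_\bullet^2=1$ and generate $\pi_1(\OO)$ (using~(ii) and~(iii) to express each of $A,B,C$ in terms of the triple and conversely, or using the flip analysis below); since $\pi_1(\OO)\cong\ZZ/2*\ZZ/2*\ZZ/2$ is residually finite, hence Hopfian, the induced surjection from the abstract free product onto $\pi_1(\OO)$ is an isomorphism, giving the presentation. For~(iv): the nontrivial coset of $\pi_1(T)$ in $\pi_1(\OO)$ is realized by the hyper-elliptic involution, which preserves the slope of every arc, and $D\notin\pi_1(T)$, so conjugation by $D$ preserves slopes; thus $(s(P_{3m}),s(P_{3m+1}),s(P_{3m+2}))=(0,1,\infty)$ and every consecutive triple of slopes is a cyclic rotation of $(0,1,\infty)$, i.e.\ a coherently oriented Farey triangle. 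For a general starting triangle $\sigma$ one either repeats these arguments with $\sigma$ in place of $\langle 0,1,\infty\rangle$, or transports the base case along the Farey graph using part~(2).

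For part~(2), the first assertion is cleanest. If $P,P'$ are elliptic generators of slope $r$, then by~\emph{(a)} we have $P(\infty)=v$ and $P'(\infty)=v'$ where $[\infty,v]$ and $[\infty,v']$ are edges (of the relevant lifted triangulations) projecting to the single arc $\beta_r\subset T$. The edges emanating from $\infty$ that project to $\beta_r$ form one orbit under $\mathrm{Stab}_{\pi_1(\OO)}(\infty)=\langle D\rangle$, so $v'=D^mv$ for some $m$; then $\inn{D^{-m}}(P')$ and $P$ are both order-$2$ elements of $\pi_1(\OO)$ sending $\infty$ to $v$, hence equal by~\emph{(a)}, i.e.\ $P'=\inn{D^m}(P)$. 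For the flip identity I would compare $\mathcal{T}$ and $\mathcal{T}'$: they differ by the diagonal exchange in the quadrilateral spanned by the vertices of the common edge $\langle r_0,r_1\rangle=\langle r_0',r_2'\rangle$, so they share the edges of slopes $r_0$ and $r_1$ but replace those of slope $r_2$ by the new diagonal (slope $r_2'$). Reading off the edges at $\infty$ in left-to-right order and making the allowed shift of indices, one gets $e_{3j}'=e_{3j}$, the new diagonal $e_{3j+1}'$ lying between $e_{3j+1}$ and $e_{3j+2}$, and $e_{3j+2}'=e_{3j+1}$; applying~\emph{(a)} and the already-proved~(iii) to compute the $\pi$-rotation about the new diagonal yields $P'_{3j}=P_{3j}$, $P'_{3j+1}=\inn{P_{3j+1}}(P_{3j+2})$, $P'_{3j+2}=P_{3j+1}$, which is the stated identity (the repeated index on the left-hand side of the statement being a misprint for $P'_{3j+2}$).

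The step I expect to be the real obstacle is the geometric bookkeeping in the flip identity of part~(2): one must pin down precisely where the new diagonal sits in the cyclic order of edges at $\infty$, check that the required shift is by a multiple of $3$, and verify that the $\pi$-rotation about its midpoint is exactly $\inn{P_{3j+1}}(P_{3j+2})$ rather than some other conjugate — the kind of \emph{draw-the-picture-carefully} computation where the orientation conventions (coherence with $\langle 0,1,\infty\rangle$ and the ordering convention behind $D=CBA$) must be tracked consistently. Everything else reduces to a one-line group identity, the Hopfian observation, or the single cusp-crossing count behind fact~\emph{(b)}.
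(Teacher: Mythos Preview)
Your proposal is correct. Note, however, that the paper does not actually prove this proposition: it simply records it with the parenthetical reference ``cf.\ \cite[Section~2.1]{ASWY}'' and moves on. So there is no ``paper's own proof'' to compare against; what you have written is a perfectly good self-contained argument of the sort one finds in the cited source.

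One remark that removes the obstacle you flag at the end. In part~(2), once you have established $P'_{3j}=P_{3j}$ and $P'_{3j+2}=P_{3j+1}$ from the identification of edges, you do not need any further geometric bookkeeping to pin down $P'_{3j+1}$: apply statement~(ii) for the triangle $\sigma'$ to get
\[
P'_{3j+2}\,P'_{3j+1}\,P'_{3j}=D=P_{3j+2}\,P_{3j+1}\,P_{3j},
\]
substitute the two known identities, and cancel (using $P_{3j+1}^2=1$) to obtain $P'_{3j+1}=P_{3j+1}P_{3j+2}P_{3j+1}=\inn{P_{3j+1}}(P_{3j+2})$ purely algebraically. So the ``draw-the-picture-carefully'' computation you anticipate is unnecessary; the only geometric input is matching the shared edges, which is straightforward. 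Your observation that the repeated $P'_{3j+1}$ on the left-hand side is a misprint for $P'_{3j+2}$ is also correct.
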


\begin{figure}[t!]
\begin{center}
\setlength{\unitlength}{1truemm}
\begin{picture}(100,35)(0,-5)
\put(0,20){\makebox(0,0)[c]{$P_{0}'$}}
\put(0,15){\makebox(0,0)[c]{$\scriptstyle\|$}}
\put(0,10){\makebox(0,0)[c]{$P_{0}$}}
\put(10,0){\makebox(0,0)[c]{$P_{1}$}}
\put(17,20){\makebox(0,0)[c]{$P_{1}'$}}
\put(17,15){\makebox(0,0)[c]{$\scriptstyle\|$}}
\put(17,10){\makebox(0,0)[c]{$P_{2}$}}
\put(27,30){\makebox(0,0)[c]{$P_{2}'$}}
\put(38,20){\makebox(0,0)[c]{$P_{3}'$}}
\put(38,15){\makebox(0,0)[c]{$\scriptstyle\|$}}
\put(38,10){\makebox(0,0)[c]{$P_{3}$}}
\put(48,0){\makebox(0,0)[c]{$P_{4}$}}
\put(56,20){\makebox(0,0)[c]{$P_{4}'$}}
\put(56,15){\makebox(0,0)[c]{$\scriptstyle\|$}}
\put(56,10){\makebox(0,0)[c]{$P_{5}$}}
\put(75,20){\makebox(0,0)[c]{$s_{0}'$}}
\put(75,15){\makebox(0,0)[c]{$\scriptstyle\|$}}
\put(75,10){\makebox(0,0)[c]{$s_{0}$}}
\put(97,20){\makebox(0,0)[c]{$s_{1}'$}}
\put(97,15){\makebox(0,0)[c]{$\scriptstyle\|$}}
\put(97,10){\makebox(0,0)[c]{$s_{2}$}}
\put(10,7){\makebox(0,0)[c]{$\circlearrowleft$}}
\put(28,25){\makebox(0,0)[c]{$\circlearrowright$}}
\put(48,7){\makebox(0,0)[c]{$\circlearrowleft$}}
\put(85.5,20){\makebox(0,0)[c]{$\circlearrowleft$}}
\put(85.5,10){\makebox(0,0)[c]{$\circlearrowleft$}}
\put(86,32){\makebox(0,0)[c]{$s'_{2}$}}
\put(86,-2){\makebox(0,0)[c]{$s_{1}$}}
\put(77,0){\epsfig{file= 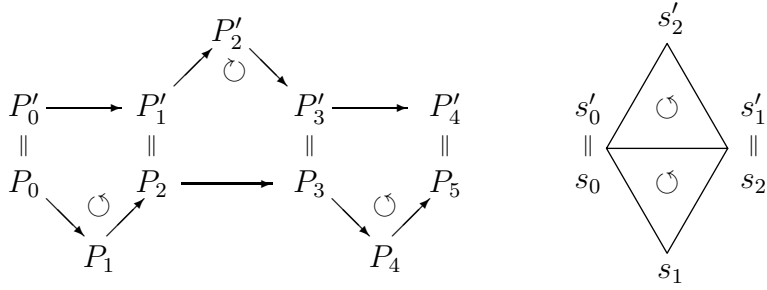, width = 1.7truecm}}
\thinlines
\put(3,20){\vector(1,0){10}}
\put(3,8){\vector(1,-1){5}}
\put(11,3){\vector(1,1){5}}
\put(20,23){\vector(1,1){5}}
\put(30,28){\vector(1,-1){5}}
\put(21,10){\vector(1,0){12}}
\put(41,20){\vector(1,0){10}}
\put(41,8){\vector(1,-1){5}}
\put(49,3){\vector(1,1){5}}
\end{picture}
\caption{Adjacent sequences of elliptic generators.
The symbol $\circlearrowleft$, resp. $\circlearrowright$, indicates
a triangle in which coherent reading of the vertices is  counter-clockwise, resp. clockwise. } \label{figure.EGS}
\end{center}
\end{figure}

The last assertion of the above proposition motivates us to
define
the \textit{right automorphism} $R$ and
the \textit{left automorphism} $L$ of $\pi_1(\OO)$
by the following rule:
\begin{equation}
\label{DefiningRL}
R:(A,B,C)\mapsto (A, \inn{B}(C),B),
\quad
L:(A,B,C)\mapsto (B, \inn{B}(A),C),
\end{equation}

The proof of the following lemma is straightforward.

\newpage

\begin{lemma}
\label{lemma.RL}
Let $\sigma_0=\langle 0,\infty,-1\rangle$ and
$\sigma_1=\langle 0,1,\infty\rangle$.
Then the following hold.
\begin{enumerate}[\normalfont (1)]
\item   $(A,B,C)$ is an elliptic generator triple
associated with $\sigma_1$.
Moreover the sequence of elliptic generators
associated with $\sigma_1$ is as follows:
\[
\cdots, \,
\inn{D^{\scriptscriptstyle -1}}(A), \, \inn{D^{\scriptscriptstyle-1}}(B), \, \inn{D^{\scriptscriptstyle-1}} (C), \,
A, \, B, \, C, \,
\inn{D}(A), \,  \inn{D}(B), \,  \inn{D}(C), \, \cdots.
\]
\item
$(A,C,\inn{C}(B))$
is an elliptic generator triple associated with $\sigma_0$.
\linebreak
Moreover the sequence of elliptic generators
associated with $\sigma_0$ is as follows:
\[
\cdots, \,
\inn{D^{\scriptscriptstyle -1}}(A), \, \inn{D^{\scriptscriptstyle -1}}(B),  \,
\inn{D^{\scriptscriptstyle -1}}\inn{C}(B) = \inn{A}(B), \,
A,  \, C, \,  \inn{C}(B), \,
\inn{D}(A),    \, \cdots.
\]
\item Both $R$ and $L$
map
the sequence of elliptic generators
associated with $\sigma_0$ to that associated with $\sigma_1$.
In fact, we have:
\[
R(A,C, \inn{C}(B))=(A,B,C), \quad
L(A,C, \inn{C}(B))=(B,C, \inn{D}(A)).
\]
Moreover, they differ only by post composition of a shift of indices
of elliptic generators associated with $\sigma_1$. To be precise,
$LR^{-1}(P_j)=P_{j+1}$, where $\{P_j\}_{j\in\ZZ}$ is the sequence of
elliptic generators associated with $\sigma_1$.
In particular, $(LR^{-1})^3 = \inn{D}$.
\end{enumerate}
\end{lemma}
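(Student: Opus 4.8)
The plan is to deduce all three parts from the preceding Proposition, doing the bookkeeping inside $\pi_1(\OO)=\langle A,B,C\mid A^2=B^2=C^2=1\rangle$ with the relation $D=CBA$ and the fact that $A$, $B$, $C$ are involutions. Part~(1) is the definition unwound: by its construction in Section~\ref{sec.orbifold}, $(A,B,C)$ is, after the permitted shift of indices, the triple $(P_0,P_1,P_2)$ of the sequence $\{P_j\}_{j\in\ZZ}$ of elliptic generators associated with $\sigma_1=\langle 0,1,\infty\rangle$, normalized by $(s(P_0),s(P_1),s(P_2))=(0,1,\infty)$; then part~(1)(iii) of the Proposition, $P_{j+3m}=\inn{D^m}(P_j)$, immediately supplies the remaining terms ($P_{-3}=\inn{D^{-1}}(A)$, $P_{-2}=\inn{D^{-1}}(B)$, $P_{-1}=\inn{D^{-1}}(C)$, $P_3=\inn{D}(A)$, $P_4=\inn{D}(B)$, $P_5=\inn{D}(C)$, and so on), which is the displayed sequence.

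For Part~(2), the key point is that $\sigma_0=\langle 0,\infty,-1\rangle$ and $\sigma_1=\langle 0,1,\infty\rangle$ share the edge joining $0$ and $\infty$. Rewriting the two triangles cyclically as $\sigma_1=\langle\infty,0,1\rangle$ and $\sigma_0=\langle\infty,-1,0\rangle$ puts them into the configuration needed for part~(2) of the Proposition, applied with $\sigma=\sigma_1$, $\sigma'=\sigma_0$ and shared edge $\langle\infty,0\rangle$. By Part~(1) the elliptic generator triple of $\sigma_1=\langle\infty,0,1\rangle$ is $(C,\inn{D}(A),\inn{D}(B))$, so the Proposition returns the elliptic generator triple of $\sigma_0$ as the triple with outer entries $C$ and $\inn{D}(A)$ and middle entry the conjugate of $\inn{D}(B)$ by $\inn{D}(A)$; using $DA=CB$ the middle entry simplifies to $\inn{D}(\inn{A}(B))=\inn{CB}(B)=\inn{C}(B)$. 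Thus $(C,\inn{C}(B),\inn{D}(A))$ is the elliptic generator triple of $\sigma_0$ of slopes $(\infty,-1,0)$; reading off the term immediately before $C$ via part~(1)(iii) of the Proposition then gives $(A,C,\inn{C}(B))$ as the triple of slopes $(0,\infty,-1)$, and the whole associated sequence as its $\inn{D}$-orbit, the identity $\inn{D^{-1}}\inn{C}(B)=\inn{A}(B)$ (a one-line check using $D^{-1}C=AB$) putting it in the stated shape.

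For Part~(3), I would apply $R$ and $L$ of~\eqref{DefiningRL} directly to $(A,C,\inn{C}(B))$: one gets $R(A)=A$, $R(C)=B$, $R(\inn{C}(B))=C$ and $L(A)=B$, $L(C)=C$, $L(\inn{C}(B))=\inn{CB}(A)=\inn{D}(A)$ (using $CB=DA$), which are the two displayed images. A short computation from $D=CBA$ gives $R(D)=L(D)=D$, so $R$ and $L$ both commute with $\inn{D}$; combining this with Parts~(1) and~(2) shows $R$ carries the $\sigma_0$-sequence onto the $\sigma_1$-sequence term-by-term with unchanged index while $L$ carries it onto the $\sigma_1$-sequence with index increased by one, i.e. $R(P'_j)=P_j$ and $L(P'_j)=P_{j+1}$. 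Hence $LR^{-1}(P_j)=P_{j+1}$, and then $(LR^{-1})^3(P_j)=P_{j+3}=\inn{D}(P_j)$ by part~(1)(iii) of the Proposition; since the $P_j$ generate $\pi_1(\OO)$, this forces $(LR^{-1})^3=\inn{D}$.

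I do not expect a genuine obstacle here — everything reduces to elementary identities in $\pi_1(\OO)$. The one delicate point is the bookkeeping in Part~(2): one must choose the cyclic representatives of $\sigma_0$ and $\sigma_1$ so that part~(2) of the Proposition applies verbatim, and then track the ensuing shift of indices by a multiple of $3$ — which is where a positional slip would most easily enter.
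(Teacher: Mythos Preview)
Your proposal is correct. The paper itself omits the proof entirely, stating only that it is straightforward, so your detailed verification via the preceding Proposition and the involution identities in $\pi_1(\OO)$ is exactly the intended (and only natural) route; there is nothing to compare.
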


Since $R$ and $L$ preserve the distinguished element $D$,
they map elliptic generators to elliptic generators.
Moreover, if $P$ and $P'$ are elliptic generators of the same slope,
then $R(P)$ and $R(P')$
(resp. $L(P)$ and $L(P')$) have the same slope.
Thus $R$ and $L$
act
on the set $\QQQ$
of
slopes
of the elliptic generators.
The action $R_*$ (resp.  $L_*$)
of $R$ (resp. $L$) on $\QQQ$
induces an automorphism of the Farey tessellation which acts as a
one-unit shift on the bi-infinite sequence of triangles incident on
the vertex $0$ (resp. $\infty$), and this shift can be thought of as
rotation to the right (resp. left). (See Figure \ref{fig.RLaction}.)

\begin{figure}[t!]
\begin{center}
\setlength{\unitlength}{1truecm}
\begin{picture}(6,6)
\put(0,0){\epsfig{file= 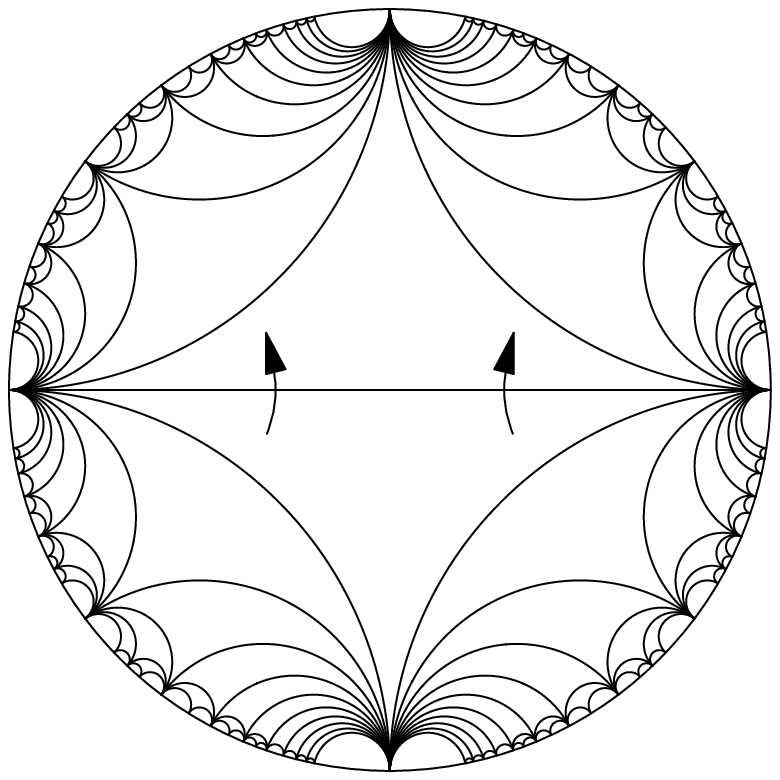, width =6truecm}}
\put(-0.2,3){\makebox(0,0)[c]{$\infty$}}
\put(6.2,3){\makebox(0,0)[c]{$0$}}
\put(3,-.2){\makebox(0,0)[c]{$-1$}}
\put(3,6.2){\makebox(0,0)[c]{$1$}}
\put(2.5,3.3){\makebox(0,0)[c]{$L$}}
\put(3.5,3.3){\makebox(0,0)[c]{$R$}}
\end{picture}
\caption{The action of $R$ and $L$ on the Farey tessellation}\label{fig.RLaction}
\end{center}
\end{figure}

\section{$SL(2,\ZZ)$ and
the
Farey tessellation}
\label{sec.farey}
Recall that the mapping-class group of
the once-punctured torus $T=(\RR^2-\ZZ^2)/\ZZ^2$,
is identified with $SL(2,\ZZ)$.
Thus we may assume the pseudo-Anosov
homeomorphism $\varphi$ is a \lq linear' homeomorphism
determined by a matrix
$\begin{pmatrix}
a&b\\c&d\end{pmatrix}
\in SL(2,\ZZ)$
with $|a+d|>2$.

The homeomorphism $\varphi$ descends to an automorphism
of the orbifold $\OO$, denoted by the same symbol,
and its action $\varphi_*$
on the set of slopes of the (elliptic) generators
is given by the rule
\begin{equation}
\label{linear-fractional-transformation}
s\mapsto
\frac{c+ds}{a+bs}.
\end{equation}
We note that the right and left automorphisms $R$ and $L$
of $\pi_1(\OO)$ defined by (\ref{DefiningRL})
are induced by the automorphisms of $\OO$
corresponding to the following matrices,
which we represent by the same symbols:
\begin{equation}
\label{MatricesRL}
R=\begin{pmatrix}
         1&1\\
         0&1
        \end{pmatrix},
\quad
L=\begin{pmatrix}
         1&0\\
         1&1
        \end{pmatrix}.
\end{equation}

The rule (\ref{linear-fractional-transformation})
determines the isometric
action $\varphi_*$ on $\HH^2$
preserving the Farey tessellation.
Since  $|a+d|>2$,
$\varphi_*:\HH^2\to\HH^2$ is a hyperbolic translation,
and it has a unique attractive (resp. repulsive)
fixed point $\mu_+$ (resp. $\mu_-$)
on the ideal boundary $\RRR=\RR\cup\{\infty\}$.
Since $\mu_{\pm}$ are irrationals,
the oriented geodesic $\ell$ in $\HH^2$
running from $\mu_-$ to $\mu_+$
crosses infinitely many Farey triangles
$\cdots, \sigma_{-1},\sigma_{0},\sigma_{1},\sigma_{2},\cdots$.
This determines a bi-infinite word $\Omega=\prod f_n$
in the letters $\{R,L\}$
by the rule that $f_n$ is $R$ (resp. $L$)
if $\ell$ exits the Farey triangle $\sigma_n$
to the right (resp. left)
of where it enters.
Since $\varphi_*$ preserves the Farey tessellation,
there is a
unique (positive) integer $p$
such that
\begin{equation}
\label{Period}
\varphi_*(\sigma_n)=\sigma_{n+p}.
\end{equation}
Then $\Omega=(\prod_{n=1}^{p} f_n)^{\infty}$.

After conjugation, we may assume that
$\sigma_0=\langle 0,\infty,-1\rangle$ and
$\sigma_1=\langle 0,1,\infty\rangle$.
Then it follows
that $\varphi_*$
is equal to
$\prod_{n=1}^{p}(f_n)_*$
as isometries of $\HH^2$ preserving the Farey tessellation,
where $(f_n)_*$ is the isometry $R_*$ or $L_*$
induced by the matrix $R$ or $L$ in (\ref{MatricesRL})
according as the symbol $f_n$ is $R$ or $L$.
Thus it follows that $\varphi$ is equal to $\pm\prod_{n=1}^{p}
f_n$ as an element of $SL(2,\ZZ)$.
Since the word $\Omega$ contains both $R$ and $L$,
we may assume
$f_1=R$ and
$f_{0}=f_{p}=L$
after a shift of indices.
This implies the well-known fact that
$\varphi$ is conjugate in $SL(2,\ZZ)$ to
\begin{equation}
\label{RLdecomposition}
\pm R^{a_1}L^{b_1}R^{a_2}L^{b_2}\cdots R^{a_k}L^{b_k}
\end{equation}
for some $k\ge 1$ and positive integers $a_i$ and $b_i$,
where $p=\sum_{i=1}^k(a_i+b_i)$.
This word, up to cyclic permutation,
is uniquely determined by
the conjugacy class of $\varphi$.
We summarize our convention.

\begin{convention}
\label{convention.Farey}
For the pseudo-Anosov homeomorphism $\varphi$ of $T$,
$\{\sigma_n\}_{n\in\ZZ}$
denotes the bi-infinite sequence
of Farey triangles and
$\{f_n\}_{n\in\ZZ}$ denotes the bi-infinite sequence
of the letters $R$ and $L$ defined in the above.
We assume the following conditions are satisfied.
\rm
\begin{enumerate}
\item
$\sigma_0=\langle 0,\infty,-1\rangle$ and
$\sigma_1=\langle 0,1,\infty\rangle$.
\item
$f_{0}=f_{p}=L$ and $f_1=R$.
\item
$\varphi=\pm R^{a_1}L^{b_1}R^{a_2}L^{b_2}\cdots R^{a_k}L^{b_k}$,
where $k$, $a_i$, $b_i$ are positive integers
such that $p=\sum_{i=1}^k(a_i+b_i)$.
\end{enumerate}
\end{convention}

\section{Type-preserving representations of $\pi_1(\OO)$}
\label{sec.representation}
A $\PSL(2,\CC)$-representation of
$\pi_1(T)$ (resp. $\pi_1(\OO)$) is said to be
\textit{type-preserving}
if it is irreducible
(equivalently,
it does not have a global
fixed point
in $\bar\HH^3$)
and sends the distinguished
element $D$ to a parabolic transformation.
It is well-known that
every type-preserving representation of $\pi_1(T)$
uniquely extends to a
type-preserving representation of $\pi_1(\OO)$
(see e.g. \cite[Section 2]{Jorgensen}).
Throughout this paper,
we always assume that a type-preserving representation
$\rho:\pi_1(\OO)\to \PSL(2,\CC)$
is normalized so that
\begin{equation}
\label{NormalizingD}
\rho(D)=
\begin{pmatrix} 1 & 1\\
0 &1\end{pmatrix}.
\end{equation}
Thus $\rho(D)$ is a parabolic transformation fixing
the ideal point $\infty$.

For an elliptic generator $P$,
we have $\rho(P)(\infty)\ne \infty$
if $\rho$ is faithful,
because
$\rho(P)$ fixes $\infty$
if and only if $\rho(DP)$
is an elliptic transformation of order $2$
(cf. \cite[Proposition 2.4.4]{ASWY}), whereas
$DP\in\pi_1(T)$
is a hyperbolic element
when we identify $\pi_1(T)$ with a Fuchsian group.

Let $\sigma$ be a Farey triangle
and $\{P_j\}_{j\in\ZZ}$
the sequence of elliptic generators associated with $\sigma$.
Let $\rho:\pi_1(\OO)\to \PSL(2,\CC)$
be a type-preserving representation and
assume that none of
the $\rho(P_j)$ fix
$\infty$.
Then $\{\rho(P_j)(\infty)\}_{j\in\ZZ}$
is a sequence of points in $\CC$
which is invariant by the Euclidean
translation $z\mapsto z+1$,
because
\begin{equation}
\label{trasition-equation}
\rho(P_{j+3})(\infty)=\rho(DP_{j}D^{-1})(\infty)
=\rho(DP_{j})(\infty)=\rho(P_{j})(\infty)+1.
\end{equation}
We denote by $\LL(\rho,\sigma)$
the periodic (possibly singular) piecewise-straight line in $\CC$
obtained by joining the points $\{\rho(P_j)(\infty)\}_{j\in\ZZ}$
successively.
If $\rho$ is a
faithful discrete
$\PSL(2,\RR)$-representation
of $\pi_1(\OO)$,
then $\LL(\rho,\sigma)$ gives a triangulation of the real line,
and the hyperbolic plane lying above the real line
is identified with the universal cover $\tilde\OO$.
Moreover, the vertical geodesic joining $\infty$ and
the vertex $\rho(P_j)(\infty)$ corresponds to the edge
$e_j$ introduced in Section \ref{sec.orbifold}.
In general,
(the conjugacy class of) the
representation $\rho$
can be
recovered from $\LL(\rho,\sigma)$
(cf. \cite[Section 2.4]{ASWY}),
and it plays a key role
in the construction of the triangulation $\Delta(\varphi)$
induced by the canonical decomposition of the punctured-torus bundle
$\Tbundle$.

Since the hyper-elliptic involution of $T$
generates
the center of the mapping-class group of $T$,
it extends to a fiber-preserving involution
of $\Tbundle$.
Moreover, it is realized by an isometric involution
$\iota$ of the
hyperbolic manifold $\Tbundle$.
The quotient orbifold, $\Obundle$,
is a bundle over $S^1$
with fiber $\OO$
and admits a complete hyperbolic structure
of finite volume.
Let $\hat\Gamma\cong \pi_1(\Obundle)$ be
the
Kleinian group uniformizing the hyperbolic orbifold,
and let $\rho_{\CCC}:\pi_1(\Obundle) \to \hat\Gamma
\subset \PSL(2,\CC)$
be the holonomy representation.
(As in \cite{Cannon-Dicks2},
 the notation $\rho_{\CCC}$
records the fact that the limit set of
$\hat\Gamma$ is the whole Riemann sphere $\CCC$.)
The bundle structure
gives an
exact sequence
\begin{equation}
\label{short-exact-sequence}
\CD
1 @>>> \pi_1(\OO)
@>>> \pi_1(\Obundle)
@>>> \ZZ
@>>> 1,
\endCD
\end{equation}
and the restriction of $\rho_{\CCC}$ to $\pi_1(\OO)$
is type-preserving.

The orbifold $\Obundle$ can be compactified with
a single cusp with associated group $\ZZ^2$.
Deleting a small open neighborhood of the $\ZZ^2$-cusp
leaves a compact orbifold with the same orbifold
fundamental group as
$\Obundle$ and with one boundary component;  this boundary is
a torus which we consider fixed
and we call it
the \textit{peripheral torus},
and by abuse of notation we denote it  by $\partial\Obundle$.
This terminology lifts from $\Obundle$ to $\Tbundle$.

With respect to lifting the $\ZZ^2$-cusp to $\infty$, the
fundamental group of the peripheral torus $\partial\Obundle$
is generated by the distinguished element $D$
and an element, $D^{\dagger}$,
where $D^{\dagger}$ projects to the generator $1$ of $\ZZ$
in the exact sequence (\ref{short-exact-sequence}).
Since
$\rho_{\CCC}(D)=
\begin{pmatrix} 1 & 1\\
0 &1\end{pmatrix}$
by the normalization,
we have
$\rho_{\CCC}(D^{\dagger})=
\begin{pmatrix} 1 & \lambda\\
0 &1\end{pmatrix}$
for some non-real complex number $\lambda$.
Under a suitable orientation convention,
we may assume $\Im(\lambda)>0$.
Thus the stabilizers, $\Gamma_{\infty}$ and $\hat\Gamma_{\infty}$, of
the ideal point $\infty$ with respect to the actions of
$\Gamma$ and $\hat\Gamma$, respectively, are given as follows.
\begin{align}
\label{T-boundary}
\Gamma_{\infty}
&=
\left\langle
\begin{pmatrix} 1 & 2\\
0 &1\end{pmatrix},
\begin{pmatrix} 1 & \lambda\\
0 &1\end{pmatrix}
\right\rangle
\cong
\pi_1(\partial\Tbundle)\\
\label{O-boundary}
\hat\Gamma_{\infty}
&=
\left\langle
\begin{pmatrix} 1 & 1\\
0 &1\end{pmatrix},
\begin{pmatrix} 1 & \lambda\\
0 &1\end{pmatrix}
\right\rangle
\cong
\pi_1(\partial\Obundle).
\end{align}
We may choose our peripheral torus
$\partial \Tbundle$ so that
its preimage in hyperbolic three-space
is a family of horospheres.
The horosphere at $\infty$
is acted on by $\Gamma_{\infty}$, and
can be identified with $\CC$ by projection from $\infty$,
and thus $\partial \Tbundle$ is identified with
the quotient space $\CC/\Gamma_{\infty}$.
Similar identifications hold for $\partial\Obundle$.

\section{The canonical decomposition of $\Tbundle$}
\label{sec.cusptriangulation}
Recall that each Farey triangle $\sigma$
determines a (topological) ideal triangulation $\trg(\sigma)$
of the punctured torus $T$.
Moreover, if $\sigma$ and $\sigma'$ are mutually adjacent Farey triangles,
then $\trg(\sigma')$ is obtained from $\trg(\sigma)$
by a \lq\lq diagonal exchange'',
i.e.,
by deleting
any one of the three edges and then inserting a new edge in the unique possible way.
As illustrated in Figure \ref{fig.diagonal-exchange},
$\trg(\sigma)$ and $\trg(\sigma')$
can be regarded as the bottom and top faces of an immersed
topological ideal tetrahedron
(with two
pairs of edges
identified)
in $T\times \RR$.
We denote this immersed topological ideal tetrahedron in
$T\times\RR$ by $\trg(\sigma,\sigma')$.

\begin{figure}[t!]
\begin{center}
\setlength{\unitlength}{1truecm}
\begin{picture}(9,3.4)
\put(0,.7){\epsfig{file= 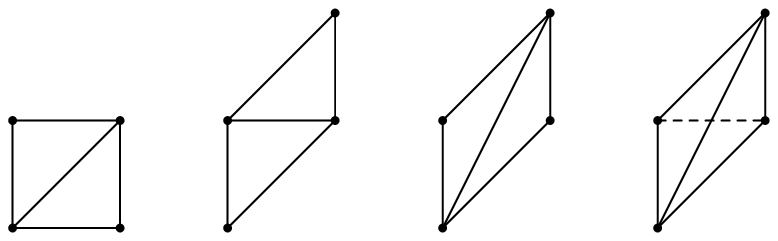, width=9truecm}}
\put(0.8,.25){\makebox(0,0)[c]{$\trg(\sigma)$}}
\put(2.2,1.5){\makebox(0,0)[c]{$=$}}
\put(3.2,.25){\makebox(0,0)[c]{$\trg(\sigma)$}}
 \put(4.2,2){\vector(1,0){.6}}
\put(5.5,.25){\makebox(0,0)[c]{$\trg(\sigma')$}}
\put(7.9,.25){\makebox(0,0)[c]{$\trg(\sigma,\sigma')$}}
\end{picture}
\caption{$\trg(\sigma)$ and $\trg(\sigma')$ form
the immersed topological ideal tetrahedron $\trg(\sigma,\sigma')$}\label{fig.diagonal-exchange}
\end{center}
\end{figure}

The immersed topological ideal tetrahedra
$\{\trg(\sigma_{n},\sigma_{n+1})\}_{n\in\ZZ}$
can be
stacked
up
to form a topological ideal triangulation of $T\times \RR$.
Since $\varphi_*(\sigma_n)=\sigma_{n+p}$ for every integer $n$,
we may assume that the covering transformation
$(x,t)\mapsto (\varphi(x), t+1)$, of the infinite-cyclic
covering $T\times \RR$ of $\Tbundle$,
sends  $\trg(\sigma_{n},\sigma_{n+1})$
to $\trg(\sigma_{n+p},\sigma_{n+p+1})$
and hence it preserves the topological ideal triangulation
of $T\times\RR$.
Thus
there is an induced
topological ideal triangulation of $\Tbundle$
consisting of $p$ ideal tetrahedra and
$2p$ ideal triangles and $p$ ideal edges.
The following theorem was found by J\o rgensen \cite{Jorgensen}
(cf. \cite{Floyd-Hatcher})
and
rigorous treatments were given
in
\cite{Akiyoshi, Lackenby, Gueritaud1,
Gueritaud2, Jorgensen, Parker}.

\begin{theorem}
\label{theorem.Jorgensen}
The topological ideal triangulation
of $\Tbundle$ described in the above
is
homeomorphic
to the canonical
decomposition of the complete hyperbolic manifold $\Tbundle$.
\end{theorem}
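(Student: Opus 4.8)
The plan is to follow the angle-structure method of Gu\'eritaud \cite{Gueritaud1, Gueritaud2}: first realise the topological triangulation geometrically by positively oriented ideal tetrahedra, then invoke Mostow--Prasad rigidity to identify the resulting metric with the complete hyperbolic structure on $\Tbundle$, and finally recognise the associated geodesic triangulation as the Epstein--Penner canonical decomposition by means of Weeks' tilt formula \cite{Epstein-Penner, Weeks}. Write $\mathcal{T}$ for the topological ideal triangulation of $\Tbundle$ assembled from the $p$ tetrahedra $\trg(\sigma_n,\sigma_{n+1})$; it has $p$ tetrahedra and $p$ ideal edges, and in the infinite-cyclic cover it lifts to the $\varphi_*$-equivariant triangulation of $T\times\RR$ by the $\trg(\sigma_n,\sigma_{n+1})$, $n\in\ZZ$, from Section~\ref{sec.cusptriangulation}.

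\emph{Geometric realisation.} Let $\mathcal{A}(\mathcal{T})$ be the space of angle structures on $\mathcal{T}$: assignments of a positive dihedral angle to each of the three pairs of opposite edges of each tetrahedron, with the angles of each tetrahedron summing to $\pi$ and those around each of the $p$ edges summing to $2\pi$. This is a bounded open convex polytope, and the first point is that $\mathcal{A}(\mathcal{T})\neq\emptyset$; here the layered structure is essential, since each edge $\beta_r$ of $\mathcal{T}$ occurs as the bottom diagonal of exactly one tetrahedron, as the top diagonal of exactly one tetrahedron, and as a doubled ``side edge'' of each intermediate tetrahedron in whose layer the slope $r$ persists, so the edge-balancing conditions form a linear system governed by the word $\prod f_n$ which one checks admits a solution with all angles positive (for instance by a leading--trailing deformation, or by writing down a symmetric solution). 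Next, the total volume $\mathcal{V}\colon\mathcal{A}(\mathcal{T})\to\RR$, the sum over the tetrahedra of the volume of the ideal tetrahedron with the prescribed angles, is strictly concave and extends continuously to $\overline{\mathcal{A}(\mathcal{T})}$; since $\Lambda'(\theta)=-\log\lvert 2\sin\theta\rvert\to+\infty$ as $\theta\to 0^{+}$, where $\Lambda$ is the Lobachevsky function, the gradient of $\mathcal{V}$ points strictly into $\mathcal{A}(\mathcal{T})$ at every boundary point of $\overline{\mathcal{A}(\mathcal{T})}$ (where some angle vanishes), so $\mathcal{V}$ attains its maximum at an interior point. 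By the Casson--Rivin criterion that point is a solution of Thurston's gluing equations defining a complete finite-volume hyperbolic structure on $\Tbundle$ in which every tetrahedron of $\mathcal{T}$ is a positively oriented ideal hyperbolic tetrahedron; by Mostow--Prasad rigidity this structure coincides with the one given by Thurston's uniformisation theorem, so $\mathcal{T}$ is a geometric ideal triangulation of the hyperbolic manifold $\Tbundle$ with shape parameters determined by the $R$/$L$ word of $\varphi$.

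\emph{Canonicity.} Fix an embedded horoball neighbourhood of the single cusp (the canonical decomposition is independent of its size when there is only one cusp). By Weeks' tilt formula, a geometric ideal triangulation of $\Tbundle$ is the Epstein--Penner canonical decomposition precisely when, for every $2$-face $f$ shared by tetrahedra $\tau$ and $\tau'$, the sum of tilts $\operatorname{tilt}_{\tau}(f)+\operatorname{tilt}_{\tau'}(f)$ is $\le 0$, and it is the canonical decomposition on the nose when this inequality is strict for every interior face. Since the monodromy carries $\trg(\sigma_n,\sigma_{n+1})$ to $\trg(\sigma_{n+p},\sigma_{n+p+1})$, there are only finitely many faces to inspect up to symmetry, in effect those separating consecutive layers, and the tilt of such a face is an explicit function of the shape parameters of the two adjacent tetrahedra, hence of the word $\prod f_n$ through the gluing equations; one then verifies that every such total tilt is strictly negative. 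Equivalently and more geometrically, in the spirit of \cite{Epstein-Penner} one lifts the ideal vertices of $\mathcal{T}$ to light-like vectors in Minkowski space $\RR^{3,1}$ (one per chosen horoball) and checks that the affine hyperplane through the four lifted vertices of each tetrahedron is a support hyperplane of their convex hull, i.e.\ that the developed, ``spiralling'' picture of $\mathcal{T}$ along the bundle is everywhere locally convex across the layer interfaces, whereupon the Epstein--Penner construction produces exactly $\mathcal{T}$.

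\emph{Main obstacle.} The identification of the metric is immediate and the concavity/interior-maximum half of the realisation step is routine once $\mathcal{A}(\mathcal{T})$ is described, so the real work lies in two places. First, non-emptiness of $\mathcal{A}(\mathcal{T})$ requires genuine use of the structure of the $R$/$L$ sequence to make the edge-balancing equations simultaneously solvable with all angles strictly positive. Second, and this is the genuinely hard part, the tilt estimate in the canonicity step demands a uniform bound: one must show that the spiralling of the layered triangulation never produces a flat or concave interface, which reduces to a positivity/monotonicity estimate on the shape parameters of consecutive tetrahedra dictated by the combinatorics of $\prod f_n$. This last point is precisely the technical heart of the rigorous treatments in \cite{Akiyoshi, Lackenby, Gueritaud1, Gueritaud2, Parker}.
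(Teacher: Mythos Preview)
The paper does not prove this theorem: it simply attributes the result to J\o rgensen and cites \cite{Akiyoshi, Lackenby, Gueritaud1, Gueritaud2, Jorgensen, Parker} for rigorous treatments, then uses it as a black box. Your proposal is therefore not a comparison against an argument in the paper but rather a sketch of one of the cited proofs, namely Gu\'eritaud's angle-structure approach.

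As a summary of that approach your outline is accurate in its architecture: non-emptiness of $\mathcal{A}(\mathcal{T})$, strict concavity of the volume functional and interior maximum via the Casson--Rivin criterion to obtain a geometric realisation, Mostow--Prasad rigidity to identify the metric, and then a convexity/tilt check to establish canonicity. You are also honest that the two genuinely nontrivial steps---exhibiting a positive angle structure adapted to the $R/L$ word, and the uniform strict-negativity of the tilts across layer interfaces---are exactly where the work lies and are not carried out here. So the proposal is a correct high-level roadmap rather than a proof; if this were submitted as a proof one would have to say the substance is still outsourced to \cite{Gueritaud1, Gueritaud2, Lackenby, Akiyoshi}, which is precisely what the paper itself does by citation.
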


In particular, the triangulation of the chosen peripheral torus $\partial \Tbundle$
induced by the canonical tetrahedral decomposition of $\Tbundle$
is combinatorially isomorphic to the triangulation
induced by the above combinatorial tetrahedral decomposition.
We now describe this combinatorial triangulation
following \cite{Gueritaud1}.
Since $\trg(\sigma_{n})$ is an ideal triangulation
of a fiber surface $T$ consisting of two ideal triangles,
it induces a triangulation, $C(\sigma_n)$, of
some chosen
\textit{peripheral circle}
$\partial T$ in $T$.
The triangulation $C(\sigma_n)$ consists of $6$ edges,
which correspond to the $6$
ideal vertices of the two
ideal triangles.
The region in $\partial T\times \RR$ bounded by $C(\sigma_n)$ and
$C(\sigma_{n+1})$ consists of $4$ triangles,
which correspond to the $4$ ideal vertices
of the tetrahedron $\trg(\sigma_{n},\sigma_{n+1})$
(see Figure \ref{fig.trucated-tetrahedron}).

\begin{figure}[t!]
\begin{center}
\setlength{\unitlength}{1truecm}
\begin{picture}(10.5,7.9)
\put(0,0){\epsfig{file= 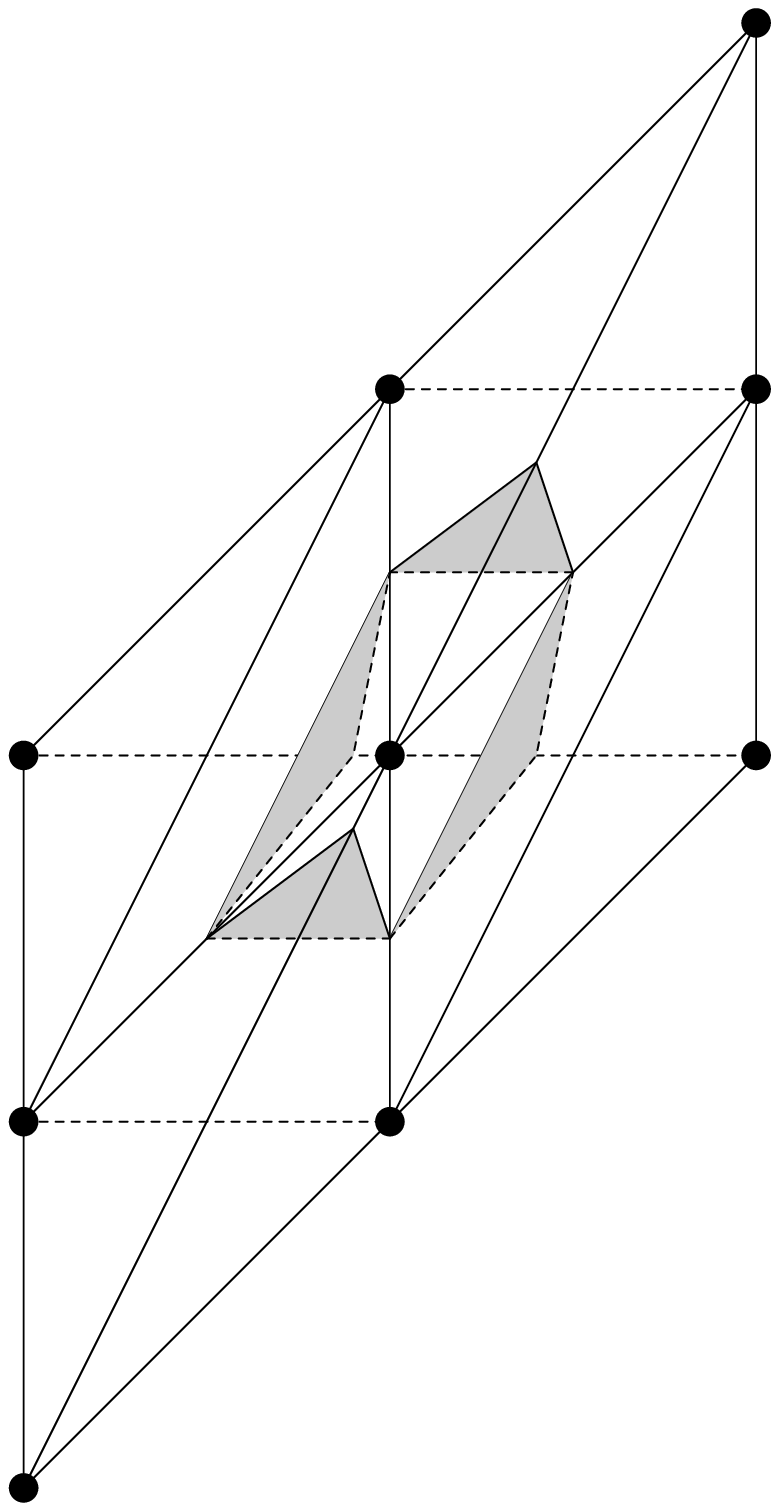, width=4truecm}}
\put(4.5,1.6){\epsfig{file= 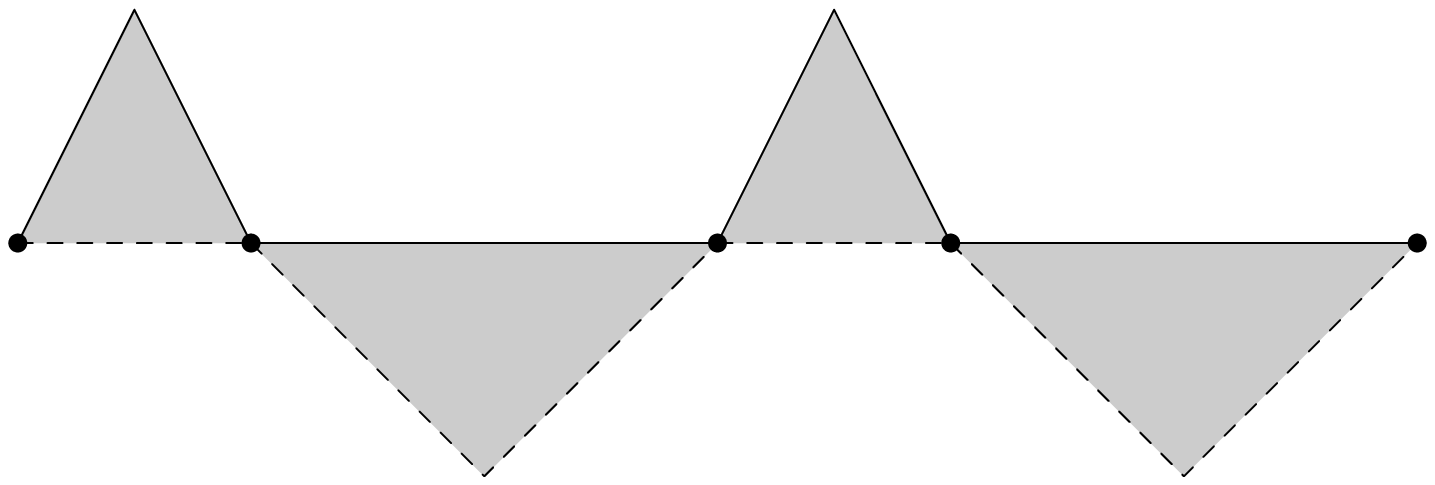, width=6truecm}}
\put(1.9,3.5){\vector(1,0){3}}
\put(1,3.2){\makebox(0,0)[c]{$a$}}
\put(2.2,3){\makebox(0,0)[c]{$b$}}
\put(3.1,4.8){\makebox(0,0)[c]{$c$}}
\put(1.85,5){\makebox(0,0)[c]{$d$}}
\put(4.6,2.3){\makebox(0,0)[c]{$a$}}
\put(5.15,2.9){\makebox(0,0)[c]{$\circlearrowright$}}
\put(5.5,2.3){\makebox(0,0)[c]{$b$}}
\put(6.5,2.3){\makebox(0,0)[c]{$\circlearrowleft$}}
\put(7.5,2.3){\makebox(0,0)[c]{$c$}}
\put(7.97,2.9){\makebox(0,0)[c]{$\circlearrowright$}}
\put(8.3,2.3){\makebox(0,0)[c]{$d$}}
\put(9.37,2.3){\makebox(0,0)[c]{$\circlearrowleft$}}
\put(10.3,2.3){\makebox(0,0)[c]{$a$}}
\put(5.5,3.8){\makebox(0,0)[c]{upward apex}}
\put(6.7,1.4){\makebox(0,0)[c]{downward apex}}
\end{picture}
\caption{The developed image of the triangles corresponding to the
$4$ ideal vertices of the ideal tetrahedron}\label{fig.trucated-tetrahedron}
\end{center}
\end{figure}

Since the family $\{\trg(\sigma_{n})\}_{n\in\ZZ}$
forms the $2$-skeleton of the
ideal triangulation of $T\times\RR$,
invariant by the covering transformation
$(x,t)\mapsto (\varphi(x), t+1)$,
the family $\{C(\sigma_n)\}_{n\in\ZZ}$
forms the $1$-skeleton of a triangulation
of $\partial T\times \RR$
invariant by the covering transformation.
It descends to a triangulation of
the peripheral torus
$\partial \Tbundle$.
This is the triangulation induced by the topological ideal triangulation
of $\Tbundle$.
Let $\Delta^*(\varphi)$ be the lift of
this triangulation of $\partial\Tbundle$ to
its universal cover $\widetilde{\partial\Tbundle}$.
Since $\widetilde{\partial\Tbundle}$ is identified
with the universal cover $\widetilde{\partial\Obundle}$ of
$\partial\Obundle$ and since
the above triangulation of $\partial\Tbundle$
is invariant by the hyper-elliptic involution,
$\Delta^*(\varphi)$ gives a triangulation of
$\widetilde{\partial\Obundle}$ invariant by
$\pi_1(\partial\Obundle)$.
We identify $\widetilde{\partial\Obundle}$ with the complex plane $\CC$,
where the action of the generators $D$ and $D^{\dagger}$
of $\pi_1(\partial\Obundle)$ are given by the
following
formulas.
\begin{equation}
D(z)=z+1, \quad D^{\dagger}(z)=z+\lambda.
\end{equation}
Here $\lambda$ is the complex number in (\ref{O-boundary}).
Then the inverse image of $C(\sigma_n)$
in $\CC$
is a bi-infinite, horizontal, piecewise-straight
line, $\LL(\sigma_n)$,
which is invariant by $D$.
The translation $D$ shifts each vertex and edge of $\LL(\sigma_n)$
to the right by $3$ units.
The piecewise-straight line $\LL(\sigma_{n+1})$ lies above $\LL(\sigma_n)$, and
the $1$-skeleton of $\Delta^*(\varphi)$ is obtained by
stacking up these bi-infinite piecewise-straight lines.
Thus the set $\{\LL(\sigma_n)\}_{n\in\ZZ}$
gives a layered structure of
$\Delta^*(\varphi)$ in the sense of Definition \ref{layered
complex} (1)
below.
Moreover,  $(\Delta^*(\varphi),\{\LL(\sigma_n)\}_{n\in\ZZ})$
can be regarded as
a layered $\pi_1(\partial\Obundle)$-simplicial complex
in the sense of Definition \ref{layered complex} (3) below.
See Figure \ref{fig.cusp-triangulation}, where
the vertices are \lq\lq opened up''
in order to emphasize the layered structure.

\begin{definition}
\label{layered complex}
\rm
(1) By a \textit{layered structure} of a
$2$-dimensional simplicial complex $K$ with underlying space $\CC$,
we mean a  family of $1$-dimensional subcomplexes
$\{\LL_n\}_{n\in\ZZ}$ indexed by $\ZZ$,
satisfying the following conditions.
\begin{enumerate}[(i)]
\item
Each $\LL_n$ gives a triangulation of
a subspace homeomorphic to the real line $\RR$,
and $\cup_{n\in\ZZ}\LL_n$ forms the $1$-skeleton of $K$.
\item
For each pair $\LL_m$ and $\LL_n$ with $m\ne n$,
the union of $\LL_m$ and $\LL_n$ cuts off a region in $\CC$
homomorphic to a (possibly-pinched) infinite strip.
\item
$\{\LL_n\}_{n\in\ZZ}$ lie in $\CC$ in the order of the index
from bottom to top.
\end{enumerate}
We call the pair $(K,\{\LL_n\}_{n\in\ZZ})$
a \textit{layered simplicial complex}.
It is often abbreviated to $(K,\{\LL_n\})$.

(2) By an \textit{isomorphism} between two
layered simplicial complexes
$(K,\{\LL_n\})$ and $(K',\{\LL_n'\})$
we mean a simplicial isomorphism
from $K$ to $K'$
which maps $\LL_n$ to $\LL_{n+d}$,
where $d$ is an integer independent of $n$.

(3) For a group $G$, a
\textit{layered $G$-simplicial complex}
is a layered simplicial complex $(K,\{\LL_n\})$
together with an action by $G$
where each element of $G$ acts as
an
automorphism of
$(K,\{\LL_n\})$.
An \textit{isomorphism} between
layered $G$-simplicial complexes
is defined to be a $G$-equivariant
isomorphism between the layered simplicial complexes.
\end{definition}

\begin{figure}[t!]
\begin{center}
\setlength{\unitlength}{1truecm}
\begin{picture}(12,10.8)
\put(2.9,1.75){\makebox(0,0)[c]{$\textstyle\sigma_{\scriptscriptstyle{\!-3}}$}}
\put(2.87,2.41){\makebox(0,0)[c]{$\textstyle\sigma_{\scriptscriptstyle{\!-2}}$}}
\put(2.55,2.75){\makebox(0,0)[c]{$\textstyle\sigma_{\scriptscriptstyle{\!-1}}$}}
\put(2.2,3.2){\makebox(0,0)[c]{$\textstyle\sigma_{\scriptscriptstyle{\!0}}$}}
\put(1.6,3.6){\makebox(0,0)[c]{$\textstyle\sigma_{\scriptscriptstyle{\!1}}$}}
\put(1.7,4.55){\makebox(0,0)[c]{$\textstyle\sigma_{\scriptscriptstyle{\!2}}$}}
\put(1.7,5.2){\makebox(0,0)[c]{$\textstyle\sigma_{\scriptscriptstyle{\!3}}$}}
\put(1.05,5.55){\makebox(0,0)[c]{$\textstyle\sigma_{\scriptscriptstyle{\!4}}$}}
\put(1.2,6.35){\makebox(0,0)[c]{$\textstyle\sigma_{\scriptscriptstyle{\!5}}$}}
\put(1.15,7.0){\makebox(0,0)[c]{$\textstyle\sigma_{\scriptscriptstyle{\!6}}$}}
\put(1.75,7.2){\makebox(0,0)[c]{$\textstyle\sigma_{\scriptscriptstyle{\!7}}$}}
\put(1.75,7.815){\makebox(0,0)[c]{$\textstyle\sigma_{\scriptscriptstyle{\!8}}$}}
\put(1.45,8.155){\makebox(0,0)[c]{$\textstyle\sigma_{\scriptscriptstyle{\!9}}$}}
\put(1.2,8.63){\makebox(0,0)[c]{$\textstyle\sigma_{\scriptscriptstyle{\!10}}$}}
\put(6.75,1.65){\makebox(0,0)[c]{$\scriptstyle\circlearrowright$}}
\put(8.5,1.15){\makebox(0,0)[c]{$\scriptstyle\circlearrowleft$}}
\put(0,.1){\epsfig{file=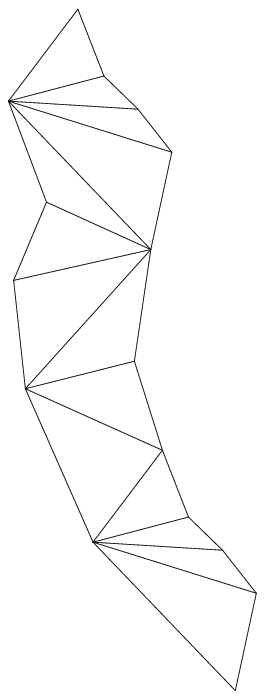, width =3.6truecm}}
\put(3.9,0){\epsfig{file=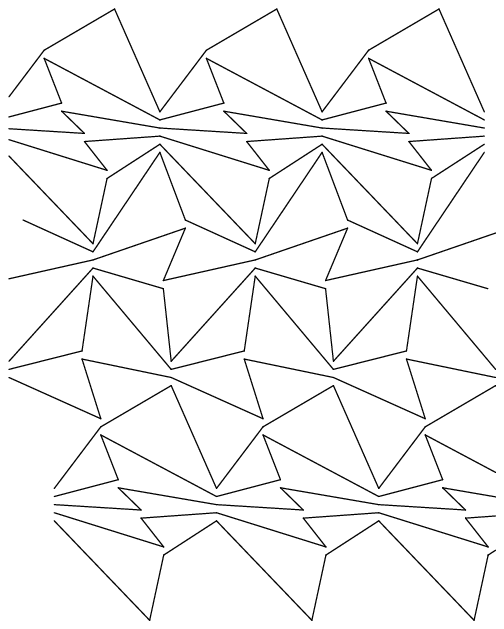, width =8.1truecm}}
\end{picture}
\caption{The layered  simplicial complex
$(\Delta^*(\varphi),\{\LL(\sigma_n)\})$}\label{fig.cusp-triangulation}
\end{center}
\end{figure}

By Theorem \ref{theorem.Jorgensen},
$\Delta^*(\varphi)$ gives a combinatorial model
of $\Delta(\varphi)$.
To describe the correspondence,
consider the $\pi_1(\OO)$-invariant ideal triangulation
of $\HH^3$, obtained as the lift of the canonical decomposition
of the hyperbolic manifold $\Tbundle$.
Then $\Delta(\varphi)$ is obtained as its intersection
with the horosphere $\CC\times\{t_0\}$,
where $t_0$ is a sufficiently large positive real number.
Thus a vertex of $\Delta(\varphi)$ corresponds to a
vertical edge (i.e., an edge emanating from $\infty$)
of the lifted canonical decomposition, and vice versa.
Similarly, an edge of $\Delta(\varphi)$
corresponds to a vertical face
(i.e., a face having $\infty$ as a vertex)
of the lifted canonical decomposition, and vice versa.
Recall that we identify $\Delta(\varphi)$ with its
projection to the complex plane $\CC\subset\partial\HH^3$.
Then a vertex of $\Delta(\varphi)$
is equal to the endpoint of the corresponding vertical edge
of the lifted canonical decomposition,
and an edge of $\Delta(\varphi)$
is equal to the line segment
joining two vertices
obtained as the projection
of the corresponding face
of the lifted canonical decomposition.

Since the layer $\LL(\sigma_n)$ of $\Delta^*(\varphi)$
arises from the ideal triangulation $\trg(\sigma_n)$,
it follows that
the vertex set of its image in $\Delta(\varphi)$
is equal to $\{\rho_{\CCC}(P_j^{(n)})(\infty)\}_{j\in\ZZ}$,
where $\{P_j^{(n)}\}_{j\in\ZZ}$ is the sequence of
elliptic generators associated with $\sigma_n$,
and
$\rho_{\CCC}:\pi_1(\OO)\to \PSL(2,\CC)$
is the type-preserving representation obtained from
the holonomy representation of
the complete hyperbolic orbifold $\Obundle$.
Thus the layer $\LL(\sigma_n)$ of $\Delta^*(\varphi)$
corresponds to the periodic piecewise-straight line
$\LL(\rho_{\CCC},\sigma_n)$.
Hence we obtain the following theorem.

\begin{theorem}
\label{thm.canonical-decomposition}
The family $\{\LL(\rho_{\CCC},\sigma_n)\}_{n\in\ZZ}$ forms a layered structure of
$\Delta(\varphi)$ invariant by the action of $\pi_1(\partial\Obundle)$, and
the layered
$\pi_1(\partial\Obundle)$-simplicial complexes
$(\Delta(\varphi), \{\LL(\rho_{\CCC},\sigma_n)\})$
and  $(\Delta^*(\varphi),\{\LL(\sigma_n)\})$ are isomorphic.
In particular, $\Delta(\varphi)$ is described as follows.
\begin{enumerate}[\normalfont (1)]
\item
The vertex set of $\Delta(\varphi)$ consists of the points
\[
\rho_{\CCC}(P_j^{(n)})(\infty).
\]
\item
The edge set of $\Delta(\varphi)$ consists of
\[
\langle \rho_{\CCC}(P_j^{(n)})(\infty),\rho_{\CCC}(P_{j+1}^{(n)})(\infty) \rangle.
\]
\item
The face set of $\Delta(\varphi)$ consists of the convex hulls of
\[
\{\rho_{\CCC}(P_{j}^{(n)})(\infty),\rho_{\CCC}(P_{j+1}^{(n)})(\infty),
\rho_{\CCC}(P_{j+2}^{(n)})(\infty)\},
\]
such that $(P_{j}^{(n)},P_{j+2}^{(n)})$ is a pair of
successive elements in the sequence of elliptic generators
associated with $\sigma_{n-1}$ or
$\sigma_{n+1}$.
\end{enumerate}
Here
$\{P_j^{(n)}\}_{j\in\ZZ}$ is the sequence of
elliptic generators associated with $\sigma_n$.

\end{theorem}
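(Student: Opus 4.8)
The plan is to establish the isomorphism of layered $\pi_1(\partial\Obundle)$-simplicial complexes
$(\Delta(\varphi),\{\LL(\rho_{\CCC},\sigma_n)\})\cong(\Delta^*(\varphi),\{\LL(\sigma_n)\})$,
from which assertions (1)--(3) follow by reading off the cell structure of $\Delta^*(\varphi)$ recorded in Section~\ref{sec.cusptriangulation}. By Theorem~\ref{theorem.Jorgensen}, $\Delta(\varphi)$ is combinatorially the triangulation of the peripheral torus $\partial\Tbundle$ induced by the canonical tetrahedral decomposition of $\Tbundle$, and $\Delta^*(\varphi)$ is its lift to $\widetilde{\partial\Obundle}=\CC$; so the two complexes are abstractly isomorphic as $\pi_1(\partial\Obundle)$-simplicial complexes. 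What remains is to match the \emph{layered} structures, i.e., to verify that under this isomorphism the layer $\LL(\sigma_n)$ of $\Delta^*(\varphi)$ corresponds precisely to the piecewise-straight line $\LL(\rho_{\CCC},\sigma_n)$ in $\CC$, and that the geometry is as described: vertices of $\Delta(\varphi)$ are the points $\rho_{\CCC}(P_j^{(n)})(\infty)$, edges are the straight segments between consecutive such points, and faces are the projected faces of the lifted canonical decomposition.

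First I would unwind the correspondence between the cusp triangulation and the ideal triangulation of $\Tbundle$: a vertex (resp.\ edge, face) of $\Delta(\varphi)$ corresponds to a vertical edge (resp.\ vertical face, tetrahedron-to-face incidence) of the lifted canonical decomposition, where ``vertical'' means incident to $\infty$. This is the standard horospherical cross-section picture, already recalled in the text after Theorem~\ref{theorem.Jorgensen}. Next I would identify the vertical edges: since $\trg(\sigma_n)$ is the ideal triangulation of the fiber associated with $\sigma_n$, its lift to $\HH^3$ has, emanating from $\infty$, exactly the edges $\langle\infty,\rho_{\CCC}(P_j^{(n)})(\infty)\rangle$ indexed by the elliptic generators $P_j^{(n)}$ of $\sigma_n$ (using that $\rho_{\CCC}$ is type-preserving, so none of the $\rho_{\CCC}(P_j^{(n)})$ fixes $\infty$, as in Section~\ref{sec.representation}). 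Projecting from $\infty$ gives exactly the vertices $\rho_{\CCC}(P_j^{(n)})(\infty)\in\CC$, which are the vertices of $\LL(\rho_{\CCC},\sigma_n)$; consecutive ones are joined by the edges of $\LL(\rho_{\CCC},\sigma_n)$, which are the vertical faces of the $2$-skeleton $\{\trg(\sigma_n)\}$. The translations $D,D^\dagger$ act by $z\mapsto z+1$, $z\mapsto z+\lambda$, matching formula~\eqref{trasition-equation} and~\eqref{O-boundary}, so the identification is $\pi_1(\partial\Obundle)$-equivariant, and since $\varphi_*(\sigma_n)=\sigma_{n+p}$ the index shift by $p$ is respected — establishing that $\{\LL(\rho_{\CCC},\sigma_n)\}$ is a layered structure in the sense of Definition~\ref{layered complex}.

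For the face set, I would use that each tetrahedron $\trg(\sigma_n,\sigma_{n+1})$ of the canonical decomposition contributes four truncation triangles (Figure~\ref{fig.trucated-tetrahedron}), two per shared edge between the cross-sections $C(\sigma_n)$ and $C(\sigma_{n+1})$; each such truncation triangle, projected from $\infty$, is a Euclidean triangle with vertices $\rho_{\CCC}(P_j^{(n)})(\infty)$, $\rho_{\CCC}(P_{j+1}^{(n)})(\infty)$, $\rho_{\CCC}(P_{j+2}^{(n)})(\infty)$ where the ``middle'' generator comes from the diagonal exchange and $(P_j^{(n)},P_{j+2}^{(n)})$ is a consecutive pair in the sequence for the \emph{adjacent} triangle $\sigma_{n\pm1}$ — this is precisely the combinatorial content of Proposition~\ref{prop.model-Delta} (referenced but not reproduced in the excerpt) and of part~(2) of the elliptic-generator proposition describing how adjacent sequences are related by $\inn{P_{3j+1}}$. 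Assembling these pieces gives the isomorphism $(\Delta(\varphi),\{\LL(\rho_{\CCC},\sigma_n)\})\cong(\Delta^*(\varphi),\{\LL(\sigma_n)\})$ and hence (1)--(3).

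The main obstacle I anticipate is not the abstract combinatorics but the \emph{geometric} identification in the final clause of each assertion: that the projected images of the edges and faces of the lifted canonical decomposition are honest Euclidean line segments and convex hulls in $\CC$, and in particular that the projected truncation triangles are \emph{non-degenerate} and tile $\CC$ without overlap. A vertical geodesic face in $\HH^3$ always projects to a Euclidean segment, so edges are straightforward; but showing that the layers $\LL(\rho_{\CCC},\sigma_n)$ are genuinely non-singular piecewise-straight lines stacked in order from bottom to top (condition~(iii) of Definition~\ref{layered complex}) requires knowing that the canonical decomposition is \emph{geometric}, i.e., made of positively-oriented ideal tetrahedra — and this is exactly where Theorem~\ref{theorem.Jorgensen} (J\o rgensen's theorem, as made rigorous in the cited works, notably G\"ueritaud's angle-structure argument) is doing the real work. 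So the proof is essentially a matter of carefully transcribing the known geometric structure of the canonical decomposition into the elliptic-generator language of Section~\ref{sec.orbifold}, with Theorem~\ref{theorem.Jorgensen} supplying the one nontrivial input.
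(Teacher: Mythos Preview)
Your proposal is correct and follows essentially the same approach as the paper: the paper's proof is the discussion in the paragraphs immediately preceding the theorem, which identifies vertices of $\Delta(\varphi)$ with endpoints of vertical edges of the lifted canonical decomposition (hence with the points $\rho_{\CCC}(P_j^{(n)})(\infty)$), identifies edges with projections of vertical faces, and concludes that the layer $\LL(\sigma_n)$ corresponds to $\LL(\rho_{\CCC},\sigma_n)$, all resting on Theorem~\ref{theorem.Jorgensen}. You have reconstructed this faithfully and in somewhat greater detail, including the explicit observation that the geometric non-degeneracy of the layers is supplied by the geometricity of the canonical decomposition.
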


The above argument also implies the following combinatorial description
of the topological model $\Delta^*(\varphi)$ of $\Delta(\varphi)$.

\begin{proposition}
\label{prop.model-Delta}
{\normalfont(1)}
The vertex set of $\Delta^*(\varphi)$
is identified with the set of the elliptic generators $P$
such that $s(P)$ is a vertex of $\sigma_n$ for some $n\in\ZZ$.

{\normalfont(2)}
The edge set of $\Delta^*(\varphi)$
is identified with the set of pairs $(P,Q)$
of elliptic generators,
such that $P$ and $Q$ are two successive elements
in the sequence of elliptic generators associated with
$\sigma_n$ for some $n\in\ZZ$.

{\normalfont(3)}
The face set of $\Delta^*(\varphi)$
is identified with the set of triples $(P,Q,R)$
of elliptic generators,
such that the following hold for some $n\in\ZZ$.
\begin{enumerate}[\normalfont (i)]
\item
$(P,Q,R)$ is an elliptic generator triple
associated with $\sigma_n$.
\item
$P$ and $R$ are two successive elements
of the sequence of elliptic generators associated with
$\sigma_{n-1}$ or $\sigma_{n+1}$.
\end{enumerate}

{\normalfont(4)} The layer $\LL(\sigma_n)$ corresponds to the union of
the bi-infinite family of edges
$\{\langle P_j^{(n)},P_{j+1}^{(n)}\rangle\}_{j\in\ZZ}$,
where $\{P_j^{(n)}\}_{j\in\ZZ}$ is the sequence of
elliptic generators associated with $\sigma_n$
\end{proposition}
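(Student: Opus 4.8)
The plan is to identify $\Delta^*(\varphi)$ with the concrete simplicial complex built from elliptic generators, using the correspondence established in the discussion preceding the statement, namely that the layer $\LL(\sigma_n)$ of $\Delta^*(\varphi)$ is the image of the ideal triangulation $\trg(\sigma_n)$ and hence its vertices correspond to the sequence $\{P_j^{(n)}\}_{j \in \ZZ}$ of elliptic generators associated with $\sigma_n$. First I would treat part~(4), which is essentially the definition of how the layered structure sits inside $\Delta^*(\varphi)$: the layer $\LL(\sigma_n)$ is a bi-infinite piecewise-straight line whose successive vertices are the points corresponding to $P_j^{(n)}$, so its edges are exactly the $\langle P_j^{(n)}, P_{j+1}^{(n)}\rangle$. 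Parts~(1) and~(2) then follow by taking the union over $n \in \ZZ$ of the vertices and edges of the layers, invoking Definition~\ref{layered complex}(1)(i) that $\cup_n \LL(\sigma_n)$ is the $1$-skeleton of $\Delta^*(\varphi)$; the only point to check is that a vertex of $\Delta^*(\varphi)$ lying on layer $\LL(\sigma_n)$ has slope a vertex of $\sigma_n$, which is Proposition~\ref{proposition}(1)(iv) together with the normalization of indices in the convention relating $\{P_j\}$ to $\sigma = \langle r_0, r_1, r_2\rangle$.

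The substantive part is (3), the description of the faces. Here I would argue as follows. A face (a $2$-simplex) of $\Delta^*(\varphi)$ lies in the region of $\partial T \times \RR$ between two consecutive layers $C(\sigma_n)$ and $C(\sigma_{n+1})$, and corresponds to one of the four triangles cut off at the four ideal vertices of the tetrahedron $\trg(\sigma_n,\sigma_{n+1})$ as in Figure~\ref{fig.trucated-tetrahedron}. Two of these four truncation triangles have their \emph{apex} (the single vertex not shared with the base layer) on the \emph{lower} layer $\LL(\sigma_n)$ --- these are the ``upward apex'' triangles --- and the other two have apex on the \emph{upper} layer $\LL(\sigma_{n+1})$. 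Say the apex lies on $\LL(\sigma_m)$ with $m \in \{n, n+1\}$; then the apex is a vertex $Q = P_{j+1}^{(m)}$ for some $j$, and the opposite edge of the triangle joins the two neighbors $P_j^{(m)}$ and $P_{j+2}^{(m)}$, which in turn must be successive vertices on the other layer $\LL(\sigma_{m'})$ with $\{m,m'\} = \{n, n+1\}$. The combinatorial content of the diagonal exchange passing from $\trg(\sigma_m)$ to $\trg(\sigma_{m'})$ --- which by Proposition~\ref{proposition}(2) is exactly the transition $(P'_{3j}, P'_{3j+1}, P'_{3j+2}) = (P_{3j}, \inn{P_{3j+1}}(P_{3j+2}), P_{3j+1})$ between adjacent elliptic generator sequences --- is precisely what guarantees that $(P_j^{(m)}, P_{j+2}^{(m)})$ reappears as a pair of successive elements on $\LL(\sigma_{m'})$. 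Writing this out with the index normalization gives conditions~(i) and~(ii): $(P, Q, R) = (P_j^{(m)}, P_{j+1}^{(m)}, P_{j+2}^{(m)})$ is an elliptic generator triple associated with $\sigma_m$, and $P, R$ are successive in the sequence for $\sigma_{m\pm1}$.

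For the converse direction in (3), I would start from a triple $(P,Q,R)$ satisfying (i) and (ii) for some $n$ (playing the role of $m$ above), with $P, R$ successive on $\sigma_{n-1}$ say; then the quadruple of vertices $\{P, Q, R\}$ on layers $\LL(\sigma_{n-1}) \cup \LL(\sigma_n)$ together with the convexity/adjacency data realizes the truncation triangle at one ideal vertex of $\trg(\sigma_{n-1},\sigma_n)$, hence is a genuine $2$-simplex of $\Delta^*(\varphi)$; the case of $\sigma_{n+1}$ is symmetric. The main obstacle I anticipate is bookkeeping: keeping the ``upward apex / downward apex'' dichotomy consistent with the orientation conventions ($\circlearrowleft$ vs $\circlearrowright$ reading of Farey triangles, fixed in Convention~\ref{convention.Farey}) and with the $R$/$L$-labelling of the transition $\sigma_n \to \sigma_{n+1}$, so that ``$\sigma_{n-1}$ or $\sigma_{n+1}$'' in condition (ii) is correctly not just ``the other layer'' but genuinely allows both, depending on which of the two apexes on a given layer one is looking at. Everything else reduces to unwinding Figure~\ref{fig.trucated-tetrahedron} and Proposition~\ref{proposition}(2), which are routine once the dictionary between truncation triangles and elliptic-generator triples is set up carefully.
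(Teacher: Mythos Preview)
Your proposal is correct and matches the paper's approach. In fact the paper gives no separate proof of this proposition at all: it simply states that ``the above argument also implies'' the result, referring to the discussion that builds $\Delta^*(\varphi)$ from the layers $\LL(\sigma_n)$, identifies their vertices with the elliptic-generator sequences $\{P_j^{(n)}\}$, and reads off the faces from the four truncation triangles of each tetrahedron $\trg(\sigma_n,\sigma_{n+1})$ as in Figure~\ref{fig.trucated-tetrahedron}. Your write-up is a careful unpacking of precisely that argument, including the use of Proposition~2.1(2) to see that the base edge of each truncation triangle reappears as a pair of successive elliptic generators on the adjacent layer; the bookkeeping worry you flag about upward/downward apexes is real but is handled exactly as you suggest, by the $R/L$ dichotomy governing which of $\sigma_{n\pm1}$ appears.
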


\section{The Cannon-Thurston map}
\label{sec.Cannon-Thurston}

In this section,
we recall the combinatorial description of the
Cannon-Thurston map following \cite{Cannon-Dicks1, Cannon-Dicks2}.
Recall that we have identified $\pi_1(\OO)$ with a Fuchsian group
by choosing a complete hyperbolic metric of $\OO$
of finite area.
Let
$\rho_{\RRR}:\pi_1(\OO)\to\PSL(2,\RR)\subset \PSL(2,\CC)$
be the holonomy representation inducing the identification.
Then the \textit{limit set},
$\Lambda(\rho_{\RRR})$,
of the group $\rho_{\RRR}(\pi_1(\OO))$
is equal to the round circle $\RRR$.

We recall a combinatorial description of the
$\pi_1(\OO)$-space $\Lambda(\rho_{\RRR})$
following \cite{Alperin-Dicks-Porti} (cf. \cite{Floyd}).
Let $\End$ be the space of the ends
of the group $\pi_1(\OO)$.
Here an \textit{end} of $\pi_1(\OO)$
is an infinite reduced word in $A$, $B$, $C$,
i.e., an infinite sequence
$X_1X_2\cdots X_n\cdots$
such that $X_n\in\{A,B,C\}$  and $X_n\ne X_{n+1}$
for every $n\in\NN$.
For a finite reduced
word $F$ in $A$, $B$, $C$,
let $[F]$ be the subset of $\End$
consisting of those infinite words
which has $F$ as an initial segment.
The space $\End$ is endowed with the topology
such that
the family $\{[F]\}$,
where $F$ runs over all finite reduced words,
forms a base of the topology.
Then $\End$ is a Cantor set,
i.e., a compact, Hausdorff, totally disconnected,
topological space.
Note that the left multiplication of $\pi_1(\OO)$ induces
a left action of $\pi_1(\OO)$ on $\End$.

For an infinite reduced word $W$,
let $W_n$ be the initial $n$-th segment of $W$.
Then $\lim \rho_{\RRR}(W_n)(\infty)$ exists in $\RRR$ for every
infinite reduced word $W\in\End$,
and the map $\End\to \RRR=\Lambda(\rho_{\RRR})$
defined by $W\mapsto \lim \rho_{\RRR}(W_n)(\infty)$
is a continuous $\pi_1(\OO)$-equivariant surjective map.
Moreover, if $\sim$ denotes the equivalence relation
on $\End$ generated by the relation
\begin{equation}
\label{fuchsian-equivalence-relation}
WD^{\infty}\sim WD^{-\infty}
\end{equation}
where $W$
runs over elements of $\pi_1(\OO)$,
then the above continuous map induces a
$\pi_1(\OO)$-equivariant homeomorphism
from $\End/{\sim}$ to $\Lambda(\rho_{\RRR})=\RRR$.
It should be noted that
the equivalence relation $\sim$
is independent of the choice of a
complete hyperbolic structure of $\OO$.

Recall the type-preserving representation
$\rho_{\CCC}:\pi_1(\OO)\to\PSL(2,\CC)$
associated with the complete hyperbolic orbifold
$\Obundle$. Then the limit set
$\Lambda(\rho_{\CCC})$ of the Kleinian group
$\rho_{\CCC}(\pi_1(\OO))$ is equal to the whole Riemann
sphere
$\CCC$. It was proved by McMullen \cite{McMullen}
that
for every infinite word $W\in\End$
the sequence $\rho_{\CCC}(W_n)(\infty)$ converges in $\CCC$
and that the map
$\End\to \CCC$
defined by $W\mapsto \lim \rho_{\CCC}(W_n)(\infty)$
is a continuous $\pi_1(\OO)$-equivariant surjective map,
which in turn
induces a continuous $\pi_1(\OO)$-equivariant surjective map
\[
\CT:\RRR=\Lambda(\rho_{\RRR})\to\Lambda(\rho_{\CCC})=\CCC.
\]
This is called the \textit{Cannon-Thurston map}
associated to the punctured-torus bundle $\Tbundle$.

Following \cite{Cannon-Dicks1, Cannon-Dicks2}, we now recall the
combinatorial description of the Cannon-Thurston map $\CT$
established by Bowditch \cite{Bowditch}. Since we have fixed a
complete hyperbolic structure on $\OO$, we can identify the
universal cover of $\OO$ with the upper half-space
$\CC_{+}:=\{z\in\CC \mid \Im z>0\}$. Thus we obtain the following
tower of (topological) coverings:
\begin{equation}
\label{covering-tower}
\CC_{+}\to (\RR^2-\ZZ^2) \to T\to \OO.
\end{equation}
Recall the attractive fixed point, $\mu_+$, of the linear fractional
action of $\varphi_*$ on $\bar\CC_{+}$ defined by
(\ref{linear-fractional-transformation}), i.e., the slope of the
expanding eigenspace of the linear map $\varphi\in SL(2,\ZZ)$.
Consider the foliation of $T$ determined by the foliation of
$\RR^2-\ZZ^2$ by lines of slope $\mu_+$. This foliation descends to
a foliation of $T$, which is the stable foliation of the
pseudo-Anosov homeomorphism $\varphi$, and it lifts to a foliation
of $\CC_{+}$ whose leaves are homeomorphic to the open interval.
Each end of each leaf has a well-defined endpoint on $\RRR$, and the
closure of each leaf, which we call a
\textit{
closed-up
leaf}, is homeomorphic to the closed interval.
Now let $\Parabolic$ be the set of the parabolic fixed points
of the Fuchsian group $\rho_{\RRR}(\pi_1(\OO))$.
Then two closed-up leaves have a common point if
and only if they share a point, say $p$, in $\Parabolic$.
Moreover,
either of these closed-up leaves is translated to the
other by a parabolic element of
$\rho_{\RRR}(\pi_1(\OO))$
with parabolic fixed point $p$.
For each $p\in\Parabolic$,
the union of the closed-up leaves with an endpoint $p$
is called the \textit{spider} with \textit{head} $p$.
Each of the closed-up leaves in the spider
is called a \textit{leg}, and the endpoint of a leg
different from the head is called a \textit{foot}.
It should be noted that the image of a spider in $\RR^2-\ZZ^2$
consists of a puncture and the pair
of rays of slope $\mu_+$ emanating from the puncture.
We thus obtain a partition, $\Partition_+$,
of $\bar\CC_+=\CC_+\cup\RRR$ into
(i) spiders,
(ii) closed-up leaves disjoint from $\Parabolic$, and
(iii) one-point sets in $\RR$ disjoint from the endpoints
of the closed-up leaves.

By applying the complex conjugate to (\ref{covering-tower}), we obtain yet
another tower
of (topological) coverings:
\[
\CC_{-}\to (\RR^2-\ZZ^2) \to T\to \OO,
\]
where $\CC_{-}:=\{z\in\CC \mid \Im z<0\}$ is the lower half-space.
Starting from the foliation of $\RR^2-\ZZ^2$ by lines of slope
$\mu_-$, the repulsive fixed point of $\varphi_*$, we obtain a
similar partition, $\Partition_-$, of $\bar\CC_-=\CC_-\cup\RRR$. A
piece in $\Partition_+$ and that of $\Partition_-$, which are not
one-point sets, intersect if and only if they are spiders with a
common head, say $p\in\Parabolic$. Their union is called the
\textit{double spider} with head $p$. It should be noted that the
image of a double spider in $\RR^2-\ZZ^2$ consists of a puncture and
the two pairs of rays of slopes $\mu_+$ and $\mu_-$ emanating from
the puncture. So the feet of a double spider, contained in
$\Partition_+$ and $\Partition_-$ are arranged in $\RR$
alternately.
The \lq union' of $\Partition_+$ and $\Partition_-$
determines a partition, $\Partition$, of $\CCC$ into (i) double
spiders, (ii) closed-up leaves disjoint from $\Parabolic$, and (iii)
one-point sets in $\RR$ disjoint from the endpoints of the closed-up
leaves.

The continuous map
$\RRR/\Partition \to \CCC/\Partition$
between the quotient spaces
induced by the inclusion $\RRR\to\CCC$
is surjective,
because every piece in $\Partition$ intersects $\RRR$.
Since the actions of the Fuchsian group
$\rho_{\RRR}(\pi_1(\OO))$ on $\RRR$ and $\CCC$ respects the
partition
$\Partition$,
both quotient spaces inherit $\pi_1(\OO)$-actions,
and the above surjective continuous map
is $\pi_1(\OO)$-equivariant.

As is shown
in
\cite[Appendix]{Cannon-Dicks1},
the
Moore
decomposition theorem
implies that the quotient space $\CCC/\Partition$
is homeomorphic to the $2$-sphere.
We denote this topological $2$-sphere with the $\pi_1(\OO)$-action
by $\Sphere^2$.
Then we have the following tower of
continuous $\pi_1(\OO)$-equivariant surjective maps:
\[
\RRR=\Lambda(\rho_{\RRR})\to
\RRR/\Partition \to
\CCC/\Partition=\Sphere^2.
\]
Let $\CTM:\RRR=\Lambda(\rho_{\RRR})\to
\CCC/\Partition=\Sphere^2$
be the composition of these surjective maps
and call it the \textit{model Cannon-Thurston map}.
The following theorem was proved by Bowditch \cite{Bowditch}.

\begin{theorem}[Bowditch]
\label{thm.Bowditch}
The model Cannon-Thurston map $\CTM$
gives a combinatorial model of the Cannon-Thurston map.
Namely, there is a $\pi_1(\OO)$-equivariant homeomorphism
$\Phi:\Sphere^2\to\Lambda(\rho_{\CCC})=\CCC$
such that $\CT=\Phi\circ\CTM$.
\end{theorem}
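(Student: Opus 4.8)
The theorem is due to Bowditch \cite{Bowditch}, and the plan is to organise a proof (equivalently, a faithful recollection of his argument) around the fibres of $\CT$. The first step is purely formal. Since $\RRR$ is compact and $\CCC$ is Hausdorff, the continuous surjection $\CT$ is a quotient map, so $\CCC$ is canonically $\RRR$ modulo the partition $\{\CT^{-1}(w):w\in\CCC\}$ into fibres. On the other hand $\CTM$ factors as $\RRR\to\RRR/\Partition\to\CCC/\Partition=\Sphere^2$, where the second map $q$ is induced by the inclusion $\RRR\hookrightarrow\CCC$; it is surjective, since every piece of $\Partition$ meets $\RRR$ as already noted, and it is injective, since if $P\cap\RRR$ and $P'\cap\RRR$ lie in a common piece of $\Partition$ then, being nonempty subsets of $P$ and of $P'$, they force $P=P'$. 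Hence $q$ is a homeomorphism, and $\CTM$ is a quotient map whose fibres are exactly the sets $P\cap\RRR$, $P\in\Partition$. Consequently the whole statement reduces to the single assertion
\[
\{\CT^{-1}(w):w\in\CCC\}=\{P\cap\RRR:P\in\Partition\}
\]
as partitions of $\RRR$: granting it, $\CT$ factors through a continuous bijection $\RRR/\Partition\to\CCC$, which is a homeomorphism (a continuous bijection from a compact space to a Hausdorff space), and composing with $q^{-1}$ yields the required $\Phi\colon\Sphere^2\to\CCC$ with $\CT=\Phi\circ\CTM$; the $\pi_1(\OO)$-equivariance of $\Phi$ is then inherited from that of $\CT$ and of $\CTM$.

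The content lies in the displayed equality, i.e.\ in describing which pairs of points of $\RRR=\Lambda(\rho_{\RRR})$ are glued by $\CT$. Here one exploits that the cover of $\Obundle$ determined by $\pi_1(\OO)\hookrightarrow\pi_1(\Obundle)$ is the doubly degenerate hyperbolic orbifold $\OO\times\RR$ of bounded geometry, whose two ending laminations are the stable and the unstable geodesic laminations of $\varphi$, of slopes $\mu_+$ and $\mu_-$. For such a group the Cannon--Thurston map exists, as recalled above via \cite{McMullen}, and Bowditch's analysis shows that $\CT$ identifies two distinct points of $\RRR$ exactly when they are the two ideal endpoints of a lifted leaf of the stable or of the unstable lamination, and that for each parabolic fixed point $p$ it collapses the ideal polygon spanned by all the lamination leaves spiralling into the corresponding cusp, and nothing more. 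It then remains to match this with $\Partition$: by the construction of Section~\ref{sec.Cannon-Thurston}, a closed-up leaf disjoint from $\Parabolic$ meets $\RRR$ in its two feet, a spider with head $p$ meets $\RRR$ in $p$ together with the feet of its legs, and, superimposing the two foliations, the double spider with head $p$ is precisely the cusp ideal polygon at $p$. Thus the nontrivial fibres of $\CT$ on $\RRR$ are exactly the two-foot sets of closed-up leaves disjoint from $\Parabolic$ and the head-and-feet sets of double spiders, that is, the sets $P\cap\RRR$ for non-singleton $P\in\Partition$, which gives the displayed equality.

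The main obstacle is plainly this last input: the identification of the fibres of $\CT$ with the closed-up leaves of the two laminations and with the cusp ideal polygons. This is precisely the theorem of Bowditch \cite{Bowditch}, proved by studying the coarse geometry of the surface-group manifold, electrified along the cusp and compared with Minsky's bounded-geometry model, and showing that two boundary points have equal image under the boundary extension iff they are joined by a leaf of an ending lamination or lie on a common cusp polygon; I would cite that result rather than reprove it, while noting that the explicit combinatorial route of \cite{Cannon-Dicks1, Cannon-Dicks2} furnishes an alternative. Everything else, namely the quotient-map bookkeeping of the first paragraph and the verification that the foliation pieces of $\Partition$ correspond, after closing up, to the lamination leaves and cusp polygons, is routine, using that $\mu_\pm$ are irrational, so that a generic leaf has two distinct ideal endpoints, and that the feet of $\Partition_+$ and of $\Partition_-$ interleave along $\RRR$.
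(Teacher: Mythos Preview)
The paper does not give its own proof of this theorem; it simply attributes the result to Bowditch and cites \cite{Bowditch}. Your proposal is consistent with that: you add the routine quotient-map bookkeeping showing that the statement reduces to identifying the fibres of $\CT$ with the traces $P\cap\RRR$ of the pieces of $\Partition$, and then, for that identification, you explicitly defer to Bowditch's theorem (or the alternative in \cite{Cannon-Dicks1, Cannon-Dicks2}). So your treatment and the paper's coincide in substance, with yours spelling out the formal reduction that the paper leaves implicit.
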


By the above theorem and the description of the model
Cannon-Thurston map, it follows that
the inverse image
$\CT^{-1}(x)$ of a point $x\in\CCC$
consists of one, two, or countably infinite points.
If $|\CT^{-1}(x)|=2$,
then $\CT^{-1}(x)$ consists of the endpoints
of a closed-up leaf of one of the two foliations.
If $|\CT^{-1}(x)|=\infty$, then
$x$ is a parabolic fixed point of $\rho_{\hat\CC}(\pi_1(\OO))$,
i.e., there is an element $D'$ of $\pi_1(\OO)$ conjugate to $D$
such that $x$ is the fixed point of
the parabolic transformation $\rho_{\hat\CC}(D')$.
In this case
$\CT^{-1}(x)$ consists of
the fixed point of $\rho_{\hat\RR}(D')$
and the feet of the double spider
having the point as the head.

\section{The fractal tessellation $CW(\varphi)$}
\label{sec.fractal-tessellation}

In this section, we review the construction of the
fractal tessellation $CW(\varphi)$
introduced in
\cite{Cannon-Dicks2}.
Recall that the Cannon-Thurston map is obtained by shrinking each
lifted closed-up
leaf
of the stable and unstable  foliations to a point.
Roughly speaking,
the fractal tessellation is obtained by looking at the intersection
of the foliations
of the pseudo-Anosov homeomorphism
$\varphi:T\to T$,
and it reflects the way the Cannon-Thurston map
fills in $\CCC$.

Let $\spider$ be the double spider with head $\infty$,
the parabolic fixed point of $\rho_{\RRR}(D)$,
and let
$\{\leg_m\}_{m\in\ZZ}$ be the legs of $\spider$,
and let $w_m$ be the foot of $\leg_m$.
We assume that
the elements of
$\{w_m\}$ are located in $\RR$ in increasing order
and that $\leg_m$ is contained in
$\bar\CC_+$ or $\bar\CC_-$ according as
$m$ is even or odd.
It should be noted that $\rho_{\RRR}(D)$
maps $\leg_m$ to $\leg_{m+2}$
and that $\CT^{-1}(\infty)=
\{w_m\}_{m\in\ZZ}\cup\{\infty\}$.
Thus $\RRR-\CT^{-1}(\infty)$
is the disjoint union
$\sqcup_{m\in\ZZ}(w_m,w_{m+1})$
of open intervals.

Let $\bar\leg_m\subset\CCC$ be the complex conjugate of $\leg_m$,
and let
$\hat\partial_m$ be the image of $\bar\leg_m$
under the quotient map $\CCC\to \CCC/\Partition=\Sphere^2$.
The surjective map $\bar\leg_m\to\hat\partial_m$
is the quotient map that identifies the two endpoints
$\infty$ and $w_m$
of $\bar\leg_m$ (\cite[Proposition 7.8]{Cannon-Dicks2}).
Thus $\hat\partial_m$ is a simple closed curve in $\Sphere^2$
passing through $\inftym$,
where $\inftym=\Phi^{-1}(\infty)\in \Sphere^2$.
Hence
$\partial_m:=\hat\partial_m-\{\inftym\}$
is homeomorphic to the real line
and is properly embedded in
the model complex plane, $\CCM:=\Sphere^2-\{\inftym\}$.

We orient $\partial_m$ as follows.
We first orient
the loop $\ell_m\cup \bar\ell_m$ so that
it proceeds from $\infty$ through $\CC_-$ to $w_m$ and through
$\CC_+$ back to $\infty$.
Since $\hat\partial_m$ is equal to the image of
the loop $\ell_m\cup \bar\ell_m$ in $\Sphere^2$,
it inherits an orientation from $\ell_m\cup \bar\ell_m$
and so does $\partial_m$.
The following proposition is proved in
\cite[Section 7]{Cannon-Dicks2}.

\begin{proposition}
\label{prop.CW-decomposition}
{\normalfont(1)} $\partial_m\cap\partial_{m'}=\emptyset$
whenever $|m-m'|>1$.

{\normalfont(2)} $\partial_m\cap\partial_{m+1}$
is a discrete subset of $\partial_m$
which accumulates at $\inftym$ from both
directions.

{\normalfont(3)} The closure of each component of $\CCM-\cup_{m\in\ZZ}\partial_
{m}$
is homeomorphic to the disk.
\end{proposition}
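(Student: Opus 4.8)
The plan is to transport everything to the $\Partition$-preimage, via the quotient map $q\colon\CCC\to\CCC/\Partition=\Sphere^2$. By \cite[Proposition 7.8]{Cannon-Dicks2}, $q$ restricted to $\bar\leg_m$ collapses exactly the two endpoints $\infty$ and $w_m$, both of which lie on the double spider $\spider$ with head $\infty$ and hence map to $\inftym$; so $q$ carries the open leg $\bar\leg_m\setminus\{\infty,w_m\}$ homeomorphically onto $\partial_m$, and this open leg is disjoint from $\spider=q^{-1}(\inftym)$. Therefore, for $m\ne m'$, a point of $\partial_m\cap\partial_{m'}$ other than $\inftym$ is a point $q(x)=q(y)$ with $x,y$ in the open legs of $\bar\leg_m$ and $\bar\leg_{m'}$ respectively, lying in a common piece $Q$ of $\Partition$; since the pieces are disjoint and $x,y\notin\spider$, necessarily $Q\ne\spider$. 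The facts I will lean on are: (a) distinct pieces of $\Partition$ are disjoint, so every leg of a double-spider piece $Q\ne\spider$ is disjoint from every leg $\leg_k$ of $\spider$; (b) regarding the closed half-plane $\bar\CC_\pm$ as a disk with boundary circle $\RRR$, two properly embedded arcs with endpoints on $\RRR$ meet iff their endpoint pairs interleave; (c) the feet $w_m$ lie in $\RR$ in strictly increasing order with $\rho_{\RRR}(D)\colon w_m\mapsto w_{m+2}$, and $\leg_m$ lies in $\bar\CC_+$ or $\bar\CC_-$, and the open leg of $\bar\leg_m$ in $\CC_-$ or $\CC_+$, according as $m$ is even or odd.

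For (1), suppose $|m-m'|>1$ and a piece $Q\ne\spider$ meets the open legs of both $\bar\leg_m$ and $\bar\leg_{m'}$. A one-point piece is impossible, since those open legs are disjoint for $m\ne m'$. If $Q$ is a closed-up leaf it lies in a single half-plane, so $m,m'$ share a parity, and writing its endpoints as $r,r'$ the open real interval they bound must contain both $w_m$ and $w_{m'}$; since $|m-m'|\ge 2$ it then also contains a foot $w_k$ of $\spider$ of the opposite parity, and the leg $\leg_k$ (from $\infty$ to $w_k$, in the same half-plane as $Q$) has endpoints interleaving $\{r,r'\}$, so it meets $Q$ by (b) — contradicting (a). If $Q$ is a double spider with head $p\ne\infty$, it meets the open leg of $\bar\leg_m$ through a leg $L$ of $Q$ whose endpoints $\{p,u\}$ interleave $\{\infty,w_m\}$; as $L$ is disjoint from the legs of $\spider$ lying in that half-plane, $\{p,u\}$ interleaves no $\{\infty,w_k\}$ with $\leg_k$ of the matching parity, which confines $p$ and $u$ to the interval bounded by the two feet of $\spider$ of that parity neighbouring $w_m$; the same at $\bar\leg_{m'}$ confines $p$ to an analogous interval around $w_{m'}$. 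A short check with the ordering (c) shows these two intervals are disjoint once $|m-m'|>1$, so $p$ cannot lie in both. Hence $\partial_m\cap\partial_{m'}=\emptyset$.

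For (2), fix $m$. The open legs of $\bar\leg_m$ and $\bar\leg_{m+1}$ lie in opposite open half-planes, so a piece meeting both must meet $\CC_+$ and $\CC_-$, hence be a double spider $Q$ with head $p\ne\infty$; since $q$ collapses all of $Q$ to the single point $q(Q)$, each such $Q$ contributes exactly one point of $\partial_m\cap\partial_{m+1}$ and distinct $Q$ contribute distinct points. Running the pinning of (1) at both $\bar\leg_m$ and $\bar\leg_{m+1}$ confines $p$ to $(w_m,w_{m+1})$, the $\bar\leg_m$-crossing leg of $Q$ to the sector bounded by $\leg_{m-1},\leg_{m+1}$, and the $\bar\leg_{m+1}$-crossing leg to the sector bounded by $\leg_m,\leg_{m+2}$. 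From this confinement, together with the facts that the feet of $Q$ accumulate only at $p$ and that the $Q$'s are pairwise disjoint, one argues that only finitely many such $Q$ have their $\bar\leg_m$-crossing in a prescribed compact sub-arc of the open leg of $\bar\leg_m$; that is, $\partial_m\cap\partial_{m+1}$ is discrete away from $\inftym$. That it accumulates at $\inftym$ from both directions is equivariance: $\rho_{\RRR}(D)$, parabolic at $\infty$, carries $\bar\leg_m$ to $\bar\leg_{m+2}$ and permutes the double spiders, driving any one crossing toward the endpoint $\infty$, while double spiders with head approaching $w_m$ push crossings toward the other endpoint $w_m$ — and both endpoints map to $\inftym$. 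Non-emptiness of $\partial_m\cap\partial_{m+1}$ (at least one crossing on each side) is the one point where I would invoke Bowditch's model (Theorem~\ref{thm.Bowditch}): it forces the feet of a suitable double spider with head in $(w_m,w_{m+1})$ to interlace $w_m$ and $w_{m+1}$, producing the required crossing.

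Finally, for (3) I would apply Moore's theorem region by region. Parts (1) and (2) render the intersection pattern of the $\partial_m$ purely combinatorial, from which each connected component $W$ of $\CCM\setminus\bigcup_m\partial_m$ is seen to be bounded by arcs of exactly two consecutive curves $\partial_m,\partial_{m+1}$ between two consecutive points of $\partial_m\cap\partial_{m+1}$ (with $\inftym$ as a possible extra frontier point). Pulling back, $q^{-1}(\overline W)$ is a closed region in $\CCC$ whose frontier is a simple closed curve built from an arc of $\bar\leg_m$, an arc of $\bar\leg_{m+1}$, and — near $\infty$ — legs of $\spider$, hence a closed Jordan domain; the decomposition it inherits from $\Partition$ is upper semicontinuous with non-degenerate members that are closed-up leaves (arcs) or traces of double spiders (dendrites), none of which separates the domain. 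Moore's theorem then identifies the quotient, which is $\overline W$, with a closed disk. I expect the genuine obstacle of the whole proposition to lie precisely here, and in the discreteness/non-emptiness claims of (2): showing that every complementary region pulls back to a Jordan domain of exactly this shape, and that the crossings $\partial_m\cap\partial_{m+1}$ behave as claimed, is the combinatorial core carried out in \cite[Section 7]{Cannon-Dicks2}, and the reductions above only bring the problem to that bookkeeping.
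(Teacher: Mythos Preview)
The paper does not prove this proposition at all: it is stated with the sentence ``The following proposition is proved in \cite[Section 7]{Cannon-Dicks2}'' immediately preceding it, and the authors simply invoke that reference. So there is no in-paper argument to compare against; your proposal is an attempt to reconstruct a proof that the paper deliberately outsources.

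Your outline is broadly the right shape --- pull back along the quotient $q$, analyse which pieces of $\Partition$ can meet two distinct open legs $\bar\ell_m\setminus\{\infty,w_m\}$, and reduce to the interleaving combinatorics of endpoints on $\RRR$ --- and you are honest that the hard combinatorial core in (2) and (3) is precisely what \cite[Section 7]{Cannon-Dicks2} carries out. One concrete gap to flag: your accumulation argument in (2) via the $D$-action does not work as written. Since $\rho_{\RRR}(D)$ carries $\bar\ell_m$ to $\bar\ell_{m+2}$, it maps $\partial_m\cap\partial_{m+1}$ to $\partial_{m+2}\cap\partial_{m+3}$, not to itself, so iterating $D$ does not produce a sequence of crossings \emph{on $\partial_m$} tending to $\infty_{\mathcal P}$. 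To get infinitely many crossings on a fixed $\partial_m$ one needs either the explicit combinatorial indexing of the vertex set (as in Proposition~\ref{prop.verticesCW}, which is exactly how \cite{Cannon-Dicks2} proceeds) or a different geometric mechanism, e.g.\ exhibiting infinitely many distinct double spiders with head in $(w_m,w_{m+1})$ whose legs cross both $\bar\ell_m$ and $\bar\ell_{m+1}$. Similarly, the ``short check'' in (1) for the double-spider case and the discreteness claim in (2) are sketched rather than argued; these are not wrong in spirit, but they are where the actual work of \cite[Section 7]{Cannon-Dicks2} lives.
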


By the first and the second assertions of
Proposition \ref{prop.CW-decomposition},
$\partial_{m-1}\cap\partial_{m}$ and
$\partial_{m}\cap\partial_{m+1}$
are mutually disjoint isolated subsets of the line
$\partial_{m}$, and their union
forms the vertex set of a CW-decomposition of
$\partial_{m}$.
By the last assertion of
Proposition \ref{prop.CW-decomposition},
the union of the $1$-dimensional cell complexes
$\{\partial_m\}_{m\in\ZZ}$
etches a $\langle D \rangle$-invariant
CW-decomposition of $\CCM$ satisfying the following conditions.
\begin{enumerate}[(i)]
\item
The vertex set is $\cup_{m\in\ZZ}(\partial_m\cap\partial_{m+1})$.
\item
The edge set is the set of the closures
of components of $\partial_m-(\partial_{m-1}\cup \partial_{m+1})$
where $m$ runs over $\ZZ$.
\item
The face set is the set of the closures of
the components of $\CCM-\cup_{m\in\ZZ}\partial_m$.
\end{enumerate}
We denote this CW-decomposition of $\CCM$ by $CW(\varphi)$.
Note that the $\pi_1(\OO)$-equivariant homeomorphism
$\Phi:\Sphere^2\to\Lambda(\rho_{\CCC})=\CCC$
restricts to a $\langle D \rangle$-equivariant homeomorphism
$\CCM\to \CC$.
We identify the $\langle D \rangle$-space $\CC$
with $\CCM$ through this homeomorphism.
Then the $\langle D \rangle$-invariant
CW-decomposition $CW(\varphi)$
of $\CCM$ determines a $\langle D \rangle$-invariant
CW-decomposition of $\CC$, which we continue to denote
by the same symbol $CW(\varphi)$.
We also continue to denote the image of $\partial_m$ in $\CC$
by the same symbol.

We give a structure of a colored CW-complex to $CW(\varphi)$,
in the sense of Definition \ref{colored complex} (1) below,
by declaring that the vertices and the faces
which lie between $\partial_{m}$ and $\partial_{m+1}$
are white or gray according as $m$ is even or odd.
Thus $CW(\varphi)$ admits a structure of
colored $\langle D \rangle$-CW-complex
in the sense of Definition \ref{colored complex} (3) below.

\begin{definition}
\label{colored complex}
\rm
(1) By a \textit{colored CW-complex}, we mean a
$2$-dimen\-sional
CW-complex in which each vertex and
each open $2$-cell is assigned a value in the set
$\{\mbox{white, gray}\}$.

(2) A \textit{color-preserving CW-isomorphism}
is a homeomorphism between colored CW-complexes
which carries cells homeomorphically to cells,
and respects the colorings.
Two colored CW-complexes are said to be \textit{isomorphic}
if there is a color-preserving CW-isomorphism between them.

(3) For a group $G$, a \textit{colored $G$-CW-complex}
is a colored CW-complex $W$ together with an action by $G$
such that each element of $G$ acts as a color-preserving
$CW$-automorphism of $W$.
An \textit{isomorphism} between two colored $G$-CW-complexes
is defined to be a $G$-equivariant
color-preserving CW-isomorphism between them.
\end{definition}

The following
result
shows that this colored CW-complex
$CW(\varphi)$ reflects the way the Cannon-Thurston map $\CT$
fills in $\CCC$ (see \cite[Introduction and Proposition 7.7]{Cannon-Dicks2}).

\begin{proposition}
The image of the open interval $(w_m,w_{m+1})$
by the Cannon-Thurston map $\CT$ is
equal to the closed region bounded by
$\partial_{m}$ and $\partial_{m+1}$.
Moreover, $\CT$ fills in the region
(gradually) from top to bottom
or from bottom to top,
according as the region is white or gray.
\end{proposition}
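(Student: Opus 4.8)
The plan is to transport the statement to the combinatorial model of Section~\ref{sec.Cannon-Thurston} via Bowditch's theorem, and then to read both assertions off from the way the Moore quotient map collapses the double spider $\spider$. First, by Theorem~\ref{thm.Bowditch} we have $\CT=\Phi\circ\CTM$ for a $\pi_1(\OO)$-equivariant homeomorphism $\Phi\colon\Sphere^2\to\CCC$ with $\Phi(\inftym)=\infty$; since the curves $\partial_m\subset\CC$ are by definition the $\Phi$-images of the curves $\partial_m\subset\CCM$, the homeomorphism $\Phi$ carries $CW(\varphi)\subset\CCM$ onto $CW(\varphi)\subset\CC$, preserving colours and the order $\cdots,\partial_{m-1},\partial_m,\partial_{m+1},\cdots$ in which the curves are stacked. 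Hence it is enough to prove both assertions with $\CT$, $\CC$ and $\partial_m$ replaced by $\CTM$, $\CCM$ and $\partial_m\subset\CCM$. Let $q\colon\CCC\to\CCC/\Partition=\Sphere^2$ be the quotient map, so that $\CTM=q|_{\RRR}$. Since $\CTM^{-1}(\inftym)=\{w_j\}_{j\in\ZZ}\cup\{\infty\}$ --- this is the computation of $\CT^{-1}(\infty)$ recalled in Section~\ref{sec.Cannon-Thurston} --- and the $w_j$ accumulate at $\infty$ from both sides, the set $\RRR\setminus\CTM^{-1}(\inftym)$ is exactly $\bigsqcup_{m}(w_m,w_{m+1})$; each open arc is mapped by $\CTM$ into $\CCM$, and $\CTM([w_m,w_{m+1}])=\CTM((w_m,w_{m+1}))\cup\{\inftym\}$ is compact, so $\CTM((w_m,w_{m+1}))$ is closed in $\CCM$ (which is why it is a ``closed region'').

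Write $R_m\subset\CCM$ for the closure of the union of the open $2$-cells of $CW(\varphi)$ lying between $\partial_m$ and $\partial_{m+1}$, so $R_m$ is the closed region bounded by $\partial_m\cup\partial_{m+1}$. The crux is the identity $\CTM((w_m,w_{m+1}))=R_m$, which I would prove by two inclusions. For ``$\supseteq$'': $q$ maps $\RRR$ onto all of $\Sphere^2$, and so it suffices to verify that every $\Partition$-piece whose $q$-image meets $R_m$ already meets $[w_m,w_{m+1}]$; this is a short case-check over the three types of pieces of $\Partition$, locating the relevant $2$-cells by means of Proposition~\ref{prop.CW-decomposition}. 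For ``$\subseteq$'': fix $z\in(w_m,w_{m+1})$ and let $X$ be its $\Partition$-piece; then $X$ is disjoint from $\spider$, hence $X$ is a one-point set, a closed-up leaf of one of the two foliations disjoint from $\Parabolic$, or a double spider with head in $(w_m,w_{m+1})$. Using the recalled fact that $q$ restricted to $\bar\leg_j$ identifies only its two endpoints, together with the elementary remark that the straight rays of slopes $\mu_{\pm}$ in $\RR^2-\ZZ^2$ underlying $X$ and those underlying $\spider$ meet only where forced (parallel rays not at all, rays of distinct slopes in at most one point), one shows that $X$ is trapped between $\bar\leg_m$ and $\bar\leg_{m+1}$, whence $q(z)\in R_m$. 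Combining the two inclusions yields the first assertion.

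For the second assertion the direction of the filling is dictated by the orientation of $\partial_m$ together with the parity of $m$. Recall that $\partial_m$ is oriented by traversing the loop $\leg_m\cup\bar\leg_m$ from $\infty$ through $\CC_-$ to $w_m$ and through $\CC_+$ back to $\infty$, and that $\leg_m\subset\bar\CC_+$ precisely when $m$ is even, which is precisely when $R_m$ is white; thus the parity of $m$ simultaneously fixes the colour of $R_m$ and interchanges which of $\leg_m,\bar\leg_m$ lies in $\CC_+$. I would then parametrise the arc $[w_m,w_{m+1}]$ and follow its image under $q$ through the chain of $2$-cells constituting $R_m$, verifying that $R_m$ is swept out monotonically with respect to a coordinate transverse to the curves $\partial_j$ and that replacing $m$ by $m+1$ reverses the sweep --- which is exactly the stated dichotomy, white corresponding to top-to-bottom and gray to bottom-to-top.

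The step I expect to be the main obstacle is the combination of the ``$\subseteq$'' inclusion of the first assertion with the monotonicity in the second: both rest on a careful local analysis of $q$ near the head $\infty$ of $\spider$ and near the curves $\partial_{m-1},\dots,\partial_{m+2}$, which is the planar-combinatorial heart of \cite[Section~7]{Cannon-Dicks2}. Once that local picture is available, the global statement follows by $\langle D\rangle$-equivariance and the accumulation properties of Proposition~\ref{prop.CW-decomposition}, and the whole proposition becomes a reformulation of \cite[Introduction and Proposition~7.7]{Cannon-Dicks2} in the notation of the present paper.
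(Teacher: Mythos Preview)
The paper does not give its own proof of this proposition; it is stated with a citation to \cite[Introduction and Proposition~7.7]{Cannon-Dicks2} and left at that. Your proposal correctly identifies this and, rather than offering an independent argument, sketches the reduction to the combinatorial model via Theorem~\ref{thm.Bowditch} and then defers the substantive local analysis of the Moore quotient near the double spider to \cite[Section~7]{Cannon-Dicks2}. In that sense your approach coincides with the paper's: both ultimately rest on the cited result. Your outline of the two inclusions and the parity/orientation bookkeeping for the filling direction is a faithful summary of what the cited proof does, and you are right that the genuine work (the ``$\subseteq$'' inclusion and the monotone sweeping) lives in the planar combinatorics of \cite{Cannon-Dicks2} rather than in anything developed in the present paper.
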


Next, we give an explicit description of the vertex set
of $CW(\varphi)$.
Recall the
bi-infinite word $\Omega=\prod f_n$
in the letters $\{R,L\}$ in Section \ref{sec.farey}.

\begin{definition}
\label{def.recursiveFF}
\rm
Let $\FF_n$ $(n\in\ZZ)$ be the bi-infinite sequence
of automorphisms
of $\pi_1(\OO)$
defined by the following rules:
\begin{equation*}
\FF_0 = 1, \quad
\FF_n= \FF_{n-1}f_n.
\end{equation*}
\end{definition}
Then the following proposition holds
(see \cite[Definition 7.13]{Cannon-Dicks2}).

\begin{proposition}
\label{prop.CW-vertices}
{\normalfont (1)} The vertex set of $\partial_0$
is equal to
\[
\{\rho_{\CCC}(\FF_n(B))(\infty)\}_{n\in\ZZ},
\]
in increasing order.
More generally,
the vertex set of $\partial_{2m}$
is equal to
\[
\{\rho_{\CCC}(\inn{D^m}\FF_n(B))(\infty)\}_{n\in\ZZ}
=\{m+\rho_{\CCC}(\FF_n(B))(\infty)\}_{n\in\ZZ}
\]
in increasing order.

{\normalfont (2)}
The vertex set of $\partial_1$
is equal to
\[
\{\rho_{\CCC}(\FF_n \inn{C}(B))(\infty)\}_{n\in\ZZ},
\]
in increasing order.
More generally,
the vertex set of $\partial_{2m+1}$
is equal to
\[
\{\rho_{\CCC}(\inn{D^m}\FF_n\inn{C}(B))(\infty)\}_{n\in\ZZ}
=\{m+\rho_{\CCC}(\FF_n\inn{C}(B))(\infty)\}_{n\in\ZZ}
\]
in increasing order.
\end{proposition}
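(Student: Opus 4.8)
The plan is to prove Proposition \ref{prop.CW-vertices} by combining the combinatorial description of the model Cannon-Thurston map (Theorem \ref{thm.Bowditch}) with the structure of the double spider $\spider$ and its legs, tracking how the feet of $\spider$ are organized by the Farey-triangle sequence $\{\sigma_n\}$. Since the second (``more generally'') statement in each part follows from the first by applying $\rho_{\CCC}(D^m)$, which acts as $z\mapsto z+m$ by the normalization \eqref{NormalizingD} and which carries $\leg_m$ to $\leg_{m+2}$, the heart of the matter is to identify the vertex sets of $\partial_0$ and $\partial_1$. Recall that $\partial_0$ is (the image in $\CCM\cong\CC$ of) the closed-up leg $\bar\leg_0\subset\bar\CC_-$ of the double spider with head $\infty$, and its vertices are exactly the points of $\partial_0\cap\partial_{-1}$ together with $\partial_0\cap\partial_1$, which by the analysis in \cite[Section 7]{Cannon-Dicks2} correspond to the feet $w_m$ of the neighbouring legs, equivalently to the parabolic fixed points through which the leaves of the two foliations pass.

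First I would set up the dictionary between the feet of the double spider and elliptic generators. The leg $\leg_0$ runs from $\infty$ along a ray of slope $\mu_+$ (or $\mu_-$) in the cover $\RR^2-\ZZ^2$; its closed-up version hits the real boundary $\RRR$ at a parabolic fixed point, and the successive parabolic fixed points encountered as one moves the oriented leaf past the Farey triangles $\sigma_n$ are precisely $\rho_{\RRR}(\FF_n(B))(\infty)$ for $n\in\ZZ$ — this is where Definition \ref{def.recursiveFF} and Convention \ref{convention.Farey} enter, since $\FF_n = f_1 f_2\cdots f_n$ records the cumulative ``turning'' of the geodesic $\ell$ across the Farey tessellation, and the distinguished elliptic generator picking out the relevant edge at each stage is the one of the appropriate slope, which turns out to be $B$ after applying $\FF_n$. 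The key computation is a one-step induction: assuming the vertex of $\partial_0$ coming from $\sigma_{n-1}$ is $\rho_{\CCC}(\FF_{n-1}(B))(\infty)$, the diagonal-exchange move from $\sigma_{n-1}$ to $\sigma_n$ is governed by the automorphism $f_n\in\{R,L\}$ defined in \eqref{DefiningRL}, and by Lemma \ref{lemma.RL} and Proposition (2) the next vertex is $\rho_{\CCC}(\FF_{n-1}f_n(B))(\infty) = \rho_{\CCC}(\FF_n(B))(\infty)$. The increasing-order claim follows because $\rho_{\RRR}$ is a Fuchsian representation in which these parabolic points genuinely lie in $\RR$ in this order, and the real order is inherited by $\partial_0$ via its orientation coming from the loop $\ell_0\cup\bar\ell_0$ (proceeding from $\infty$ through $\CC_-$ to $w_0$ and back through $\CC_+$).

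For part (2), I would argue identically but starting from $\sigma_0 = \langle 0,\infty,-1\rangle$ rather than $\sigma_1 = \langle 0,1,\infty\rangle$: the leg $\leg_1$ lies in the opposite half-plane and is adjacent to $\leg_0$, so its relevant initial elliptic generator is the one of the correct slope with respect to $\sigma_0$, and by Lemma \ref{lemma.RL}(2) the elliptic generator triple associated with $\sigma_0$ is $(A, C, \inn{C}(B))$; the member playing the role of $B$ there is $\inn{C}(B)$. Hence the vertices of $\partial_1$ are $\rho_{\CCC}(\FF_n\inn{C}(B))(\infty)$ (note the composition order: the same cumulative word $\FF_n$ is applied, but to $\inn{C}(B)$ rather than $B$, matching the fact that $\partial_1$ and $\partial_0$ are translated versions under the fiber monodromy once one passes to $\partial_2$, $\partial_3$, etc.). The relation $(LR^{-1})^3 = \inn{D}$ from Lemma \ref{lemma.RL}(3), together with periodicity $\Omega = (\prod_{n=1}^p f_n)^\infty$, guarantees that the indexing by $n\in\ZZ$ is consistent and that the whole picture is $\langle D\rangle$-equivariant.

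The main obstacle I anticipate is pinning down the precise bookkeeping of which elliptic generator — $B$ versus $\inn{C}(B)$, and with what index shift — labels the foot of each leg as a function of $n$, i.e.\ making the inductive step watertight rather than merely plausible. This requires carefully matching three things: the orientation conventions on the legs $\leg_m$ (fixed just before Proposition \ref{prop.CW-decomposition}), the orientation of the Farey triangles in Convention \ref{convention.Farey}, and the definition \eqref{DefiningRL} of $R$ and $L$ as acting on generator triples. One subtle point is that $\FF_n$ is built by \emph{right} multiplication ($\FF_n = \FF_{n-1}f_n$), so when these automorphisms are applied to the fixed generator $B$ the effect is genuinely cumulative in the correct direction; verifying that $\rho_{\RRR}(\FF_n(B))(\infty)$ is indeed the parabolic point where the $n$-th Farey triangle's relevant edge meets $\RRR$, and that successive such points are adjacent feet of the spider (no skipping, no repetition), is the crux. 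Once that identification is in hand, everything else — the ``more generally'' translation by $m$, the increasing order, and the colored $\langle D\rangle$-structure — is routine, and indeed this is essentially a restatement and reorganization of \cite[Definition 7.13]{Cannon-Dicks2} in the language of Section \ref{sec.orbifold}, so the proof can legitimately lean on that reference for the analytic input while supplying the combinatorial translation here.
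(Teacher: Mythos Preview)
The paper does not give a proof of this proposition at all: it simply cites \cite[Definition~7.13]{Cannon-Dicks2} as the source, and the subsequent Remark only verifies that the formulation here agrees with the one there (via the computation $\rho_{\CCC}(\inn{D}\FF_n(AC))(\infty)=\rho_{\CCC}(\FF_n\inn{C}(B))(\infty)$). You correctly recognize this in your final sentence, so in that sense your proposal and the paper are aligned---both defer the analytic content to \cite{Cannon-Dicks2}.

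Where you go further than the paper is in sketching what the argument behind \cite[Definition~7.13]{Cannon-Dicks2} ought to look like. That sketch is broadly reasonable in spirit, but it is vague at exactly the point you flag as the ``main obstacle'': the identification of the vertices of $\partial_0$ with the points $\rho_{\CCC}(\FF_n(B))(\infty)$ is not really an induction on Farey triangles in the way you describe. In \cite{Cannon-Dicks2} the vertices arise as intersection points $\partial_m\cap\partial_{m\pm1}$, and these are analyzed via the ladder structure of the Cannon--Thurston map and the explicit combinatorics of how the complex-conjugate legs $\bar\leg_m$ meet after quotienting by $\Partition$; the elliptic generators $\FF_n(B)$ and $\FF_n\inn{C}(B)$ enter because they label the pivot edges of the canonical triangulation, not directly as ``the parabolic point where the $n$-th Farey triangle's relevant edge meets $\RRR$.'' Your one-step induction using \eqref{DefiningRL} does not by itself establish that these points are \emph{all} the intersection points and in the stated order---that requires the geometric input from \cite{Cannon-Dicks2}. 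Since the paper itself is content to cite that input, your proposal is acceptable for the purposes of this paper, but you should not present the sketch as a self-contained proof.
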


\begin{remark}\rm
(1) In the above proposition,
$B$ and $C$ are elliptic generators
as  in Lemma \ref{lemma.RL},
and the identities among the sets follow from the
identity (\ref{trasition-equation}).

(2)
In \cite[Definition 7.13]{Cannon-Dicks2},
the vertex
\[
\rho_{\CCC}(\inn{D^m}\FF_n\inn{C}(B))(\infty)
=\rho_{\CCC}(\FF_n\inn{C}(B))(\infty)+m
\]
is described as
\[
\rho_{\CCC}(\inn{D^{m+1}}\FF_n(AC))(\infty)
=\rho_{\CCC}(\inn{D}\FF_n(AC))(\infty)+m.
\]
These points coincide,
because
\begin{align*}
\rho_{\CCC}(\inn{D}\FF_n(AC))(\infty)
&=\rho_{\CCC}(\FF_n\inn{D}(AC))(\infty)\\
&=\rho_{\CCC}(\FF_n(CBCD^{-1}))(\infty)\\
&=\rho_{\CCC}(\FF_n\inn{C}(B))(\infty).
\end{align*}
\end{remark}

In order to describe
how the adjacent vertical lines
$\partial_m$ and $\partial_{m+1}$ intersect each other,
we prepare the following notation.
\begin{definition}
\label{definition.P(m,n)}
\rm

(1) For each $(m,n)\in\ZZ^2$,
let $P_{m,n}$ be the elliptic generator defined by the following formulas:
\[
P_{2m,n}:= \inn{D^m}\FF_{n-1}(B) ,\quad
P_{2m+1,n}:= \inn{D^m}\FF_{n} \inn{C}(B).
\]

(2) For each $(m,n)\in\ZZ^2$,
let $p_{m,n}$ be the point in $\CC$ defined by the following formula:
\[
p_{m,n}:=\rho_{\CCC}(P_{m,n})(\infty).
\]

(3) For each $n\in\ZZ$, set:
\begin{eqnarray}
n_+&:=&\min\{k\in\ZZ\mid \mbox{$f_k=f_n$ and $k>n$}\}, \nonumber
\\
n_-&:=&\max\{k\in\ZZ\mid \mbox{$f_k=f_n$ and $k<n$}\}. \nonumber
\end{eqnarray}
It should be noted that $(n_+)_-=n=(n_-)_+$ and
$f_{n_+}=f_n=f_{n_-}$.
\end{definition}

Then we have the following
(see \cite[Definition 7.13 (iii)]{Cannon-Dicks2}).

\begin{proposition}
\label{prop.verticesCW}
{\normalfont(1)} For each $m\in\ZZ$, the set $\{p_{m,n}\}_{n\in\ZZ}$
forms the vertex set of $\partial_{m}$.

{\normalfont(2)} The intersection of two mutually adjacent
elements of $\{\partial_{m}\}$
is as follows.
\begin{align*}
 \partial_{2m-1}\cap \partial_{2m}
&&=&& \{p_{2m,n}\mid f_{n}=L\} &&=&& \{p_{2m-1,n}\mid f_n=L\},&&
\\
\partial_{2m}\cap \partial_{2m+1}
&&=&& \{p_{2m,n}\mid f_{n}=R\} &&=&& \{p_{2m+1,n}\mid f_n=R\}.&&
\end{align*}
Moreover, we have the following identities
among the vertices:
\[
p_{2m,n}
=
\begin{cases}
p_{2m+1,n_+}
& \mbox{if $f_n=R$,}\\
p_{2m-1,n_+}
& \mbox{if $f_n=L$.}
\end{cases}
\]
Equivalently, we have the following:
\[
p_{2m+1,n}
=
\begin{cases}
p_{2m,n_-}
& \mbox{if $f_n=R$,}\\
p_{2m+2,n_-}
& \mbox{if $f_n=L$.}
\end{cases}
\]
\end{proposition}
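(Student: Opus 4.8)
The plan is to deduce Proposition~\ref{prop.verticesCW} from the already-stated combinatorial data, principally Proposition~\ref{prop.CW-vertices} (equivalently Proposition~\ref{prop.CW-decomposition} and Definition~\ref{definition.P(m,n)}), Lemma~\ref{lemma.RL}, and the transition identity \eqref{trasition-equation}. First I would dispose of part~(1): by Definition~\ref{definition.P(m,n)}(1)--(2) we have $p_{2m,n}=\rho_{\CCC}(\inn{D^m}\FF_{n-1}(B))(\infty)$, which by \eqref{trasition-equation} equals $m+\rho_{\CCC}(\FF_{n-1}(B))(\infty)$; reindexing $n\mapsto n-1$ in Proposition~\ref{prop.CW-vertices}(1) identifies $\{p_{2m,n}\}_{n\in\ZZ}$ with the vertex set of $\partial_{2m}$, and similarly $p_{2m+1,n}=m+\rho_{\CCC}(\FF_n\inn{C}(B))(\infty)$ matches the vertex set of $\partial_{2m+1}$ via Proposition~\ref{prop.CW-vertices}(2). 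So part~(1) is essentially a bookkeeping translation of the earlier statement into the $P_{m,n}$ notation.

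The heart of the proof is part~(2): describing when two consecutive lines $\partial_m,\partial_{m+1}$ actually share a vertex, and matching up the indices. By the translation action of $\langle D\rangle$ (which sends $\partial_k$ to $\partial_{k+2}$ and $p_{k,n}$ to $p_{k+2,n}$) it suffices to treat, say, $\partial_0\cap\partial_1$ and $\partial_1\cap\partial_2$; everything else follows by applying $\inn{D^m}$. I would compute $p_{0,n}=\rho_{\CCC}(\FF_{n-1}(B))(\infty)$ and $p_{1,n'}=\rho_{\CCC}(\FF_{n'}\inn{C}(B))(\infty)$, and ask when these coincide. The key algebraic fact to extract is the effect of the right and left generators on $B$: from \eqref{DefiningRL}, $R(B)=\inn{B}(C)$ and $L(B)=\inn{B}(A)$, and one checks (using $D=CBA$, $A^2=B^2=C^2=1$) how $\FF_n(B)$ relates to $\FF_{n-1}(B)$ depending on whether $f_n=R$ or $f_n=L$ — the point being that $\FF_n = \FF_{n-1}f_n$, so $\FF_n(B)=\FF_{n-1}(f_n(B))$. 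A short computation should show that $\FF_{n}\inn{C}(B)$ is conjugate (by a power of $D$, after applying $\rho_{\CCC}$ and evaluating at $\infty$) to $\FF_{n_+-1}(B)$ precisely when $f_n=R$, and to a vertex one step up or down otherwise. Then the claimed identity $p_{2m,n}=p_{2m+1,n_+}$ when $f_n=R$ (resp.\ $p_{2m-1,n_+}$ when $f_n=L$) falls out, and the ``equivalently'' reformulation is obtained by replacing $n$ with $n_-$ and using $(n_-)_+=n$.

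I would then argue that these are \emph{all} the intersection points. Here I would invoke Proposition~\ref{prop.CW-decomposition}(2): $\partial_m\cap\partial_{m+1}$ is a discrete set accumulating at $\inftym$ from both sides; since the candidate sets $\{p_{2m,n}\mid f_n=R\}$ etc.\ are already infinite in both directions (the word $\Omega$ contains infinitely many $R$'s and $L$'s by Convention~\ref{convention.Farey}), showing the displayed inclusions are equalities is a matter of showing no vertex of $\partial_{m+1}$ indexed by an $n$ with $f_n\neq f_{n+1}$-compatible letter can lie on $\partial_m$ — which follows from the foliation picture underlying Proposition~\ref{prop.CW-decomposition} (a leg $\leg_n$ reaches the head of $\leg_{n'}$, i.e.\ the spiders genuinely touch, only in the patterns dictated by the $RL$-sequence), together with the injectivity of $n\mapsto p_{m,n}$ given by part~(1) and the strict monotonicity of the real parts.

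The main obstacle I anticipate is the bookkeeping in the middle paragraph: correctly tracking how $f_n(B)$ unfolds under the composite automorphism $\FF_{n-1}$, and in particular pinning down the exact power of $D$ that appears so that, after $\rho_{\CCC}$ and evaluation at $\infty$, the conjugation collapses to an honest equality of points rather than a translate. The identities in Lemma~\ref{lemma.RL}(3) — especially $LR^{-1}(P_j)=P_{j+1}$ and $(LR^{-1})^3=\inn{D}$ — together with the Remark following Proposition~\ref{prop.CW-vertices} (which already reconciles two different-looking expressions for the same vertex) should supply exactly the bridging lemmas needed, so the computation is routine but error-prone; I would organize it as a single lemma relating $\FF_n\inn{C}(B)$ to $\FF_{n_+-1}(B)$ and $\FF_{n_--1}(B)$ and then read off both displayed case analyses from it.
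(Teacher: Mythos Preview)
The paper itself does not prove this proposition: it is quoted from \cite[Definition~7.13(iii)]{Cannon-Dicks2} with no argument given, so there is no proof here to compare against directly. That said, the displayed identities $P_{2m,n}=P_{2m\pm1,n_+}$ \emph{are} reproved later (Lemma~\ref{lemma.P(m,n)vsQ(m,n)2}), and there the paper takes a different route from yours: it introduces the auxiliary generators $Q_{m,n}=\FF_{n-1}(Q_{m,1})$, works out their layer-to-layer relations (Lemma~\ref{lemma.Q(m,n)}(2)), locates each $P_{m,n}$ among the $Q$'s (Lemma~\ref{lemma.P(m,n)vsQ(m,n)}), and then reads off the identities by chaining these relations across the block of $L$'s (or $R$'s) between $n$ and $n_+$. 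Your direct computation with $R$ and $L$ acting on $\inn{C}(B)$ is correct and in fact shorter than you feared: from \eqref{DefiningRL} one has $R(\inn{C}(B))=C$, $L(C)=C$, $R(C)=B$, hence $RL^kR(\inn{C}(B))=B$ for every $k\ge0$, giving $\FF_{n_+}(\inn{C}(B))=\FF_{n-1}(B)$ when $f_n=R$; and $L(\inn{C}(B))=\inn{D}(A)$ (Lemma~\ref{lemma.RL}(3)), $R(A)=A$, $L(A)=B$ give $LR^kL(\inn{C}(B))=\inn{D}(B)$, which is exactly the $f_n=L$ case. No stray power of $D$ appears beyond the one already visible. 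What the $Q_{m,n}$ framework buys the paper is a uniform bookkeeping device that also drives the rest of Section~\ref{sec.statement}; your approach is more ad hoc but perfectly adequate for this one identity.

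Where your sketch is genuinely thin is the claim that these are \emph{all} the coincidences. Your appeal to discreteness and bidirectional accumulation (Proposition~\ref{prop.CW-decomposition}(2)) does no work: those facts do not exclude extra intersection points sitting between consecutive candidates. Invoking ``the foliation picture'' is effectively re-citing \cite{Cannon-Dicks2}, which is what the paper does anyway, so that is acceptable as a citation---but it is not an argument. If you want a self-contained proof, the clean one is via slopes, and this is exactly how the paper argues in Lemma~\ref{lemma.P(m,n)vsQ(m,n)2}(3): by Lemma~\ref{lemma.action of FF}, $s(P_{2m,n})$ is the vertex of $\sigma_n$ not in $\sigma_{n-1}$ while $s(P_{2m'+1,n'})$ is the vertex of $\sigma_{n'}$ not in $\sigma_{n'+1}$; equality of slopes then forces $n'=n_+$, and the residual freedom (conjugation by powers of $D$) pins down $m'$. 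That slope computation is the missing ingredient in your outline.
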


In order to give an explicit combinatorial model
of $CW(\varphi)$, we introduce the following definition
(see \cite[Definition 4.2]{Cannon-Dicks2} and Figure \ref{fig.CW'}).

\begin{definition}
\label{def.CW'}
\rm
Let $CW'(\varphi)$ be the CW-decomposition of $\RR^2$
defined as follows:
The vertex set is
$\ZZ^2$, where each vertex $(m,n)$ is endowed with
the label $f_n\in\{L,R\}$.
The edge set consists of
the \textit{vertical} edges and
the \textit{slanted} edges,
which are defined as follows:
\begin{enumerate}[(E1)]
\item
The vertical edges are
\[
\langle (m,n), (m,n+1) \rangle.
\]
\item
The slanted edges are
\begin{align*}
\langle (2m,n), (2m+1,n_+) \rangle
& \quad
\mbox{if $f_n=R$,}\\
\langle (2m,n), (2m-1,n_+) \rangle
& \quad
\mbox{if $f_n=L$.}
\end{align*}
\end{enumerate}

The face set of $CW'(\varphi)$ is
$\{c'_{m,n}\}_{(m,n)\in\ZZ^2}$,
where $c'_{m,n}$
is described as follows.
\begin{enumerate}[(F1)]
\item
If $f_n=R$ (and hence $f_{n_{\pm}}=R$),
then $c'_{m,n}$ is the convex hull of
\[
\{(2m,n_-), (2m,n), (2m+1,n), (2m+1,n_+)\},
\]
and we assign  the color \lq white' to its interior
(see Figure \ref{fig.CW'}).
\item
If $f_n=L$ (and hence $f_{n_{\pm}}=L$),
then $c'_{m,n}$ is the convex hull of
\[
\{(2m-1,n), (2m-1,n_+), (2m,n_-), (2m,n)\},
\]
and we assign the color \lq gray' to its interior.
\end{enumerate}
Thus the interior of each $2$-cell  of $CW'(\varphi)$
has the color white or gray
according as it lies in
the vertical strip  $[m, m+1]\times\RR$
with $m$ even or odd.
It should be noted that the colored complex
$CW'(\varphi)$ admits the action of the infinite
cyclic group
$\langle D\rangle$
by setting $D(x,y)=(x+2,y)$.
\end{definition}

\begin{figure}[p]
\begin{center}
\setlength{\unitlength}{1truecm}
\begin{picture}(5.36,17.3)
\put(0,.3){\epsfig{file=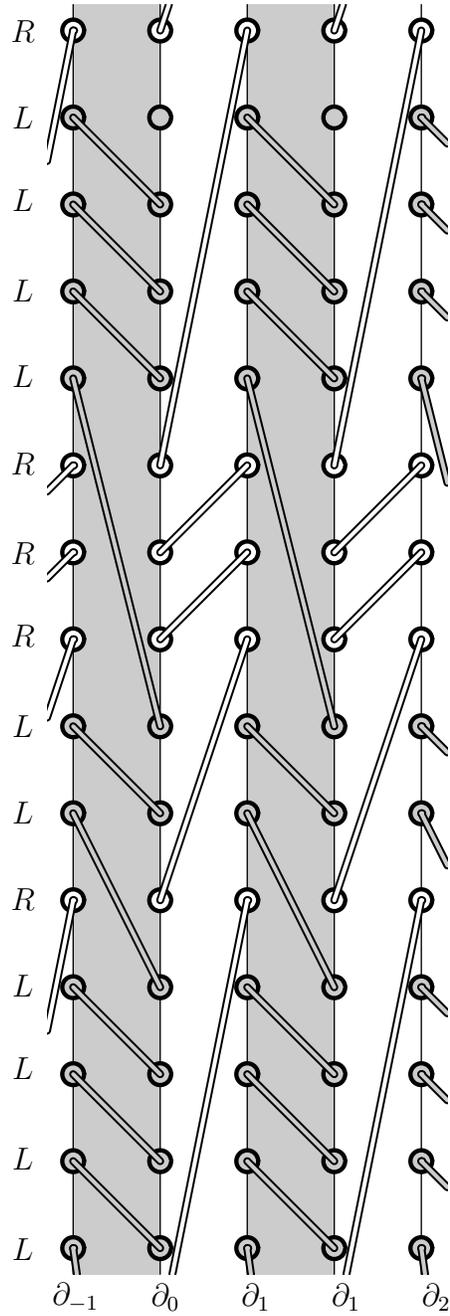, width =5.36truecm}}
\put(0.4,0){\makebox(0,0)[c]{$\partial_{-1}$}}
\put(1.6,0){\makebox(0,0)[c]{$\partial_{0}$}}
\put(2.8,0){\makebox(0,0)[c]{$\partial_{1}$}}
\put(4,0){\makebox(0,0)[c]{$\partial_{1}$}}
\put(5.2,0){\makebox(0,0)[c]{$\partial_{2}$}}
\put(-.3,0.65){\makebox(0,0)[c]{$L$}}
\put(-.3,1.85){\makebox(0,0)[c]{$L$}}
\put(-.3,3.05){\makebox(0,0)[c]{$L$}}
\put(-.3,4.15){\makebox(0,0)[c]{$L$}}
\put(-.3,5.3){\makebox(0,0)[c]{$R$}}
\put(-.3,6.45){\makebox(0,0)[c]{$L$}}
\put(-.3,7.6){\makebox(0,0)[c]{$L$}}
\put(-.3,8.8){\makebox(0,0)[c]{$R$}}
\put(-.3,9.9){\makebox(0,0)[c]{$R$}}
\put(-.3,11.1){\makebox(0,0)[c]{$R$}}
\put(-.3,12.25){\makebox(0,0)[c]{$L$}}
\put(-.3,13.4){\makebox(0,0)[c]{$L$}}
\put(-.3,14.6){\makebox(0,0)[c]{$L$}}
\put(-.3,15.7){\makebox(0,0)[c]{$L$}}
\put(-.3,16.85){\makebox(0,0)[c]{$R$}}
\end{picture}
\caption{The CW-complex $CW'(\varphi)$.
The vertical line segments represent vertical edges and
the double line segments represent slanted edges}\label{fig.CW'}
\end{center}
\end{figure}

In the above definition,
the vertex $(m,n)$ of
$CW'(\varphi)$ corresponds to
the vertex $p_{m,n}$ of $CW(\varphi)$, and the
slanted edges of $CW'(\varphi)$ correspond to
the identities in Proposition \ref{prop.verticesCW}(2).
This motivates us to define yet another CW-complex,
$CW^*(\varphi)$, as
the colored CW-complex obtained from $CW'(\varphi)$ by collapsing
each closed slanted edge in $CW'(\varphi)$ to a point.
The image of the $2$-cell $c'_{m,n}$
is a $2$-cell of $CW^*(\varphi)$,
denoted by $c^*_{m,n}$, and
the set $\{c^*_{m,n}\}_{(m,n)\in\ZZ}$
is the set of the $2$-cells of $CW^*(\varphi)$.
The open $2$-cells of $CW^*(\varphi)$ inherit colors
from those of $CW'(\varphi)$.
Each vertex of $CW^*(\varphi)$ is the image of
a slanted edge of $CW'(\varphi)$,
and it is endowed with the color white or gray
according as the slanted edge is contained in the white or gray
strip.
This determines the colored structure on $CW^*(\varphi)$.
The action of $\langle D\rangle$ on $CW'(\varphi)$
descends to an action of $\langle D\rangle$ on $CW^*(\varphi)$.
Thus $CW^*(\varphi)$ is a colored $\langle D\rangle$-CW-complex,
and it gives a combinatorial model of
the
colored $\langle D\rangle$-CW-complex $CW(\varphi)$.
To be precise, we have the following theorem
(see \cite[Theorem 7.12]{Cannon-Dicks2}).

\begin{theorem}
\label{Th.CDTessellation}
There is a color-preserving
$\langle D\rangle$-isomorphism
from $CW^*(\varphi)$ to $CW(\varphi)$,
sending the vertex $[(m,n)]$ to the vertex $p_{m,n}$.
The isomorphism maps the image of the vertical line
$\{m\}\times\ZZ$
in $CW'(\varphi)$
to the vertical line
$\partial_{m}$ in $CW(\varphi)$.
Here $[(m,n)]$ denotes the vertex of $CW^*(\varphi)$
represented by the vertex $(m,n)$ of $CW'(\varphi)$.
\end{theorem}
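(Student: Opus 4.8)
The plan is to build the color-preserving $\langle D\rangle$-isomorphism $CW^*(\varphi)\to CW(\varphi)$ by first establishing a map at the level of $CW'(\varphi)$ and then descending it through the quotient by the slanted edges. Concretely, I would define a map $h'$ on the underlying set of vertices of $CW'(\varphi)$ by $h'(m,n):=p_{m,n}=\rho_{\CCC}(P_{m,n})(\infty)\in\CC$, using Definition \ref{definition.P(m,n)}. The key computational input is Proposition \ref{prop.CW-vertices}, which tells us that for each fixed $m$ the points $\{p_{m,n}\}_{n\in\ZZ}$ are exactly the vertices of $\partial_m$ listed in increasing order. Thus $h'$ carries the $n$-indexed vertical line $\{m\}\times\ZZ$ bijectively and order-preservingly onto the discrete vertex set of $\partial_m$, and $h'$ is $\langle D\rangle$-equivariant because $P_{m,n}\mapsto \inn{D}(P_{m,n})$ under the index shift $m\mapsto m+2$, which corresponds to $z\mapsto z+1$ on the $\CC$ side by \eqref{trasition-equation}. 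The first step is therefore to verify that $h'$ is a well-defined bijection from the vertex set of $CW'(\varphi)$ onto $\cup_m\{p_{m,n}\}$, which is the disjoint union (within each line) of the vertex sets of the $\partial_m$'s; injectivity across different values of $m$ follows since distinct $\partial_m$'s have the described intersection pattern and the $P_{m,n}$ defining different lines are genuinely different elliptic generators.

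Next I would show that $h'$ respects the cell structure in the correct way. A vertical edge $\langle(m,n),(m,n+1)\rangle$ of $CW'(\varphi)$ should map to the edge of $CW(\varphi)$ joining the consecutive $\partial_m$-vertices $p_{m,n}$ and $p_{m,n+1}$; a slanted edge should map to a single vertex of $CW(\varphi)$. The point is that a slanted edge of $CW'(\varphi)$, say $\langle(2m,n),(2m+1,n_+)\rangle$ when $f_n=R$, has both endpoints sent by $h'$ to the same point of $\CC$: this is precisely the identity $p_{2m,n}=p_{2m+1,n_+}$ recorded in Proposition \ref{prop.verticesCW}(2), and similarly for $f_n=L$. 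Hence $h'$ collapses exactly the closed slanted edges, and so it factors through the quotient $CW'(\varphi)\to CW^*(\varphi)$ to give a well-defined map $h:CW^*(\varphi)\to CW(\varphi)$ on vertex sets, with $h([(m,n)])=p_{m,n}$. The induced identifications of vertices in $CW^*(\varphi)$ (two vertices $(2m,n)$ and $(2m\pm1,n_+)$ get glued) match exactly the coincidences of $\partial_m\cap\partial_{m\pm1}$-vertices described in Proposition \ref{prop.verticesCW}(2), so $h$ is a bijection on vertices.

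It then remains to promote $h$ to a cellular homeomorphism. For the $1$-skeleton: the vertical edges of $CW^*(\varphi)$ along the image line of $\{m\}\times\ZZ$ map to the edges of $CW(\varphi)$ along $\partial_m$, since $h$ preserves the cyclic/linear order of $\partial_m$-vertices and an edge of $CW(\varphi)$ in $\partial_m$ is by construction a closure of a component of $\partial_m$ minus its intersections with $\partial_{m\pm1}$ — i.e.\ a maximal run of consecutive $\partial_m$-vertices none of which is an intersection point — which is exactly the image of a vertical edge of $CW^*(\varphi)$ between two consecutive non-collapsed vertices. This gives a homeomorphism of $1$-skeleta, $\langle D\rangle$-equivariantly. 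For the $2$-cells: the face $c'_{m,n}$ of $CW'(\varphi)$ has boundary a quadrilateral with two vertical sides and two slanted sides (Definition \ref{def.CW'} (F1),(F2)); after collapsing, $c^*_{m,n}$ becomes a bigon-type $2$-cell whose boundary circuit, traced through $h$, is exactly the boundary circuit of one complementary disk of $\cup_m\partial_m$ in $\CCM$, as guaranteed by Proposition \ref{prop.CW-decomposition}(3). A counting/matching argument — the faces of $CW(\varphi)$ between $\partial_m$ and $\partial_{m+1}$ correspond under the combinatorial data of Proposition \ref{prop.verticesCW} to the $c^*_{m,n}$ with the index $m$ of that strip — then shows $h$ is a bijection on $2$-cells; by the Schoenflies theorem each face-to-face correspondence extends to a homeomorphism on closed cells, consistently with the identification on boundaries, yielding a homeomorphism $CW^*(\varphi)\to CW(\varphi)$. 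Finally, the coloring is preserved essentially by construction: both complexes color a vertex or open $2$-cell lying in the strip between (the $m$-th and $(m{+}1)$-st lines) white or gray according to the parity of $m$ (Definition \ref{def.CW'} and the paragraph defining the colored structure on $CW(\varphi)$), and $h$ preserves strip indices.

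\medskip

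The main obstacle I expect is not any single hard lemma but the bookkeeping in the last paragraph: one must match the $2$-cells of $CW^*(\varphi)$ with the complementary regions of $\cup_m\partial_m$ in a way that is simultaneously compatible with the already-constructed homeomorphism of $1$-skeleta and with $\langle D\rangle$-equivariance, and verify that the boundary word of each $c^*_{m,n}$ really is the full boundary circuit of the corresponding region of $CW(\varphi)$ (no region is traversed twice, no two regions are conflated). This is where one genuinely needs the precise vertex/edge incidence data of Propositions \ref{prop.CW-vertices} and \ref{prop.verticesCW} together with the topological control from Proposition \ref{prop.CW-decomposition}; once the boundary circuits are seen to match, the extension over $2$-cells by Schoenflies is routine. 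Since all of this detailed combinatorics is exactly what is carried out in \cite[Section 7]{Cannon-Dicks2} (in particular \cite[Theorem 7.12]{Cannon-Dicks2}), in this paper the proof can legitimately be reduced to assembling the cited statements Propositions \ref{prop.CW-vertices}, \ref{prop.verticesCW} and \ref{prop.CW-decomposition} into the factorization $CW'(\varphi)\twoheadrightarrow CW^*(\varphi)\xrightarrow{\ h\ } CW(\varphi)$ described above.
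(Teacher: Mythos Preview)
Your proposal is reasonable, but note that the paper does not actually give a proof of this theorem: it is stated with the parenthetical reference ``(see \cite[Theorem 7.12]{Cannon-Dicks2})'' and no argument is supplied. So there is nothing to compare against beyond the citation, and your final paragraph already correctly identifies that the detailed verification is precisely the content of \cite[Theorem~7.12]{Cannon-Dicks2}; the sketch you give (defining $h'$ on vertices via $p_{m,n}$, checking that slanted edges collapse by Proposition~\ref{prop.verticesCW}(2), matching $1$-skeleta via Proposition~\ref{prop.CW-vertices}, and then extending over $2$-cells using Proposition~\ref{prop.CW-decomposition}) is an accurate outline of how that proof goes.
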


\begin{remark}
\label{remark.CW-vertices}
\rm
Since $\rho_{\CCC}$ is a faithful discrete representation,
two elliptic generators $P_{m,n}$ and $P_{m',n'}$ coincide
if and only if the two points $p_{m,n}$ and $p_{m',n'}$ coincide.
Thus the slanted edges of $CW'(\varphi)$ represent identities
among the elliptic generators, too (see Lemma \ref{lemma.P(m,n)vsQ(m,n)2}(3)).
Thus we may identify the vertex set of $CW^*(\varphi)$
with the set $\{P_{m,n}\}$ of elliptic generators,
by the correspondence $[(m,n)]\mapsto P_{m,n}$.
\end{remark}

Let $c_{m,n}$ be the $2$-cell of $CW(\varphi)$ obtained as
the homeomorphic image of $c^*_{m,n}$.
Then $\{c_{m,n}\}_{(m,n)\in\ZZ}$
is the set of the $2$-cells of $CW(\varphi)$.
In the remainder of this section,
we give a description of $c_{m,n}$.
To this end, let
\begin{align*}
\FF_n'
&=
\begin{cases}
\FF_{n-1}L
& \mbox{if $f_n=R$,}\\
\FF_{n-1}R
& \mbox{if $f_n=L$.}
\end{cases}
\end{align*}
Then $\FF_n'$ is an automorphism of $\pi_1(\OO)$
preserving the distinguished element $D$.
Thus it induces a $\langle D\rangle$-equivariant homeomorphism of $\End$
preserving the equivalence relation $\sim$ defined by
(\ref{fuchsian-equivalence-relation}).
Since the limit set
$\Lambda(\rho_{\RRR})=\RRR$ is identified with
$\End/{\sim}$,
$\FF_n'$ induces a $\langle D\rangle$-equivariant self-homeomorphism
of $\Lambda(\rho_{\RRR})=\RRR$,
which we continue to denote by the same symbol.
Then we have the following description
of the $2$-cells of $CW(\varphi)$
(\cite[Theorem 7.12]{Cannon-Dicks2}).

\begin{proposition}
The $2$-cell $c_{m,n}\mkern-1.5mu$ of $ CW(\varphi)\mkern-1.5mu$
is equal to
$\CT(\inn{D^m}\FF_n'([B]))$.
\end{proposition}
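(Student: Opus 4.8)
The plan is to deduce the statement from the combinatorial model of $CW(\varphi)$ supplied by Theorem~\ref{Th.CDTessellation} together with the Cannon--Thurston filling picture, the bridge between the two being a direct computation inside $\pi_1(\OO)$ that pins down $\inn{D^m}\FF_n'([B])$ as an explicit closed arc of $\RRR=\End/{\sim}$ (throughout, $[B]$ denotes the closed arc of $\RRR$ that is the image of the cylinder $[B]\subset\End$). First I would reduce to the case $m=0$: an inner automorphism $\inn{D^{}}$ acts on $\End$, hence on $\RRR$, as left translation by $D$, so $\pi_1(\OO)$-equivariance of $\CT$ and the normalization $\rho_{\CCC}(D)(z)=z+1$ give $\CT(\inn{D^m}\FF_n'([B]))=\CT(\FF_n'([B]))+m$; on the other hand the generator $D$ of $\langle D\rangle$ shifts $CW'(\varphi)$, hence $CW^*(\varphi)$ and $CW(\varphi)$, one period to the right, so $c_{m,n}=c_{0,n}+m$. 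Thus it suffices to prove $c_{0,n}=\CT(\FF_n'([B]))$, which I would treat separately according to whether $f_n=R$ or $f_n=L$.

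By Theorem~\ref{Th.CDTessellation} together with the construction of $CW^*(\varphi)$ from $CW'(\varphi)$ by collapsing the slanted edges (Definition~\ref{def.CW'}) and the identities of Proposition~\ref{prop.verticesCW}(2), the cell $c_{0,n}$ is the closed $2$-cell bounded by the sub-arc of $\partial_0$ joining its vertices $p_{0,n_{-}}$ and $p_{0,n}$ together with the sub-arc of $\partial_{1}$ (if $f_n=R$), resp.\ of $\partial_{-1}$ (if $f_n=L$), joining the same two points; here we use $p_{0,n_{-}}=p_{1,n}$, $p_{0,n}=p_{1,n_{+}}$ when $f_n=R$, and the analogous identities with $\partial_{-1}$ when $f_n=L$. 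Put $\hat p_{0,n}:=\rho_{\RRR}(P_{0,n})(\infty)\in\RRR$. Since $\infty\in\CT^{-1}(\infty)$ (Section~\ref{sec.fractal-tessellation}) and $\CT$ is equivariant, $\CT(\hat p_{0,n})=\rho_{\CCC}(P_{0,n})(\infty)=p_{0,n}$, and likewise $\CT(\hat p_{0,n_{-}})=p_{0,n_{-}}$. Hence it is enough to show: (a)~$\FF_n'([B])$ is the closed sub-arc of $\RRR$ with endpoints $\hat p_{0,n}$ and $\hat p_{0,n_{-}}$; and (b)~$\CT$ maps this sub-arc onto $c_{0,n}$.

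For (a) I would first identify $[B]$ itself: evaluating the limits $\lim_{k}\rho_{\RRR}(XD^{k})(\infty)$ of the ends $XD^{\infty}$ for $X\in\{A,C\}$ and for the empty word shows that the cylinder arcs $[A],[B],[C]$ tessellate $\RRR$ and that $[B]$ is the closed arc from $\rho_{\RRR}(A)(\infty)$ to $\rho_{\RRR}(C)(\infty)$, its interior point $\rho_{\RRR}(B)(\infty)$ being the $\sim$-class of $BD^{\pm\infty}$. Applying the automorphism $L$, which fixes $D$ and satisfies $L(A)=B$, $L(C)=C$ by~\eqref{DefiningRL}, one gets that $L([B])$ is the sub-arc of $[B]$ from $\rho_{\RRR}(B)(\infty)$ to $\rho_{\RRR}(C)(\infty)$; similarly $R([B])$ is the sub-arc from $\rho_{\RRR}(A)(\infty)$ to $\rho_{\RRR}(B)(\infty)$, using $R(A)=A$, $R(C)=B$. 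Since $\FF_{n-1}$ also fixes $D$, it sends the $\sim$-class of $YD^{\pm\infty}$ to that of $\FF_{n-1}(Y)D^{\pm\infty}$, which equals $\rho_{\RRR}(\FF_{n-1}(Y))(\infty)$; so $\FF_n'([B])=\FF_{n-1}(L([B]))$ (case $f_n=R$) is the closed sub-arc with endpoints $\rho_{\RRR}(\FF_{n-1}(B))(\infty)$ and $\rho_{\RRR}(\FF_{n-1}(C))(\infty)$, while $\FF_n'([B])=\FF_{n-1}(R([B]))$ (case $f_n=L$) is the closed sub-arc with endpoints $\rho_{\RRR}(\FF_{n-1}(B))(\infty)$ and $\rho_{\RRR}(\FF_{n-1}(A))(\infty)$. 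The first endpoint equals $\hat p_{0,n}$ because $P_{0,n}=\FF_{n-1}(B)$. For the second, Convention~\ref{convention.Farey} lets me write $\FF_{n-1}=\FF_{n_{-}-1}\,R\,L^{\,n-1-n_{-}}$ when $f_n=R$, resp.\ $\FF_{n-1}=\FF_{n_{-}-1}\,L\,R^{\,n-1-n_{-}}$ when $f_n=L$ (between two consecutive occurrences of one letter, all the letters in between are the other one); then \eqref{DefiningRL} gives $\FF_{n-1}(C)=\FF_{n_{-}-1}(R(L^{\,n-1-n_{-}}(C)))=\FF_{n_{-}-1}(R(C))=\FF_{n_{-}-1}(B)=P_{0,n_{-}}$, and symmetrically $\FF_{n-1}(A)=\FF_{n_{-}-1}(L(A))=\FF_{n_{-}-1}(B)=P_{0,n_{-}}$. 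So in both cases $\FF_n'([B])$ is exactly the closed sub-arc of $\RRR$ with endpoints $\hat p_{0,n}$ and $\hat p_{0,n_{-}}$, which is (a). (These two group identities are the algebraic content behind Proposition~\ref{prop.verticesCW}(2).)

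For (b), the Proposition asserting that $\CT$ maps $(w_i,w_{i+1})$ onto the closed region between $\partial_i$ and $\partial_{i+1}$, filling it in monotonically, shows that the $\CT$-preimage of the interior of a $2$-cell lying in that region is a single open arc, whose closure $\CT$ carries onto the whole $2$-cell. Since by (a) the endpoints of $\FF_n'([B])$ are sent to the vertices $p_{0,n_{-}},p_{0,n}$ of $c_{0,n}$, which lie on the $1$-skeleton rather than in an open cell, it remains only to check that the interior of $\FF_n'([B])$ contains no $\CT$-preimage of the $1$-skeleton --- equivalently, that the arcs $\{\inn{D^m}\FF_n'([B])\}_{(m,n)\in\ZZ^2}$ tessellate $\RRR$ compatibly with the cell structure of $CW(\varphi)$, each interior being sent into a single open $2$-cell. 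This compatibility is exactly the self-similar subdivision property of the automorphisms $\FF_n'$ with respect to the stable and unstable foliations that define the spider $\spider$, and is the substance of \cite[Section~7]{Cannon-Dicks2} (indeed the statement being proved is essentially \cite[Theorem~7.12]{Cannon-Dicks2} transported into the present notation via Theorem~\ref{Th.CDTessellation}). I expect this last bookkeeping step --- keeping the orientation conventions of the $\partial_m$, of $CW'(\varphi)$ and of the word $\Omega$ consistently aligned --- to be the only real obstacle, the computation in~(a) being the essential new input.
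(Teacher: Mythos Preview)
The paper gives no proof of this proposition: it is quoted from \cite[Theorem~7.12]{Cannon-Dicks2}, followed only by a remark on the meaning of $[B]$. There is therefore no argument in the paper to compare against.

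Your reduction to $m=0$ by $\langle D\rangle$-equivariance is correct, and the computation in~(a) is clean and right: the identities $\FF_{n-1}(C)=\FF_{n_{-}-1}(B)$ when $f_n=R$ and $\FF_{n-1}(A)=\FF_{n_{-}-1}(B)$ when $f_n=L$ follow exactly as you say from $L(C)=C$, $R(A)=A$, $R(C)=L(A)=B$, and they pin down the endpoints of $\FF_n'([B])$ as $\hat p_{0,n}$ and $\hat p_{0,n_-}$. This is precisely the translation one needs between the notation of the present paper and that of \cite{Cannon-Dicks2}.

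Part~(b), however, is the entire substance of the proposition, and you defer it to \cite[Section~7]{Cannon-Dicks2}; since the proposition \emph{is} \cite[Theorem~7.12]{Cannon-Dicks2}, this makes the argument circular rather than a self-contained proof. Concretely, knowing the endpoints of the arc $\FF_n'([B])$ in $\RRR$ does not yet tell you that its interior lies inside a single interval $(w_i,w_{i+1})$ of $\RRR\setminus\CT^{-1}(\infty)$, nor that $\CT$ carries it onto exactly one $2$-cell rather than a chain of adjacent ones (the two corner vertices $p_{0,n_-}$, $p_{0,n}$ bound the cell $c_{0,n}$, but an arc with those $\CT$-images could a~priori sweep through neighbouring cells as well). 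Establishing this requires relating $\FF_n'([B])$ to the feet of the double spider $\spider$, i.e.\ to the stable and unstable foliations, which is the geometric core of the construction in \cite{Cannon-Dicks2} and not merely orientation bookkeeping. So your proposal is a correct notational reduction to the cited result, but it does not stand as an independent proof.
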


In the above proposition, $B$ is
the elliptic generator
of
Lemma \ref{lemma.RL},
and $[B]$ denotes the image, in $\RRR=\End/{\sim}$,
of the subset $[B]$ of the space $\End$,
consisting of those infinite words
which
have
$B$ as an initial segment.

\section{Statement and the proof of the main result}
\label{sec.statement}

\begin{theorem}
\label{maintheorem}
The vertex set of the fractal tessellation $CW(\varphi)$
is identical with the vertex set of the
projected horosphere
triangulation $\Delta(\varphi)$.
Moreover,
the combinatorial structure of the colored
$\langle D \rangle$-CW-complex
$CW(\varphi)$
can be recovered from that of
the layered $\langle D \rangle$-simplicial complex
$$(\Delta(\varphi), \{\LL(\rho_{\CCC},\sigma_n)\}),$$
and vice versa.
\end{theorem}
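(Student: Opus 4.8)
The plan is to establish the two assertions separately, following the strategy announced in the introduction. For the equality of vertex sets the idea is to push both vertex sets down to sets of elliptic generators, via the combinatorial models of Sections~\ref{sec.cusptriangulation} and~\ref{sec.fractal-tessellation}, and to compare the resulting sets of \emph{slopes} in $\QQQ$. For the recoverability the idea is to exhibit a single labeled CW-decomposition $\cellcomplex$ of $\CC$ --- the ``parents complex'' of Definition~\ref{definition.parents-complex} --- that collapses onto each of the two decorated complexes, with both collapses reversible from the decorated quotient.

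For the first assertion, observe that by Theorem~\ref{thm.canonical-decomposition}(1) together with Proposition~\ref{prop.model-Delta}(1) the vertex set of $\Delta(\varphi)$ is $\{\rho_{\CCC}(P)(\infty)\}$, with $P$ running over all elliptic generators whose slope is a vertex of some Farey triangle $\sigma_n$, while by Proposition~\ref{prop.verticesCW}(1) (equivalently Proposition~\ref{prop.CW-vertices}) the vertex set of $CW(\varphi)$ is $\{\rho_{\CCC}(P_{m,n})(\infty)\}_{(m,n)\in\ZZ^2}$. Since $\rho_{\CCC}$ is faithful and discrete (Remark~\ref{remark.CW-vertices}), since for any slope $r$ the elliptic generators of slope $r$ form a single $\langle D\rangle$-conjugacy orbit (the Proposition of Section~\ref{sec.orbifold}), and since both the $\Delta$-generators and the set $\{P_{m,n}\}$ are unions of full $\langle D\rangle$-orbits --- the latter because, for fixed $n$, $\{P_{2m,n}\}_m$ and $\{P_{2m+1,n}\}_m$ are the $\langle D\rangle$-orbits of $\FF_{n-1}(B)$ and $\FF_n\inn{C}(B)$ --- the assertion reduces to the identity of subsets of $\QQQ$
\[
\{\,s(\FF_{n-1}(B))\,\}_{n\in\ZZ}\ \cup\ \{\,s(\FF_{n}\inn{C}(B))\,\}_{n\in\ZZ}\ =\ \bigcup_{n\in\ZZ}\{\text{vertices of }\sigma_n\}.
\]
This is a bookkeeping statement about the $R/L$-word $\Omega$: the vertices of the $\sigma_n$ are exactly the ``pivots'' of $\Omega$ --- each maximal block $R^{a_i}$ or $L^{b_i}$ corresponds to a fixed pivot, the pivot changing at the junctions between blocks --- and one checks that the recursion $\FF_n=\FF_{n-1}f_n$ of Definition~\ref{def.recursiveFF}, applied to the base generators $B$ (slope $1$) and $\inn{C}(B)$ (slope $-1$), sweeps exactly through these pivots. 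I would isolate this as a lemma comparing $P_{m,n}$ with the $\Delta$-generators (the lemma already alluded to in Remark~\ref{remark.CW-vertices}); it should follow by an induction inside the Farey tessellation.

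For the second assertion, I would introduce the parents complex $\cellcomplex$ (Definition~\ref{definition.parents-complex}) as a $\langle D\rangle$-CW-decomposition of $\CC$ whose vertices, edges and $2$-cells are indexed and labeled in the elliptic-generator language of Section~\ref{sec.orbifold}, and which carries two distinguished families of subcomplexes: one reflecting the stacked layers $\{\LL(\sigma_n)\}$ and the tetrahedra $\trg(\sigma_n,\sigma_{n+1})$, the other reflecting the fractal lines $\{\partial_m\}$ and the $2$-cells $c_{m,n}$. The motivating geometric picture is that $\cellcomplex$ is the overlay of the two tessellations --- the straight $\Delta(\varphi)$-edges cut transversally by the fractal $CW(\varphi)$-edges, as in Figure~\ref{fig.RLLRRRLLLL} --- but the construction is to be carried out abstractly from $\Omega$. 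The key step, which I expect to be Proposition~\ref{prop.quotient-cellcomplex}, is that collapsing the first family yields a layered $\langle D\rangle$-simplicial complex isomorphic to $(\Delta^*(\varphi),\{\LL(\sigma_n)\})$, hence by Theorem~\ref{thm.canonical-decomposition} to $(\Delta(\varphi),\{\LL(\rho_{\CCC},\sigma_n)\})$, while collapsing the second family yields a colored $\langle D\rangle$-CW-complex isomorphic to $CW^*(\varphi)$, hence by Theorem~\ref{Th.CDTessellation} to $CW(\varphi)$. Granting this, the theorem follows: the layered structure of $\Delta(\varphi)$ determines the Farey sequence $\{\sigma_n\}$ (consecutive layers record successive diagonal exchanges, pinned down by the reference triangles of Convention~\ref{convention.Farey}), hence $\Omega$, hence $\cellcomplex$, hence, collapsing the second family, the colored complex $CW(\varphi)$; the reverse direction is symmetric, reading $\Omega$ off the $1$-skeleton and coloring of $CW(\varphi)$.

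The main obstacle is Proposition~\ref{prop.quotient-cellcomplex}: one must set up the cells and labels of $\cellcomplex$ precisely enough that \emph{both} collapses are genuinely cellular and recover not only the underlying complexes but also the decorations --- the layered structure on the $\Delta$-side, the white/gray coloring on the $CW$-side --- and one must check that the two collapsing families are ``transverse'', so that each decorated quotient pins down its collapsed family, hence $\cellcomplex$. Concretely, the delicate point is to exhibit, in the common elliptic-generator language, two matchings: that the slanted-edge identifications of $CW'(\varphi)$ and the face-identifications producing the $c^*_{m,n}$ of $CW^*(\varphi)$ appear as explicit subconfigurations of the stacked layers of $\Delta^*(\varphi)$, and, conversely, that the diagonal-exchange pattern organizing $\Delta^*(\varphi)$ is visible inside $CW(\varphi)$. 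Carrying out these two matchings --- together with their compatibility with the $\langle D\rangle$-actions and the decorations --- is where the combinatorics of $\trg(\sigma_n,\sigma_{n+1})$ from Section~\ref{sec.cusptriangulation} and of the $\partial_m$ from Section~\ref{sec.fractal-tessellation} must be pushed through in full, using Propositions~\ref{prop.model-Delta}, \ref{prop.CW-vertices} and~\ref{prop.verticesCW}.
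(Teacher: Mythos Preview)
Your approach to the first assertion matches the paper's: both reduce to showing that the elliptic-generator sets $\{P_{m,n}\}$ and $\{Q_{m,n}\}$ coincide (the paper's Lemma~\ref{lemma.P(m,n)vsQ(m,n)}, proved via the slope computation you sketch), so that their images under $\rho_{\CCC}(\cdot)(\infty)$ agree.

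For the second assertion your route diverges from the paper's. You argue indirectly: each decorated complex determines the bi-infinite $R/L$-word $\Omega$, and $\Omega$ then determines both complexes via $\cellcomplex(\varphi)$ and Proposition~\ref{prop.quotient-cellcomplex}. This is correct and economical, but the paper instead supplies \emph{explicit geometric recipes} working inside the complexes themselves. From $(\Delta(\varphi),\{\LL_n\})$ one traces the fractal lines $\partial_m$ as ``upward'' and ``downward'' edge paths through apex vertices (Definition~\ref{def.apex}, Lemma~\ref{lemma.apex}); from the colored $CW(\varphi)$ one reconstructs the layers $\LL_n$ via the ``central edge'' of each $2$-cell and a local edge-coupling rule at every vertex (Lemmas~\ref{lemma.2-cell-boundary} and~\ref{lemma.layer}). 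The paper's approach thus yields the cell-by-cell dictionary promised in the italicized recipes after the statement, rather than passage through the monodromy word; your approach avoids the case analyses of those lemmas at the cost of this explicitness. One imprecision in your sketch worth correcting: you describe the two collapses of $\cellcomplex(\varphi)$ as dual (``collapse the first family'' versus ``collapse the second family''), but in the actual construction there are \emph{three} edge types---horizontal, vertical, slanted---and \emph{both} quotients collapse the slanted edges, while one \emph{removes} the vertical edges (to get $\Delta^{**}$) and the other \emph{removes} the horizontal edges (to get $CW^{**}$).
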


An explicit description of the
second statement of the theorem is
presented in italics in
mid-course
of the proof.

In order to give a proof of the main theorem,
we
recall Convention \ref{convention.Farey}.
Then we have the following lemma
for the slope $s(P_{m,n})$ of
the elliptic
generator $P_{m,n}$ of
Definition
\ref{definition.P(m,n)}.

\begin{lemma}
\label{lemma.action of FF}
{\normalfont(1)} $\sigma_n=(\FF_{n})_*(\sigma_0)$ and
$\sigma_{n+1}=(\FF_{n})_*(\sigma_1)$.

{\normalfont(2)} $s(P_{2m,n}) = (\FF_{n-1})_*(1)$, and it is equal to
the vertex of $\sigma_n$
which is not contained in $\sigma_{n-1}$.

{\normalfont(3)} $s(P_{2m+1,n}) = (\FF_n)_*(-1)$, and
it is equal to the vertex of $\sigma_n$
which is not contained in $\sigma_{n+1}$.
\end{lemma}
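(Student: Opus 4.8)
The plan is to prove part~(1) by induction on $|n|$ --- the inductive step being a single diagonal-exchange bookkeeping --- and then to deduce parts~(2) and~(3) from~(1) together with the two slope values $s(B)=1$ and $s(\inn{C}(B))=-1$. For the base case $n=0$ of~(1) nothing is required, since $\FF_0=1$ makes both asserted identities into the normalization $\sigma_0=\langle 0,\infty,-1\rangle$, $\sigma_1=\langle 0,1,\infty\rangle$ of Convention~\ref{convention.Farey}. For the inductive step with $n\ge 0$, I would assume $\sigma_n=(\FF_n)_*(\sigma_0)$ and $\sigma_{n+1}=(\FF_n)_*(\sigma_1)$ and prove the same statements with $n$ replaced by $n+1$. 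As $\FF_{n+1}=\FF_nf_{n+1}$ we have $(\FF_{n+1})_*=(\FF_n)_*\circ(f_{n+1})_*$, and since both $R_*$ and $L_*$ carry $\sigma_0$ to $\sigma_1$ (Lemma~\ref{lemma.RL}(3)) this already gives $(\FF_{n+1})_*(\sigma_0)=(\FF_n)_*(\sigma_1)=\sigma_{n+1}$. For the other identity I would note, by a one-line computation with the matrices~\eqref{MatricesRL}, that $R_*(\sigma_1)$ and $L_*(\sigma_1)$ are the Farey triangles obtained from $\sigma_1$ by crossing its edge $\langle 0,1\rangle$ and its edge $\langle 1,\infty\rangle$, respectively, i.e.\ the \lq\lq right'' and \lq\lq left'' neighbours of $\sigma_1$ with respect to the incoming edge $\langle 0,\infty\rangle=\sigma_0\cap\sigma_1$. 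Since $(\FF_n)_*\in\PSL(2,\ZZ)$ is an orientation-preserving automorphism of the Farey tessellation taking the pair $(\sigma_0,\sigma_1)$ to $(\sigma_n,\sigma_{n+1})$, it takes the right (resp.\ left) neighbour of $\sigma_1$ to the right (resp.\ left) neighbour of $\sigma_{n+1}$, which by the construction of $\{\sigma_n\}$ and $\{f_n\}$ in Section~\ref{sec.farey} is precisely $\sigma_{n+2}$; hence $(\FF_{n+1})_*(\sigma_1)=(\FF_n)_*\bigl((f_{n+1})_*(\sigma_1)\bigr)=\sigma_{n+2}$. This yields~(1) for all $n\ge 0$. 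For $n<0$ I would not repeat the argument but use periodicity: $\FF_{n+p}=\FF_p\FF_n$, the identity $(\FF_p)_*=\varphi_*$ (valid because $\varphi=\pm\FF_p$ in $SL(2,\ZZ)$), and $\varphi_*(\sigma_m)=\sigma_{m+p}$ from~\eqref{Period} propagate~(1) to all of $\ZZ$.

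Granting~(1), parts~(2) and~(3) are short. Each $\FF_k$ is a composition of copies of $R$ and $L$, hence an automorphism of $\pi_1(\OO)$ fixing $D$; therefore it sends elliptic generators to elliptic generators and satisfies $s(\FF_k(P))=(\FF_k)_*(s(P))$ for every elliptic generator $P$. Moreover conjugation by a power of $D$ preserves slopes, because $\inn{D^m}(P_j)=P_{j+3m}$ and the slopes along a sequence of elliptic generators are $3$-periodic (properties of elliptic generators recalled in Section~\ref{sec.orbifold}). Hence $s(P_{2m,n})=s(\inn{D^m}\FF_{n-1}(B))=s(\FF_{n-1}(B))=(\FF_{n-1})_*(1)$, using $s(B)=1$; and since $1$ is the vertex of $\sigma_1$ not lying on $\sigma_0$, while $(\FF_{n-1})_*$ carries $\sigma_0\mapsto\sigma_{n-1}$ and $\sigma_1\mapsto\sigma_n$ by~(1), its image $(\FF_{n-1})_*(1)$ is the vertex of $\sigma_n$ not lying on $\sigma_{n-1}$. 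Part~(3) is the same with $B$ replaced by $\inn{C}(B)$: by Lemma~\ref{lemma.RL}(2) the triple $(A,C,\inn{C}(B))$ is associated with $\sigma_0=\langle 0,\infty,-1\rangle$, so $s(\inn{C}(B))=-1$, the vertex of $\sigma_0$ not lying on $\sigma_1$; then $s(P_{2m+1,n})=(\FF_n)_*(-1)$, which by~(1) is the vertex of $\sigma_n$ not lying on $\sigma_{n+1}$.

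The one step that is not pure bookkeeping is the orientation matching in the inductive step of~(1): one must be certain that \lq\lq $\ell$ exits $\sigma_{n+1}$ to the right'' (the meaning of $f_{n+1}=R$) pairs with the automorphism $R_*$ and not with $L_*$. This is exactly the compatibility between the combinatorial definition of the sequences $\{\sigma_n\}$, $\{f_n\}$ and the choice of matrices~\eqref{MatricesRL}, which is already fixed in Section~\ref{sec.farey} and repackaged in Lemma~\ref{lemma.RL}, so it introduces no new computation; it is, however, the place where the argument must be written carefully, keeping track of the orientation of the Farey triangles throughout.
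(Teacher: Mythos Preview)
Your proof is correct and follows essentially the same route as the paper: induction for part~(1) using that an orientation-preserving automorphism of the Farey tessellation carries the right/left neighbour of $\sigma_1$ (relative to $\sigma_0$) to the right/left neighbour of $\sigma_{n+1}$ (relative to $\sigma_n$), and then deducing~(2) and~(3) from~(1) via $s(B)=1$ and $s(\inn{C}(B))=-1$. The only noteworthy variation is your treatment of negative~$n$: the paper simply says ``a parallel argument works'', whereas you use the periodicity $\FF_{n+p}=\FF_p\FF_n$ together with $(\FF_p)_*=\varphi_*$ and $\varphi_*(\sigma_m)=\sigma_{m+p}$ to propagate the result downward --- a clean shortcut that avoids reversing the induction.
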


\begin{proof}
(1) If $n=0$, then $\FF_0=id$ and the assertion obviously holds.
Assume that
$n > 0$ and
the assertion holds for $n-1$.
Since
$\FF_n=(\FF_{n-1}f_n\FF_{n-1}^{-1})\FF_{n-1}$,
the inductive hypothesis
implies
\[
((\FF_n)_*(\sigma_0),(\FF_n)_*(\sigma_1))
=((\FF_{n-1}f_n\FF_{n-1}^{-1})_*(\sigma_{n-1}),
(\FF_{n-1}f_n\FF_{n-1}^{-1})_*(\sigma_{n})).
\]
Since
$(\FF_{n-1})_*$ maps the pair $(\sigma_0,\sigma_1)$
to the pair $(\sigma_{n-1},\sigma_{n})$,
the conjugate $(\FF_{n-1}f_n\FF_{n-1}^{-1})_*$
of $(f_n)_*$ by $(\FF_{n-1})_*$ rotates the pair
$(\sigma_{n-1},\sigma_{n})$ to the right or left,
according as $f_n$ is $R$ or $L$.
By the definition of $f_n$,
this implies that $(\FF_{n-1}f_n\FF_{n-1}^{-1})_*$
maps the pair $(\sigma_{n-1},\sigma_{n})$
to $(\sigma_{n},\sigma_{n+1})$.
Thus the assertion is valid for $n$.
This proves the assertion when $n$ is non-negative.
A parallel argument works for the case when $n$ is negative.

(2) and (3). These assertions follow from (1)
and the fact that
$1$ (resp. $-1$) is the vertex of $\sigma_1$ (resp. $\sigma_0$)
which is not contained in $\sigma_0$ (resp. $\sigma_1$).
\end{proof}

\begin{definition}
\label{def.Q(m,n)}
\rm
Let $\{Q_{m,1}\}_{m\in\ZZ}$ denote the sequence of elliptic generators
associated with $\sigma_1$ such that
\[
(Q_{0,1},Q_{1,1},Q_{2,1})=(A,B,C).
\]
Let $Q_{m,n}$ be the elliptic generator defined by
\[
Q_{m,n}=\FF_{n-1}(Q_{m,1}).
\]
(The second identity is consistent with the definition
of $Q_{m,1}$ even when $n=1$, because $\FF_0=id$.)
\end{definition}

\begin{lemma}
\label{lemma.Q(m,n)}
{\normalfont(1)} For each $n\in\ZZ$,
$\{Q_{m,n}\}_{m\in\ZZ}$ is the sequence of elliptic generators
associated with $\sigma_n$.
Thus $\{Q_{m,n}\}_{m\in\ZZ}$ is equal to $\{P_m^{(n)}\}_{m\in\ZZ}$
as
in Theorem \ref{thm.canonical-decomposition},
after a shift of indices.

{\normalfont(2)} For each $n\in\ZZ$, the sequences $\{Q_{m,n}\}_{m\in\ZZ}$ and
$\{Q_{m,n+1}\}_{m\in\ZZ}$ are related as follows.
\begin{enumerate}[\normalfont (i)]
\item
If $f_n=R$,
then
\[
Q_{3m,n}=Q_{3m,n+1}, \quad Q_{3m+1,n}=Q_{3m+2,n+1}.
\]
\item
If $f_n=L$,
then
\[
Q_{3m+1,n}=Q_{3m,n+1}, \quad Q_{3m+2,n}=Q_{3m+2,n+1}.
\]
\end{enumerate}
Moreover, these are the only identities among the members of
the two sequences.

{\normalfont(3)} Two elements $Q_{m,n}$ and $Q_{m',n'}$ are identical
if and only if they are related by a finite sequence
of the above identities.
To be precise, $Q_{m,n}=Q_{m',n'}$ if and only if
$(m,n)$ and $(m',n')$ are equivalent
with respect to the equivalence relation
generated by the following elementary relations:
\begin{enumerate}[\normalfont (i)]
\item
If $f_n=R$, then
\[
(3m,n)\sim (3m,n+1), \quad (3m+1,n) \sim (3m+2,n+1).
\]
\item
If $f_n=L$,
then
\[
(3m+1,n)\sim (3m,n+1), \quad (3m+2,n)\sim (3m+2,n+1).
\]
\end{enumerate}
\end{lemma}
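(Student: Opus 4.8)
The plan is to treat the three assertions in turn, bootstrapping from the behaviour of the single automorphism $f_n$.

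\emph{Part (1).} Each of $R$ and $L$ fixes the distinguished element $D$, hence so does $\FF_{n-1}$; therefore $\FF_{n-1}$ carries every sequence of elliptic generators to a sequence of elliptic generators, sending the one associated with a Farey triangle $\tau$ to the one associated with $(\FF_{n-1})_*(\tau)$. Applying this with $\tau=\sigma_1$ and invoking Lemma~\ref{lemma.action of FF}(1) with $n$ replaced by $n-1$, which gives $\sigma_n=(\FF_{n-1})_*(\sigma_1)$, we get that $\{Q_{m,n}\}_{m\in\ZZ}=\{\FF_{n-1}(Q_{m,1})\}_{m\in\ZZ}$ is the sequence of elliptic generators associated with $\sigma_n$. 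As $\{P^{(n)}_m\}$ is another such sequence, the two agree after a shift of indices by a multiple of $3$.

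\emph{Part (2).} Since $\FF_{n-1}$ is injective and $Q_{m,n}=\FF_{n-1}(Q_{m,1})$, $Q_{m',n+1}=\FF_n(Q_{m',1})=\FF_{n-1}\bigl(f_n(Q_{m',1})\bigr)$, an identity $Q_{m,n}=Q_{m',n+1}$ holds if and only if $Q_{m,1}=f_n(Q_{m',1})$, so it suffices to compare the sequence $\{Q_{m,1}\}$ associated with $\sigma_1$ with its image under $f_n\in\{R,L\}$. From the properties of elliptic generators summarised in Section~\ref{sec.orbifold} together with $(Q_{0,1},Q_{1,1},Q_{2,1})=(A,B,C)$ one has $Q_{3j,1}=\inn{D^j}(A)$, $Q_{3j+1,1}=\inn{D^j}(B)$, $Q_{3j+2,1}=\inn{D^j}(C)$, and, because $R$ and $L$ fix $D$, the claimed identities (i) and (ii) drop out of the defining rules~\eqref{DefiningRL} (for instance, when $f_n=R$: $R(Q_{3j,1})=\inn{D^j}(A)=Q_{3j,1}$ and $R(Q_{3j+2,1})=\inn{D^j}(B)=Q_{3j+1,1}$; the case $f_n=L$ is parallel). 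That there are no further identities follows from the slope invariant: $s(Q_{m,1})$ depends only on $m\bmod 3$, taking the values $0,1,\infty$, whereas $s(f_n(Q_{m',1}))=(f_n)_*\bigl(s(Q_{m',1})\bigr)$; since $R_*(1)=1/2$ and $L_*(1)=2$ are not vertices of $\sigma_1$, a coincidence forces $s(Q_{m',1})\in\{0,\infty\}$, and in each of the two surviving cases an equality $\inn{D^a}(X)=\inn{D^b}(Y)$ with $X,Y\in\{A,B,C\}$ forces $X=Y$ and $a=b$ (recall that $\pi_1(\OO)$ is the free product $\langle A\rangle\ast\langle B\rangle\ast\langle C\rangle$, in which $D$ has infinite order and the centraliser of a conjugate of a generator has order~$2$). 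This leaves precisely the two families listed.

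\emph{Part (3).} The ``if'' direction is immediate from Part~(2), each listed elementary relation being one of the identities there. For the converse assume $Q_{m,n}=Q_{m',n'}$ with $n\le n'$ and induct on $n'-n$. If $n=n'$ then $m=m'$, since distinct indices within a single sequence $\{Q_{\ell,n}\}_\ell$ give distinct generators (unequal slopes when $\ell\not\equiv\ell'\pmod 3$, and the conjugates $\inn{D^j}(Q_{i,n})$ are pairwise distinct). If $n<n'$, put $P:=Q_{m,n}$. An elliptic generator occurs in the sequence associated with $\sigma_k$ exactly when its slope is a vertex of $\sigma_k$: if it is, then it has the same slope as the corresponding member of that sequence, hence equals an $\inn{D^j}$-conjugate of it, using that two elliptic generators of the same slope differ by conjugation by a power of $D$ (Section~\ref{sec.orbifold}). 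The crucial geometric point is that $\{k : s(P)\text{ is a vertex of }\sigma_k\}$ is an interval of $\ZZ$: the triangles incident to a fixed ideal vertex form a bi-infinite geodesic in the trivalent dual tree of the Farey tessellation, the sequence $\{\sigma_k\}$ is itself a non-backtracking bi-infinite path in that tree (the geodesic $\ell$ meets each Farey edge at most once), and two such paths in a tree intersect in a connected set. Since $s(P)$ is a vertex of both $\sigma_n$ and $\sigma_{n'}$, it is a vertex of $\sigma_{n+1}$, so $P=Q_{m'',n+1}$ for a unique $m''$; by the ``only identities'' clause of Part~(2), $(m,n)\sim(m'',n+1)$, and applying the inductive hypothesis to $Q_{m'',n+1}=Q_{m',n'}$ finishes the argument.

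The step I expect to be the main obstacle is the interval property invoked in Part~(3) --- that a Farey vertex, once it drops out of the running triangle $\sigma_k$, never returns; the rest is routine bookkeeping with $\FF_{n-1}$ and with the explicit forms of $R$ and $L$.
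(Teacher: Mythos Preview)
Your proof is correct and follows essentially the same route as the paper's. Part~(1) is identical; in Part~(2) you pull back along $\FF_{n-1}$ to compare at level~$1$ while the paper conjugates to compare at level~$n$, which is the same computation, and you supply an explicit slope argument for the ``only identities'' clause that the paper leaves implicit in the explicit formula for $(Q_{3m,n+1},Q_{3m+1,n+1},Q_{3m+2,n+1})$; in Part~(3) both arguments induct on $n'-n$ via the interval property, which the paper simply asserts (``for every $i$ $(0\le i\le r)$, the slope $s(Q)$ is a vertex of the Farey triangle $\sigma_{n+i}$'') and which you justify through the dual-tree picture.
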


\begin{proof}
(1) Since $\{Q_{m,1}\}_{m\in\ZZ}$ is the sequence of elliptic generators
associated with $\sigma_1$,
$\{\FF_{n-1}(Q_{m,1})\}_{m\in\ZZ}$
is the sequence of elliptic generators
associated with $(\FF_{n-1})_*(\sigma_1)$,
which is equal to $\sigma_n$
by Lemma \ref{lemma.action of FF}(1).
Hence $\{Q_{m,n}\}_{m\in\ZZ}$ is the sequence of elliptic generators
associated with $\sigma_n$.

(2)
Since $\FF_n=(\FF_{n-1}f_n\FF_{n-1}^{-1})\FF_{n-1}$,
we have
\begin{align*}
Q_{k,n+1}
&=
\FF_n(Q_{k,1})\\
&=
(\FF_{n-1}f_n\FF_{n-1}^{-1})\FF_{n-1}(Q_{k,1})\\
&=
\FF_{n-1}f_n\FF_{n-1}^{-1}(Q_{k,n}).
\end{align*}
Since $\FF_{n-1}$ maps $Q_{k,1}$ to $Q_{k,n}$,
this implies that
the triple
$$(Q_{3m,n+1},Q_{3m+1,n+1},Q_{3m+2,n+1})$$
is obtained from the triple
$(Q_{3m,n},Q_{3m+1,n},Q_{3m+2,n})$
by applying the first or second rule in (\ref{DefiningRL})
according as $f_n=R$ or $L$,
where $(A,B,C)$ is replaced with the triple
$(Q_{3m,n},Q_{3m+1,n},Q_{3m+2,n})$.
Namely,
\begin{align*}
& (Q_{3m,n+1},Q_{3m+1,n+1},Q_{3m+2,n+1})\\
& =
\begin{cases}
(Q_{3m,n},\inn{Q_{3m+1,n}} (Q_{3m+2,n}),Q_{3m+1,n}) &
\mbox{if $f_n=R$},\\
(Q_{3m+1,n},\inn{Q_{3m+1,n}}(Q_{3m,n}),Q_{3m+2,n}) &
\mbox{if $f_n=L$}.
\end{cases}
\end{align*}
Hence we obtain (2).

(3)
Since the `if' part is obvious,
we prove the `only if' part.
Suppose $Q_{m,n}=Q_{m',n'}$,
and denote this elliptic generator by $Q$.
We may assume $m'=m+r$ for some non-negative integer $r$.
Then, for every $i$ ($0\le i\le r$),
the slope $s(Q)$ is a vertex of the Farey triangle $\sigma_{n+i}$
and hence $Q$ belongs to the sequence of elliptic generators
associated with $\sigma_{n+i}$.
Thus we can find by (2) a sequence $\{m_i\}_{0\le i\le r}$
of integers such that $m_0=m$ and
\[
(m,n)=(m_0,n)\sim (m_1,n+1) \sim \cdots \sim (m_r, n+r).
\]
Since $Q_{m',n'}=Q_{m,n}=Q_{m_r, n+r}=Q_{m_r, n'}$,
we have $m'=m_r$,
because the sequence of elliptic generators associated with
any Farey triangle consists of mutually distinct elements.
Hence $(m,n)\sim (m_r, n+r)=(m',n')$.
Thus we obtain the desired result.
\end{proof}

The following lemma describes the relation between
the elliptic generators $\{P_{m,n}\}$ in Definition \ref{definition.P(m,n)}
and $\{Q_{m,n}\}$ in Definition \ref{def.Q(m,n)}.

\begin{lemma}
\label{lemma.P(m,n)vsQ(m,n)}
The sets
$\{P_{m,n}\}_{(m,n)\in \ZZ^2}$ and $\{Q_{m,n}\}_{(m,n)\in \ZZ^2}$
are identical. To be precise, the following hold.

{\normalfont(1)}
$P_{2m,n}=Q_{3m+1,n}$ and $P_{2m+1,n}=Q_{3m+d(n),n}$,
where
\[
d(n)=
\begin{cases}
2 & \mbox{if $f_n=R$}, \\
3 & \mbox{if $f_n=L$}.
\end{cases}
\]

{\normalfont(2)} Conversely, the following hold.
\begin{align*}
Q_{3m+1,n} &= P_{2m,n},\\
Q_{3m+2,n} &=
\begin{cases}
P_{2m+1,n} & \mbox{if $f_n=R$},\\
P_{2m+1,n+r} & \mbox{if $f_n=L$},
\end{cases}
\\
Q_{3m+3,n} &=
\begin{cases}
P_{2m+1,n+r} & \mbox{if $f_n=R$},\\
P_{2m+1,n+1} & \mbox{if $f_n=L$},
\end{cases}
\end{align*}
where $r$ is the smallest positive integer such that $f_{n+r}\ne f_n$.
\end{lemma}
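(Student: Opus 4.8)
\emph{Proof proposal.} The plan is to reduce both parts to the explicit form of the elliptic generators attached to $\sigma_1$ together with two conjugation identities for $R$ and $L$. By Definition~\ref{def.Q(m,n)} we have $Q_{m,n}=\FF_{n-1}(Q_{m,1})$, where $(Q_{0,1},Q_{1,1},Q_{2,1})=(A,B,C)$; recalling that $P_{j+3m}=\inn{D^m}(P_j)$ in any sequence of elliptic generators, this gives $Q_{3m,1}=\inn{D^m}(A)$, $Q_{3m+1,1}=\inn{D^m}(B)$, $Q_{3m+2,1}=\inn{D^m}(C)$, and hence, because $\FF_{n-1}$ fixes $D$ and therefore commutes with $\inn{D}$,
\[
Q_{3m,n}=\inn{D^m}\FF_{n-1}(A),\qquad
Q_{3m+1,n}=\inn{D^m}\FF_{n-1}(B),\qquad
Q_{3m+2,n}=\inn{D^m}\FF_{n-1}(C).
\]
On the other side, Lemma~\ref{lemma.RL}(3) records $R(\inn{C}(B))=C$ and $L(\inn{C}(B))=\inn{D}(A)$ (equivalently, a one-line computation in $\pi_1(\OO)$ from $D=CBA$ and $A^2=B^2=C^2=1$). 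These are the only inputs beyond bookkeeping.

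For part (1), $P_{2m,n}=\inn{D^m}\FF_{n-1}(B)=Q_{3m+1,n}$ is immediate from Definition~\ref{definition.P(m,n)}. For $P_{2m+1,n}$, substitute $\FF_n=\FF_{n-1}f_n$ to get $P_{2m+1,n}=\inn{D^m}\FF_{n-1}\bigl(f_n(\inn{C}(B))\bigr)$. If $f_n=R$ this equals $\inn{D^m}\FF_{n-1}(C)=Q_{3m+2,n}$; if $f_n=L$ it equals $\inn{D^m}\FF_{n-1}(\inn{D}(A))=\inn{D^{m+1}}\FF_{n-1}(A)=Q_{3m+3,n}$. The extra factor $\inn{D}$ produced by $L(\inn{C}(B))=\inn{D}(A)$ is exactly what makes $d(n)=2$ in the $R$ case and $d(n)=3$ in the $L$ case.

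For part (2), $Q_{3m+1,n}=P_{2m,n}$ is immediate, and part (1) already identifies $Q_{3m+2,n}$ with $P_{2m+1,n}$ when $f_n=R$ and $Q_{3m+3,n}$ with $P_{2m+1,n}$ when $f_n=L$. It remains to handle $Q_{3m+3,n}$ when $f_n=R$ and $Q_{3m+2,n}$ when $f_n=L$, which I would do by combining part (1) with Lemma~\ref{lemma.Q(m,n)}(2). If $f_n=R$, Lemma~\ref{lemma.Q(m,n)}(2) gives $Q_{3m,n}=Q_{3m,n+1}$ for every $m$; applying this (with $m$ replaced by $m+1$) repeatedly along the maximal run $f_n=f_{n+1}=\dots=f_{n+r-1}=R$ yields $Q_{3m+3,n}=Q_{3m+3,n+r}$, and since $f_{n+r}=L$, part (1) at level $n+r$ identifies this with $P_{2m+1,n+r}$. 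The case $f_n=L$ is symmetric, using $Q_{3m+2,n}=Q_{3m+2,n+1}$ along the maximal $L$-run. Alternatively one can argue straight from the displayed formulas: $Q_{3m+3,n}=\inn{D^{m+1}}\FF_{n-1}(A)$, and since $R$ fixes $A$ the run $f_n=\dots=f_{n+r-1}=R$ forces $\FF_{n+r-1}(A)=\FF_{n-1}(A)$, matching the part-(1) expression for $P_{2m+1,n+r}$; likewise $L$ fixes $C$.

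The argument is mechanical once $R(\inn{C}(B))=C$ and $L(\inn{C}(B))=\inn{D}(A)$ are in hand. The points that require care — and where an error would most easily creep in — are the composition convention $\FF_n=\FF_{n-1}\circ f_n$ and the exact bookkeeping of the powers of $\inn{D}$, since the latter controls both the value of $d(n)$ in part (1) and, in part (2), the precise level $n+r$ to which the $Q$'s descend. I do not anticipate any genuinely hard step.
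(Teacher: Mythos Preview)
Your proof is correct. Part~(2) follows the paper almost verbatim: both of you invoke Lemma~\ref{lemma.Q(m,n)}(2) to propagate $Q_{3m+3,n}$ (resp.\ $Q_{3m+2,n}$) along the maximal $R$-run (resp.\ $L$-run) and then apply part~(1) at level $n+r$; your alternative via $R(A)=A$ and $L(C)=C$ is just the same propagation read off the explicit formulas.

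Part~(1), however, is argued differently. The paper proves $P_{1,n}=Q_{d(n),n}$ by induction on $n$: it checks the base case $n=0$ via $\inn{C}(B)=L^{-1}\inn{D}(A)$ and then, in the inductive step, rewrites $\FF_{n+1}=(\FF_n f_{n+1}\FF_n^{-1})\FF_n$ and uses the shift identity $LR^{-1}(Q_{m,1})=Q_{m+1,1}$ from Lemma~\ref{lemma.RL}(3) to pass from $Q_{d(n),1}$ to $Q_{d(n+1),1}$. You instead compute directly for every $n$ at once, substituting $\FF_n=\FF_{n-1}f_n$ and applying the single-step identities $R(\inn{C}(B))=C$ and $L(\inn{C}(B))=\inn{D}(A)$ (also from Lemma~\ref{lemma.RL}(3)). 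Your route is shorter and avoids the induction entirely; the paper's route, while longer, makes visible the role of the shift $LR^{-1}$ on the elliptic-generator sequence, which connects more transparently to the layered picture used later.
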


\begin{proof}
(1)
Since
\[
P_{2m+k,n}= \inn{D^m}(P_{k,n}), \quad
Q_{3m+k,n} = \inn{D^m}(Q_{k,n}),
\]
we have only to prove the identities when $m=0$.

The first identity is proved as follows.
\[
P_{0,n}=\FF_{n-1}(B)=\FF_{n-1}(Q_{1,1})=Q_{1,n}.
\]

We prove the second identity
for $n \ge 0$
by induction on $n$.
The identity for $n=0$ is proved as follows.
\begin{gather*}
P_{1,0}=\FF_0\inn{C}(B)=\inn{C}(B) =L^{-1}\inn{D}(A)
=L^{-1}(Q_{3,1})\\
=\FF_{-1}(Q_{3,1})=Q_{3,0}=Q_{d(0),0}.
\end{gather*}
In the above, the third identity follows from Lemma \ref{lemma.RL}(3),
the fifth identity follows
from the fact that $\FF_{-1}=f_0^{-1}=L^{-1}$ (see Convention
\ref{convention.Farey}),
and the last identity follows from the convention $f_0=L$.
Now suppose the desired identity holds for some
non-negative integer $n$, namely $P_{1,n}=Q_{d(n),n}$.
Then
\begin{align}
\label{InductiveIdentity}
P_{1,n+1}
&=
\FF_{n+1}\inn{C}(B)
=
 (\FF_nf_{n+1}\FF_n^{-1}) \FF_n \inn{C}(B)
\\
& =
 \FF_nf_{n+1}\FF_n^{-1} (P_{1,n})
=
 \FF_nf_{n+1}\FF_n^{-1} (Q_{d(n),n}) \notag \\
&=
\FF_nf_{n+1}f_n^{-1}\FF_{n-1}^{-1}(Q_{d(n),n})
=
\FF_nf_{n+1}f_n^{-1}(Q_{d(n),1}). \notag
\end{align}
If $f_{n+1}=f_n$, then $d(n)=d(n+1)$ and hence the last element in
the above identity is equal to
\[
\FF_n(Q_{d(n),1})=Q_{d(n),n+1}=Q_{d(n+1),n+1},
\]
and therefore the desired identity holds for $n+1$.
Suppose $(f_n,f_{n+1})=(R,L)$.
By Lemma \ref{lemma.RL}(3), we have
\[
f_{n+1}f_n^{-1}(Q_{m,1})=LR^{-1}(Q_{m,1})=Q_{m+1,1}.
\]
Hence the last term of (\ref{InductiveIdentity}) is equal to
\[
\FF_n(Q_{d(n)+1,1})=Q_{d(n)+1,n+1}=Q_{d(n+1),n+1}.
\]
Here the last identity follows from
the fact that $(d(n),d(n+1))=(2,3)$.
Thus the desired identity
holds
for $n+1$.
The case $(f_n,f_{n+1})=(L,R)$ is treated similarly.
Thus we have proved, by induction,
the desired identity for every non-negative integer $n$.
By a parallel argument, we can also show that the identity holds
for every integer $n$.

(2)
Suppose $f_n=R$.
Then, by Lemma \ref{lemma.P(m,n)vsQ(m,n)}(1), we have
\[
Q_{3m+1,n}= P_{2m,n},\quad
Q_{3m+2,n}= P_{2m+1,n}.
\]
By Lemma \ref{lemma.Q(m,n)}(2),
we have $Q_{3m+3,n}=Q_{3m+3,n+1}$.
By applying the same lemma repeatedly, we have
\[
Q_{3m+3,n} = Q_{3m+3,n+1}= \cdots
=Q_{3m+3,n+r},
\]
because $f_n=\cdots = f_{n+r-1}=R$.
Since $f_{n+r}=L$, Lemma \ref{lemma.P(m,n)vsQ(m,n)}(1) implies
$Q_{3m+3,n+r}=P_{2m+1,n+r}$.
Hence we obtain the desired identity.
A similar argument also works for the case $f_n=L$.
\end{proof}

The following lemma completely describes
the identities among the elliptic generators
$\{P_{m,n}\}$ and $\{Q_{m,n}\}$.

\begin{lemma}
\label{lemma.P(m,n)vsQ(m,n)2}
{\normalfont(1)} If $f_n=R$,
then
$P_{2m,n}=P_{2m+1,n_+}$.
Moreover $Q_{m',n'}$ is equal to this element
if and only if $(m',n')$ belongs to the following set.
\[
\{(3m+1,n)\}\cup\{(3m+2,k) \mid n+1\le k\le n_+\}.
\]

{\normalfont(2)} If $f_n=L$, then $P_{2m,n}=P_{2m-1,n_+}$.
Moreover $Q_{m',n'}$ is equal to this element
if and only if $(m',n')$ belongs to the following set.
\[
\{(3m+1,n)\}\cup\{(3m,k)\mid n+1\le k\le n_+\}.
\]

{\normalfont(3)} The following are the only identities among
the $P_{m,n}$.
\[
P_{2m,n}
=
\begin{cases}
P_{2m+1,n_+}
& \mbox{if $f_n=R$,}\\
P_{2m-1,n_+}
& \mbox{if $f_n=L$.}
\end{cases}
\]
\end{lemma}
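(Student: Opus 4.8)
\emph{Proof plan.} The plan is to reduce the entire statement to bookkeeping about the elliptic generators $Q_{m,n}$, for which Lemmas~\ref{lemma.Q(m,n)} and~\ref{lemma.P(m,n)vsQ(m,n)} already supply a complete dictionary. The starting point is Lemma~\ref{lemma.P(m,n)vsQ(m,n)}(1): it identifies the index pair of each $P$-generator with the index pair of a $Q$-generator via $(2\alpha,\beta)\mapsto(3\alpha+1,\beta)$, and $(2\alpha+1,\beta)\mapsto(3\alpha+2,\beta)$ if $f_\beta=R$, while $(2\alpha+1,\beta)\mapsto(3\alpha+3,\beta)$ if $f_\beta=L$. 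This correspondence is injective, and its image consists of the pairs $(c,d)$ with $c\equiv 1\pmod 3$, together with those with $c\equiv 2\pmod 3$ and $f_d=R$, and those with $c\equiv 0\pmod 3$ and $f_d=L$. Since $P_{a,b}=P_{a',b'}$ holds exactly when the associated $Q$-index pairs are equivalent under the relation of Lemma~\ref{lemma.Q(m,n)}(3), the whole lemma becomes a matter of computing the relevant equivalence classes and intersecting them with this image.

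For parts~(1) and~(2), I would compute the equivalence class of $(3m+1,n)$ under the relation of Lemma~\ref{lemma.Q(m,n)}(3). Suppose $f_n=R$. The relation attached to $n$ gives $(3m+1,n)\sim(3m+2,n+1)$, and, since $f_k=L$ for every $k$ with $n<k<n_+$, the relation attached to each such $k$ gives $(3m+2,k)\sim(3m+2,k+1)$; hence the class contains $\{(3m+1,n)\}\cup\{(3m+2,k)\mid n+1\le k\le n_+\}$. The step requiring care is that the class contains nothing more: examining the two elementary relations attached to $n-1$, to each intermediate index $n+1\le k\le n_+-1$, and to $n_+$ (where the label is again $R$), one checks that none of the listed pairs occurs on the opposite side of any further relation, using only that $3m$, $3m+1$, $3m+2$ are pairwise incongruent modulo $3$. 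Since $f_{n_+}=R$, the endpoint $(3m+2,n_+)$ lies in the image of the dictionary and corresponds to $P_{2m+1,n_+}$; therefore $P_{2m,n}=P_{2m+1,n_+}$, and the displayed set is exactly the set of $(m',n')$ with $Q_{m',n'}$ equal to this element. Part~(2) is obtained from part~(1) by interchanging $R$ and $L$: the class of $(3m+1,n)$ is then $\{(3m+1,n)\}\cup\{(3m,k)\mid n+1\le k\le n_+\}$, whose endpoint $(3m,n_+)$, with $f_{n_+}=L$, corresponds to $P_{2m-1,n_+}$.

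Part~(3) should then follow formally. First, every odd-indexed generator equals an even-indexed one: if $f_n=R$, applying part~(1) at $n_-$ (where $f_{n_-}=R$ and $(n_-)_+=n$) gives $P_{2m,n_-}=P_{2m+1,n}$, and if $f_n=L$, applying part~(2) at $n_-$ gives $P_{2m-1,n}=P_{2m,n_-}$, that is, $P_{2m+1,n}=P_{2m+2,n_-}$. Hence it suffices to list, for a fixed even-indexed $P_{2m,n}$, the generators $P_{m',n'}$ equal to it, and by the reduction of the first paragraph these are pulled back by the dictionary from the intersection of the equivalence class of $(3m+1,n)$ with the image of the dictionary. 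When $f_n=R$ that intersection is $\{(3m+1,n),(3m+2,n_+)\}$ --- the pair $(3m+2,n_+)$ being the only member of the class whose first coordinate is $\equiv 2\pmod 3$ and whose second index carries the label $R$, and the class containing no pair with first coordinate $\equiv 0\pmod 3$ --- so these pull back to $P_{2m,n}$ and $P_{2m+1,n_+}$; when $f_n=L$ the intersection is $\{(3m+1,n),(3m,n_+)\}$, pulling back to $P_{2m,n}$ and $P_{2m-1,n_+}$. Thus the identities displayed in~(3) are the only ones. I expect the main obstacle to be the no-branching argument of the second paragraph --- pinning the equivalence class down exactly rather than merely showing it is connected --- after which the residue-modulo-$3$ bookkeeping of the last paragraph is routine but must be carried out carefully, since it is precisely this bookkeeping that excludes the spurious coincidences arising from $Q$-indices sitting at levels whose label is the `wrong' one for being a $P$.
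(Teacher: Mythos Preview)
Your treatment of parts (1) and (2) is the paper's own argument: start from $P_{2m,n}=Q_{3m+1,n}$, trace the elementary relations of Lemma~\ref{lemma.Q(m,n)}(3) upward through the block of opposite letters, and check terminality at both ends. The paper records the terminality check in the parenthetical ``there are no elementary relations relating $(3m+1,n)$ with $(*,n-1)$ and those relating \dots with $(*,n_{+}+1)$'', which is exactly your residue-mod-$3$ observation.

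For part (3) you take a different route. The paper does not return to the equivalence classes at all; instead it appeals to Lemma~\ref{lemma.action of FF}(2),(3) to read off the slope $s(P_{r,s})$ as a specific vertex of $\sigma_s$, argues that this slope information forces $s=s'$ when $r,r'$ have the same parity (and $s'=s_+$ when they do not), and then finishes with the observation that $P_{r,s}$ and $P_{r',s}$ with the same parity differ by a power of $\inn{D}$. Your argument is more self-contained: having pinned down the full equivalence class in (1) and (2), you simply intersect it with the image of the $P\to Q$ dictionary and observe that only two members survive. This avoids any appeal to slopes or Lemma~\ref{lemma.action of FF}, at the cost of the mod-$3$ bookkeeping you flag at the end. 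The paper's approach is a bit quicker and makes the Farey-triangle geometry do the work; yours stays purely inside the combinatorics of Lemmas~\ref{lemma.Q(m,n)} and~\ref{lemma.P(m,n)vsQ(m,n)}. Both are correct.
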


\begin{proof}
(1) Suppose $f_n=R$. Then by Lemmas \ref{lemma.P(m,n)vsQ(m,n)}(1) and
\ref{lemma.Q(m,n)}(2), we see
\[
P_{2m,n}=Q_{3m+1,n}=Q_{3m+2,n+1}.
\]
Since $f_k=L$ for every $k$ ($n+1\le k\le n_+-1$),
Lemma \ref{lemma.Q(m,n)}(2) implies
that the above element is equal to $Q_{3m+2,k}$
for every $k$ ($n+1\le k\le n_+$).
Moreover, by Lemma \ref{lemma.Q(m,n)}(3),
we see that $Q_{m',n'}$ is equal to $Q_{3m+1,n}$
if and only if $(m',n')$ appears in the above.
(Check that there are no elementary relations
relating $(3m+1,n)$ with $(*,n-1)$
and those relating $(3m,n_+)$ with $(*,n_++1)$.)
Since $f_{n_+}=f_n=R$,
we have $Q_{3m+2,n_+}=P_{2m+1,n_+}$
by Lemma \ref{lemma.P(m,n)vsQ(m,n)}(2).
This completes the proof of (1).

(2) is proved by an argument parallel to that of (1).

(3) The desired identity is already proved by (1) and (2).
Let $(r,s)$ and $(r',s')$ be elements of $\ZZ^2$
such that $P_{r,s}=P_{r',s'}$.
Suppose first that $r$ and $r'$ have the same parity.
Then we have $s=s'$ by Lemma \ref{lemma.action of FF}(2),(3).
So, by Definition \ref{definition.P(m,n)},
$P_{r',s'}=P_{r',s}$ is the conjugate of
$P_{r,s}$ by $D^{(r'-r)/2}$.
Since $P_{r,s}=P_{r',s'}$,
this implies $r=r'$ and hence $(r,s)=(r',s')$.
Suppose $r$ and $r'$ have different parity,
say $r=2m$ and $r'=2m'+1$ for some $m,m'\in\ZZ$.
Then, by Lemma \ref{lemma.action of FF}(2),(3) again,
$s'$ is uniquely determined by $s$.
On the other hand, we have $P_{r,s}=P_{2m,s}$ is equal to
$P_{2m+1,s_+}$ or $P_{2m-1,s_+}$ according as $f_s=R$ or $L$.
Thus we have $s'=s_+$.
By the argument for the same-parity case,
we see that $s'$ is equal to $2m+1$ or $2m-1$
according as $f_s=R$ or $L$.
Thus we obtain the conclusion.
\end{proof}

\begin{remark}
\rm
As observed in Remark \ref{remark.CW-vertices},
the assertion (3) in the above lemma is essentially equivalent to
the identities in Proposition \ref{prop.verticesCW}(2).
\end{remark}

Lemmas \ref{lemma.Q(m,n)} and \ref{lemma.P(m,n)vsQ(m,n)}
motivate us to introduce the following CW-decomposition
of $\RR^2$ (see Figure \ref{fig.CWD}).

\begin{definition}
\label{definition.parents-complex}
\rm
$\cellcomplex(\varphi)$ denotes the CW-decomposition
of $\RR^2$ defined as follows.
The vertex set is equal to $\ZZ^2$,
where $(m,n)$ is labeled with $Q_{m,n}$, and
the edge set consists of the \textit{horizontal edges},
the \textit{vertical edges} and the \textit{slanted edges},
which are described as follows.
\begin{enumerate}
\item
The \textit{horizontal edges} are:
\[
\langle (m,n), (m+1,n)\rangle.
\]
\item
The \textit{vertical edges} are:
\begin{align*}
&\langle (3m+1,n), (3m+1,n+1)\rangle, \\
&\langle (3m+d(n),n), (3m+d(n+1),n+1)\rangle.
\end{align*}

\item
For each $n\in\ZZ$,
the \textit{slanted edges} lying between
$\RR\times\{n\}$ and $\RR\times\{n+1\}$
are:

if $f_n=R$,
\begin{align*}
&\langle (3m,n),(3m,n+1)\rangle,\\
&\langle (3m+1,n),(3m+2,n+1)\rangle,
\end{align*}
and, if $f_n=L$,
\begin{align*}
&\langle (3m+1,n),(3m,n+1)\rangle,\\
&\langle (3m+2,n),(3m+2,n+1)\rangle.
\end{align*}
\end{enumerate}
\end{definition}

\begin{figure}[p]
\begin{center}
\setlength{\unitlength}{1truecm}
\begin{picture}(7,17)
\put(1,0.3){\epsfig{file=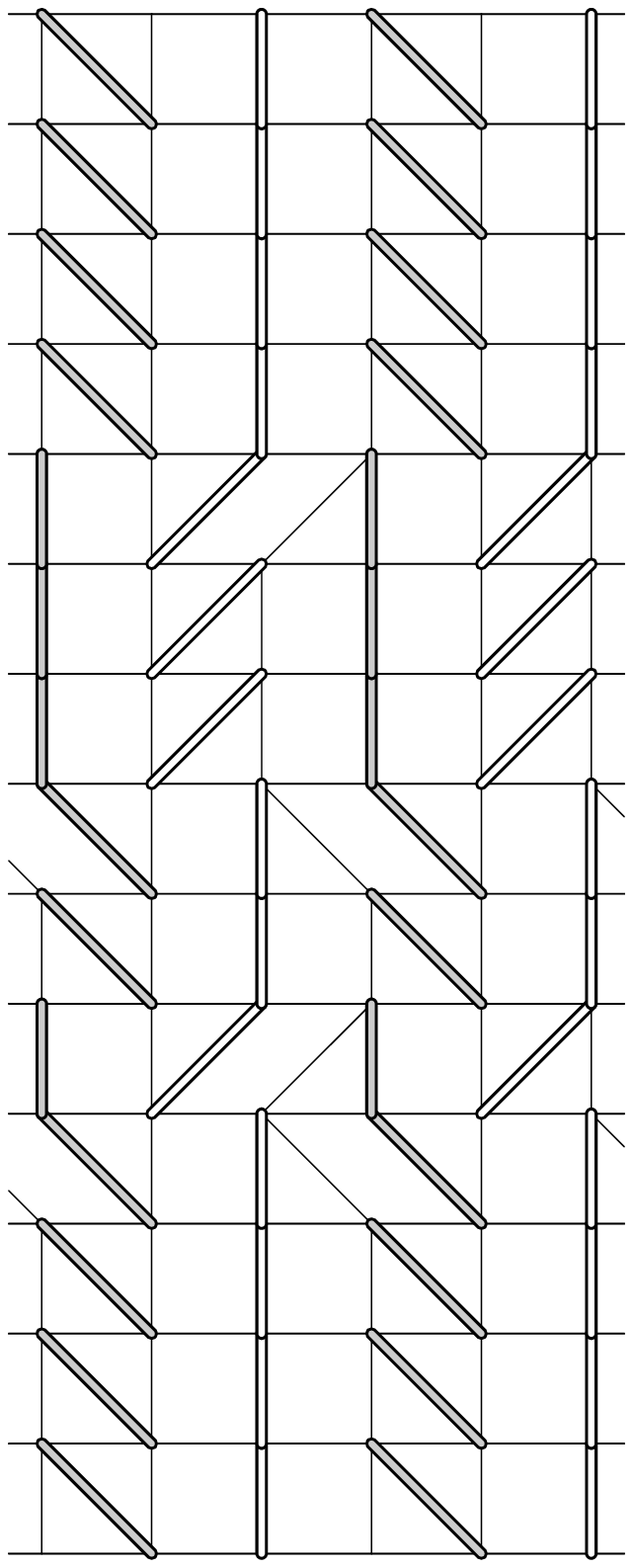, width =6.5truecm}}
\put(1.3,0){\makebox(0,0)[c]{$0$}}
\put(2.5,0){\makebox(0,0)[c]{$1$}}
\put(3.7,0){\makebox(0,0)[c]{$2$}}
\put(4.75,0){\makebox(0,0)[l]{$3$}}
\put(6,0){\makebox(0,0)[c]{$4$}}
\put(7.1,0){\makebox(0,0)[c]{$5$}}
\put(0.7,.7){\makebox(0,0)[r]{$-3$}}
\put(0.7,1.3){\makebox(0,0)[r]{$L$}}
\put(0.7,1.85){\makebox(0,0)[r]{$-2$}}
\put(0.7,2.45){\makebox(0,0)[r]{$L$}}
\put(0.7,3.0){\makebox(0,0)[r]{$-1$}}
\put(0.7,3.6){\makebox(0,0)[r]{$L$}}
\put(0.7,4.15){\makebox(0,0)[r]{$0$}}
\put(0.7,4.75){\makebox(0,0)[r]{$L$}}
\put(0.7,5.30){\makebox(0,0)[r]{$1$}}
\put(0.7,5.9){\makebox(0,0)[r]{$R$}}
\put(0.7,6.45){\makebox(0,0)[r]{$2$}}
\put(0.7,7.05){\makebox(0,0)[r]{$L$}}
\put(0.7,7.6){\makebox(0,0)[r]{$3$}}
\put(0.7,8.2){\makebox(0,0)[r]{$L$}}
\put(0.7,8.75){\makebox(0,0)[r]{$4$}}
\put(0.7,9.35){\makebox(0,0)[r]{$R$}}
\put(0.7,9.9){\makebox(0,0)[r]{$5$}}
\put(0.7,10.5){\makebox(0,0)[r]{$R$}}
\put(0.7,11.1){\makebox(0,0)[r]{$6$}}
\put(0.7,11.65){\makebox(0,0)[r]{$R$}}
\put(0.7,12.2){\makebox(0,0)[r]{$7$}}
\put(0.7,12.8){\makebox(0,0)[r]{$L$}}
\put(0.7,13.385){\makebox(0,0)[r]{$8$}}
\put(0.7,13.98){\makebox(0,0)[r]{$L$}}
\put(0.7,14.55){\makebox(0,0)[r]{$9$}}
\put(0.7,15.15){\makebox(0,0)[r]{$L$}}
\put(0.7,15.7){\makebox(0,0)[r]{$10$}}
\put(0.7,16.3){\makebox(0,0)[r]{$L$}}
\put(0.7,16.8){\makebox(0,0)[r]{$11$}}
\end{picture}
\caption{The CW-complex $\cellcomplex(\varphi)$.
The horizontal line segments represent horizontal edges,
the thin non-horizontal line segments represent vertical edges, and
the thick \, non-horizontal \, line \, segments \, represent \, slanted  edges.}\label{fig.CWD}
\end{center}
\end{figure}

For each $n\in\ZZ$,
the horizontal edges $\{\langle (m,n), (m+1,n)\rangle\}_{m\in\ZZ}$
of $\cellcomplex(\varphi)$
correspond to the edges of the layer
$\LL(\sigma_n)$ of $\Delta^*(\varphi)$
(cf. Proposition \ref{prop.model-Delta}(4)
and Lemma \ref{lemma.Q(m,n)}(1)).
For each $m\in\ZZ$,
the vertical edges
\[
\{\langle (3m+1,n), (3m+1,n+1)\rangle\}_{n\in\ZZ}
\]
correspond to the edges in the vertical line $\partial_{2m}$ in
$CW(\varphi)$,
and the vertical edges
\[
\{\langle (3m+d(n),n),
(3m+d(n+1),n+1)\rangle\}_{n\in\ZZ}
\]
correspond to the edges in the vertical line $\partial_{2m+1}$
in $CW(\varphi)$
(cf. Proposition \ref{prop.verticesCW}(2) and
Lemma \ref{lemma.P(m,n)vsQ(m,n)}(1)).
The slanted edges correspond to
the elementary relations in Lemma \ref{lemma.Q(m,n)}(3).
Thus $\cellcomplex(\varphi)$ can be regarded as
a
common parent
of the two tessellations $\Delta(\varphi)$ and $CW(\varphi)$.
To be precise, the following proposition holds.

\begin{proposition}
\label{prop.quotient-cellcomplex}
{\normalfont(1)} Let $\Delta^{**}(\varphi)$ be the CW-complex obtained from
$\cellcomplex(\varphi)$ by removing
$($the interior  of\,$)$ each  vertical edge  and collapsing
 each slanted edge to a point.
Then $\Delta^{**}(\varphi)$ is combinatorially isomorphic to
$\Delta^*(\varphi)$ $($and hence to $\Delta(\varphi))$,
where the vertex of $\Delta^{**}(\varphi)$
represented by $(m,n)$ corresponds to the vertex
$Q_{m,n}$ of $\Delta^*(\varphi)$.
Here, we employ the identification
described by Proposition \ref{prop.model-Delta}.

{\normalfont(2)} Let $CW^{**}(\varphi)$ be the CW-complex obtained from
$\cellcomplex(\varphi)$ by removing
$($the interior  of\,$)$ each horizontal edge  and collapsing
each slanted edge to a point.
Then $CW^{**}(\varphi)$ is combinatorially isomorphic to $CW^{*}(\varphi)$
$($and hence to $CW(\varphi))$,
where the vertex of $CW^{**}(\varphi)$
represented by $(3m+1,n)$ $($resp. $(3m+d(n),n))$
corresponds to the vertex $P_{2m,n}$ $($resp. $P_{2m+1,n})$
of $CW^{*}(\varphi)$.
Here, we employ the identification
described by Remark \ref{remark.CW-vertices}.
\end{proposition}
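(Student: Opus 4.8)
The plan is to treat both $\Delta^{**}(\varphi)$ and $CW^{**}(\varphi)$ as explicit quotients of the common parent $\cellcomplex(\varphi)$, and then to identify the first with $\Delta^*(\varphi)$ by means of the $Q$-labelling and the second with $CW^*(\varphi)$ by means of the $P$-labelling, using the dictionaries assembled in Lemmas~\ref{lemma.Q(m,n)}, \ref{lemma.P(m,n)vsQ(m,n)} and \ref{lemma.P(m,n)vsQ(m,n)2}. The one preliminary point is that $\cellcomplex(\varphi)$ really is a locally finite CW-decomposition of $\RR^2$: directly from the coordinates in Definition~\ref{definition.parents-complex} (or by inspection of Figure~\ref{fig.CWD}) the horizontal, vertical and slanted edges between two consecutive rows $\RR\times\{n\}$ and $\RR\times\{n+1\}$ are pairwise non-crossing, so the complementary regions are open disks and serve as the $2$-cells, and between consecutive rows one obtains a locally finite, periodic strip of triangles and quadrilaterals.

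The crucial remark is that the slanted edges of $\cellcomplex(\varphi)$ are precisely, letter for letter, the elementary relations of Lemma~\ref{lemma.Q(m,n)}(3); hence collapsing every closed slanted edge to a point identifies $(m,n)$ with $(m',n')$ exactly when $Q_{m,n}=Q_{m',n'}$, which by Lemma~\ref{lemma.P(m,n)vsQ(m,n)} is also exactly when the corresponding identity among the $P_{m,n}$ of Lemma~\ref{lemma.P(m,n)vsQ(m,n)2}(3) holds. Since the runs of $R$'s and of $L$'s in $\Omega$ are bounded, these slanted edges form pairwise disjoint finite arcs, so collapsing them keeps the underlying space homeomorphic to $\RR^2$, and deleting the interior of an edge that separates two distinct $2$-cells merely coarsens the cell structure; thus $\Delta^{**}(\varphi)$ and $CW^{**}(\varphi)$ are genuine CW-decompositions of $\RR^2$, each carrying a quotient map from $\cellcomplex(\varphi)$.

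For~(1), after deleting the vertical edges and collapsing the slanted edges, the vertices of $\Delta^{**}(\varphi)$ are the classes $[(m,n)]$, which by the crucial remark biject with $\{Q_{m,n}\}$; by Lemma~\ref{lemma.Q(m,n)}(1) and Proposition~\ref{prop.model-Delta}(1) this is the vertex set of $\Delta^*(\varphi)$. The surviving edges are the horizontal ones, with $\langle(m,n),(m+1,n)\rangle$ going to the pair $(Q_{m,n},Q_{m+1,n})$ of successive elliptic generators of $\sigma_n$; by Proposition~\ref{prop.model-Delta}(2) this is a bijection onto the edge set of $\Delta^*(\varphi)$, injectivity holding because a successive pair $(P,P')$ of elliptic generators determines the third member $P''=DPP'$ of the triple, hence the Farey triangle, hence $n$, and then $m$. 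The $2$-cells are handled through the strip structure: deleting the vertical edges fuses the triangles and quadrilaterals of a strip, and each collapse of a slanted edge then removes one side, the survivors being, via the identities of Lemma~\ref{lemma.Q(m,n)}(2), exactly the successive triples of $\sigma_n$ with extreme vertices successive in $\sigma_{n\pm1}$ (equivalently, the successive triples of $\sigma_{n\pm1}$ with extreme vertices successive in $\sigma_n$) described in Proposition~\ref{prop.model-Delta}(3); one verifies this, and that nothing is lost or gained, by a direct count (two faces per period between consecutive rows).

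For~(2) one runs the dual argument with the $P$-labelling: after deleting the horizontal edges and collapsing the slanted edges, the vertex $(3m+1,n)$, resp.\ $(3m+d(n),n)$, of $\cellcomplex(\varphi)$ becomes $P_{2m,n}$, resp.\ $P_{2m+1,n}$ (Lemma~\ref{lemma.P(m,n)vsQ(m,n)}(1)), and the surviving vertical edges become the edges of the lines $\partial_{2m}$ and $\partial_{2m+1}$, as noted after Definition~\ref{definition.parents-complex} via Proposition~\ref{prop.verticesCW}; comparing with $CW'(\varphi)$ (Definition~\ref{def.CW'}) once its slanted edges are likewise collapsed, one reads off a colour-preserving combinatorial isomorphism onto $CW^*(\varphi)$ sending $[(3m+1,n)]$ to $[(2m,n)]$ and $[(3m+d(n),n)]$ to $[(2m+1,n)]$ (colours matching because the strip $[m,m+1]\times\RR$ of $\cellcomplex(\varphi)$ is the preimage of the white/gray strip bounded by $\partial_m$ and $\partial_{m+1}$), and Theorem~\ref{Th.CDTessellation} together with Remark~\ref{remark.CW-vertices} then finish. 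I expect the real work, and the main obstacle, to be precisely the two $2$-cell verifications: enumerating the triangles and quadrilaterals of $\cellcomplex(\varphi)$ between consecutive rows in the four cases $(f_n,f_{n+1})\in\{R,L\}^2$, tracking which cells fuse when a vertical (resp.\ horizontal) edge is deleted and which degenerate when a slanted edge is collapsed, and confirming the survivors against Proposition~\ref{prop.model-Delta}(3) (resp.\ the faces of $CW^*(\varphi)$); a secondary nuisance is reconciling the $3$-periodic $Q$-indexing natural to $\Delta^*(\varphi)$ with the $2$-periodic $P$-indexing natural to $CW^*(\varphi)$, which is exactly what Lemma~\ref{lemma.P(m,n)vsQ(m,n)} is designed to handle.
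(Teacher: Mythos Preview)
Your approach is correct and essentially the same as the paper's: both identify the vertex set of the quotient with $\{Q_{m,n}\}$ via Lemma~\ref{lemma.Q(m,n)}(3) (the slanted edges encode exactly the elementary relations), and then extend to edges and faces by appealing to Proposition~\ref{prop.model-Delta} for~(1) and to the correspondences recorded in the paragraph preceding the proposition for~(2). The paper's version is considerably terser than yours; in particular, the $2$-cell verification you anticipate as the main obstacle is not carried out explicitly in the paper but is simply asserted to follow from those characterizations, so you are doing more bookkeeping than the authors deemed necessary.
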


\begin{proof}
(1) By Proposition \ref{prop.model-Delta} and Lemma \ref{lemma.Q(m,n)}(1),
the vertex set of $\Delta^*(\varphi)$ is identified with the
set $\{Q_{m,n}\}_{(m,n)\in \ZZ^2}$.
On the other hand,
since the slanted edges of $\cellcomplex(\varphi)$
correspond to the elementary relations in
Lemma \ref{lemma.Q(m,n)}(3),
the vertex set of $\Delta^{**}(\varphi)$ is identified with
the set $\{Q_{m,n}\}_{(m,n)\in \ZZ^2}$.
Hence there is a natural bijection between the vertex sets of
$\Delta^*(\varphi)$ and $\Delta^{**}(\varphi)$.
By virtue of the characterization of the edge set and the face set
of $\Delta^*(\varphi)$
by Proposition \ref{prop.model-Delta},
the bijection between the vertex sets of
$\Delta^*(\varphi)$ and $\Delta^{**}(\varphi)$
extends
to an isomorphism
between the two CW-complexes.

(2)
By Remark \ref{remark.CW-vertices},
the vertex set of $CW^{*}(\varphi)$ is identified with the set
$\{P_{m,n}\}_{(m,n)\in \ZZ^2}$,
which in turn is equal to the set
$$\{Q_{m,n}\}_{(m,n)\in \ZZ^2}$$
by Lemma \ref{lemma.P(m,n)vsQ(m,n)}(1).
On the other hand, as in the proof of
Proposition \ref{prop.quotient-cellcomplex}(1),
the vertex set of $CW^{**}(\varphi)$ is also identified with
the set $\{Q_{m,n}\}_{(m,n)\in \ZZ^2}$.
Thus there is a natural bijection
between the vertex sets of $CW^*(\varphi)$ and $CW^{**}(\varphi)$.
By the correspondences mentioned in the paragraph
preceding
the proposition,
this bijection
extends
to an isomorphism
between the two CW-complexes.
\end{proof}

\begin{remark}
\rm
The subcomplex of $\cellcomplex(\varphi)$ obtained by
deleting the vertical edges is a tessellation by four types of pentagons.
These pentagons, which become triangles after slanted-edge collapsing,
can be thought of as being equal to the pentagons that appeared in
Figure~\ref{figure.EGS}.
\end{remark}

We now give a proof of the first assertion of Theorem \ref{maintheorem}.
By Theorem \ref{thm.canonical-decomposition}
and Lemma \ref{lemma.Q(m,n)}(1),
the vertex set of $\Delta(\varphi)$ is equal to
$\{\rho_{\CCC}(Q_{m,n})(\infty)\}_{(m,n)\in \ZZ^2}$.
By the definition of $CW(\varphi)$,
the vertex set of $CW(\varphi)$ is equal to
$\{\rho_{\CCC}(P_{m,n})(\infty)\}_{(m,n)\in \ZZ^2}$.
Since these two sets are identical
by Lemma \ref{lemma.P(m,n)vsQ(m,n)},
we obtain the first assertion of Theorem \ref{maintheorem}.

\vskip0.5cm

Next,
we show that
the combinatorial structure of the colored
$\langle D \rangle$-CW-complex
$CW(\varphi)$
can be
recovered from that of
the layered $\langle D \rangle$-simpli\-cial complex
$(\Delta(\varphi), \{\LL(\rho_{\CCC},\sigma_n)\})$.
In the following, we identify
$\Delta(\varphi)$
with $\Delta^{**}(\varphi)$
and $CW(\varphi)$ with $CW^{**}(\varphi)$.
We also employ the identification
described by Proposition \ref{prop.model-Delta}
and Remark \ref{remark.CW-vertices}.
Thus the vertices of $\Delta(\varphi)$ and
$CW(\varphi)$ are represented by elliptic generators,
and in particular,
the layer $\LL(\rho_{\CCC},\sigma_n)$ of $\Delta(\varphi)$
is identified with the following bi-infinite family
of edges of $\Delta^{**}(\varphi)$.
\[
\LL_n:=\{\langle Q_{m,n}, Q_{m+1,n}\rangle\}_{m\in\ZZ}
\]

\begin{definition}
\label{def.apex}
\rm
A vertex of $\LL_n$ is called an \textit{upward apex}
(resp. a \textit{downward apex})
if it is not a vertex of $\LL_{n-1}$ (resp. $\LL_{n+1}$)
(see Figure \ref{fig.trucated-tetrahedron}).
\end{definition}

\begin{lemma}
\label{lemma.apex}
{\normalfont(1)} For each $n\in\ZZ$,
a vertex of $\LL_n$ is an upward apex of $\LL_n$
if, and only if, it is equal to $P_{2m,n}$ for some $m\in\ZZ$.
Moreover,
$P_{2m,n+1}$ is a vertex of $\LL_{n+1}$
adjacent to $P_{2m,n}$ in $\LL_{n+1}$.
Furthermore, $P_{2m,n+1}$ is the unique upward apex of $\LL_{n+1}$
which is adjacent to $P_{2m,n}$ in $\Delta(\varphi)$.

{\normalfont(2)} For each $n\in\ZZ$,
a vertex of $\LL_n$ is
a
downward apex of $\LL_n$
if, and only if, it is equal to $P_{2m+1,n}$ for some $m\in\ZZ$.
Moreover, $P_{2m+1,n-1}$ is a vertex of $\LL_{n-1}$
adjacent to $P_{2m+1,n}$ in $\LL_{n-1}$.
Furthermore, $P_{2m+1,n-1}$ is the unique
downward
apex of $\LL_{n-1}$
which is adjacent to $P_{2m+1,n}$ in $\Delta(\varphi)$.
\end{lemma}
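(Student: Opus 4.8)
The plan is to carry out everything inside the combinatorial model $\Delta^{**}(\varphi)$ of Proposition~\ref{prop.quotient-cellcomplex}(1): its vertices are the elliptic generators $Q_{m,n}$, two of them span an edge exactly when they are consecutive in the sequence associated with some $\sigma_n$ (Proposition~\ref{prop.model-Delta}(2) and Lemma~\ref{lemma.Q(m,n)}(1)), and $\LL_n$ is the broken line $\{\langle Q_{m,n},Q_{m+1,n}\rangle\}_{m\in\ZZ}$. The first task is to identify the upward apices of $\LL_n$, which by Definition~\ref{def.apex} are the vertices of $\LL_n$ absent from $\LL_{n-1}$. Applying Lemma~\ref{lemma.Q(m,n)}(2) with $n-1$ in place of $n$, and distinguishing $f_{n-1}=R$ from $f_{n-1}=L$, one reads off that in either case the vertices of $\LL_n$ that also lie on $\LL_{n-1}$ are precisely those $Q_{m,n}$ with $m\not\equiv 1\pmod 3$; the ``only identities'' clause of that lemma shows the remaining vertices $Q_{3m+1,n}$ are genuinely new. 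Since $Q_{3m+1,n}=P_{2m,n}$ by Lemma~\ref{lemma.P(m,n)vsQ(m,n)}(1), the upward apices of $\LL_n$ are exactly $\{P_{2m,n}\}_{m\in\ZZ}$. Running the identical comparison between $\LL_n$ and $\LL_{n+1}$ (using the ``only identities'' between those two layers) shows symmetrically that the downward apices of $\LL_n$ are exactly $\{P_{2m+1,n}\}_{m\in\ZZ}$; this is the opening assertion of each of (1) and (2).

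For the ``moreover'' clause of (1), locate $P_{2m,n}$ on $\LL_{n+1}$ itself: by Lemma~\ref{lemma.Q(m,n)}(2), $P_{2m,n}=Q_{3m+1,n}$ equals $Q_{3m+2,n+1}$ if $f_n=R$ and $Q_{3m,n+1}$ if $f_n=L$, so $P_{2m,n}$ is a vertex of $\LL_{n+1}$, and in both cases one of its two neighbours along $\LL_{n+1}$ is $Q_{3m+1,n+1}=P_{2m,n+1}$ --- an upward apex of $\LL_{n+1}$ by the previous paragraph --- while the other neighbour has index $\equiv 0$ or $\equiv 2\pmod 3$ and hence is not an upward apex. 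This proves the ``moreover'' clause and, at the same time, the uniqueness of the upward apex of $\LL_{n+1}$ adjacent to $P_{2m,n}$ \emph{along $\LL_{n+1}$}.

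It remains to upgrade this to uniqueness in $\Delta(\varphi)$, i.e.\ to exclude an edge of $\Delta(\varphi)$ joining $P_{2m,n}$ to some other upward apex $P_{2j,n+1}=Q_{3j+1,n+1}$ of $\LL_{n+1}$ that does not lie on $\LL_{n+1}$. The tool is Lemma~\ref{lemma.P(m,n)vsQ(m,n)2}: it lists exactly the pairs $(r,s)$ with $Q_{r,s}=P_{2m,n}$, hence exactly the layers $\LL_s$ (with $s=n,\dots,n_+$) that carry $P_{2m,n}$ and its index on each of them, so that the pair of $\Delta(\varphi)$-neighbours contributed by each such $\LL_s$ can be written down explicitly as elliptic generators $Q_{\bullet,s}$; using Lemma~\ref{lemma.P(m,n)vsQ(m,n)2} once more --- now to recognise which elliptic generators are the upward apices $P_{2j,n+1}$ of $\LL_{n+1}$ --- one checks that $P_{2m,n+1}$ is the only neighbour of $P_{2m,n}$ that qualifies. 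Part (2) follows by the mirror of this entire argument: interchange $\LL_{n-1}\leftrightarrow\LL_{n+1}$, $P_{2m,\bullet}\leftrightarrow P_{2m+1,\bullet}$ and $R\leftrightarrow L$, using the second (rather than the first) identities in parts (2) and (3) of Lemma~\ref{lemma.Q(m,n)} together with the $f_n=L$ clauses of Lemma~\ref{lemma.P(m,n)vsQ(m,n)}. I expect the main obstacle to be precisely the bookkeeping in this last step: one is juggling the two parities of the first index of a $P$, the three residues modulo $3$ of the first index of a $Q$, and the way $n_+$ and $n_-$ interact with the $R/L$ pattern of $\Omega$, and it is exactly here that an off-by-one in an index shift can creep in (in particular one must be careful about which edges of $\Delta(\varphi)$ realise the claimed adjacency relative to the strip bounded by $\LL_n$ and $\LL_{n+1}$). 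I would therefore organise the whole verification as a case split on $f_n$, and inside each case on $f_{n\pm1}$, from the very beginning.
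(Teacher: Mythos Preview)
Your treatment of the first two assertions---identifying the upward apices of $\LL_n$ as the $P_{2m,n}$ and the downward ones as the $P_{2m+1,n}$---is correct.  You do it by directly comparing $\LL_{n-1}$ and $\LL_n$ via Lemma~\ref{lemma.Q(m,n)}(2), whereas the paper argues via slopes (Lemma~\ref{lemma.action of FF}) together with $\langle D\rangle$-periodicity; both routes are short and either is fine.  Your ``moreover'' paragraph is likewise correct and coincides with the paper's: on $\LL_{n+1}$ the vertex $P_{2m,n}$ sits at $Q_{3m+2,n+1}$ or $Q_{3m,n+1}$, one of its two $\LL_{n+1}$-neighbours is $Q_{3m+1,n+1}=P_{2m,n+1}$, and the other has first index $\not\equiv 1\pmod 3$, hence is not an upward apex.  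This already matches what the paper's proof actually establishes.

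The difficulty is your ``furthermore'' step.  You correctly sense that uniqueness \emph{in $\Delta(\varphi)$} is a stronger claim than uniqueness along $\LL_{n+1}$, and you plan to run through all layers $\LL_s$ carrying $P_{2m,n}$ via Lemma~\ref{lemma.P(m,n)vsQ(m,n)2}.  If you actually carry this out you will find that the check \emph{fails} whenever $f_n\ne f_{n+1}$.  For example, with $f_n=R$ and $f_{n+1}=L$ one has $P_{2m,n}=Q_{3m+2,n+2}$ and $P_{2(m+1),n+1}=Q_{3(m+1)+1,n+1}=Q_{3m+3,n+2}$, so these two vertices are adjacent along $\LL_{n+2}$; and $P_{2(m+1),n+1}$ is a \emph{second} upward apex of $\LL_{n+1}$ adjacent to $P_{2m,n}$ in $\Delta(\varphi)$.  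So the obstacle you anticipate is not bookkeeping: the ``furthermore'' clause, read literally with ``in $\Delta(\varphi)$'', is not true in general.  The paper's own proof only checks the two $\LL_{n+1}$-neighbours of $P_{2m,n}$ (and contains a small index slip there), and it is this weaker uniqueness---along $\LL_{n+1}$---that is used immediately afterwards to build the upward lines as the images of the vertical lines $\{(3m+1,n)\}_{n\in\ZZ}$ in $\cellcomplex(\varphi)$.  Your ``moreover'' paragraph already proves exactly that; you should stop there and note that the phrase ``in $\Delta(\varphi)$'' in the statement should be read as ``in $\LL_{n+1}$''.
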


\begin{proof}
Since the proofs of (1) and (2) are parallel, we prove only (1). By
Lemma \ref{lemma.action of FF}, the slope of $P_{m,n}$ is the vertex
of $\sigma_n$ which is not contained in $\sigma_{n-1}$. Hence
$P_{m,n}$ is an upward apex of $\LL_n$. By the periodicity of
$\LL_n$ and $\LL_{n-1}$, every upward apex of $\LL_n$ is obtained
from a single upward apex, say $P_{0,n}$, by taking the conjugate of
$D^m$ for some $m\in\ZZ$. Thus it is equal to the following element
by Lemma \ref{lemma.P(m,n)vsQ(m,n)}(1).
\[
\inn{D^m}(P_{0,n}) = \inn{D^m}(Q_{1,n}) =Q_{3m+1,n}=P_{2m,n}
\]
Hence $\{P_{2m,n}\}_{m\in\ZZ}$ are the only upward apexes of $\LL_n$.

By Lemma \ref{lemma.P(m,n)vsQ(m,n)2}, we have
\[
P_{2m,n}=Q_{3m+1,n}=
\begin{cases}
Q_{3m+2,n+1} & \mbox{if $f_n=R$,}\\
Q_{3m,n+1} & \mbox{if $f_n=L$.}
\end{cases}
\]
Hence $P_{2m,n}$ is a vertex of $\LL_{n+1}$
adjacent to $Q_{3m+1,n+1}=P_{2m,n+1}$.
The other vertex of $\LL_{n+1}$
adjacent to $Q_{3m+1,n+1}$ in $\Delta(\varphi)$
is equal to $Q_{3m,n+1}$ or $Q_{3m+2, n+1}$
according as $f_n=R$ or $L$.
By Lemma \ref{lemma.Q(m,n)}(2),
it is equal to $Q_{3m,n}$ or $Q_{3m+2,n}$ accordingly.
Thus it is contained in $\LL_n$ and therefore
it is not an upward apex of $\LL_{n+1}$.
Hence we obtain the last assertion of (1).
\end{proof}

Lemma \ref{lemma.apex} implies that,
for each upward apex of $\LL_n$,
there is a unique upward apex of $\LL_{n+1}$
adjacent to it in $\Delta(\varphi)$.
Thus
\textit{
by successively joining mutually adjacent upward apexes,
we obtain a bi-infinite \lq vertical' edge path
in $\Delta(\varphi)$.
We call it an {\normalfont upward line}.
Similarly, we define a {\normalfont downward line} to be a
vertical edge path in $\Delta(\varphi)$
obtained by successively joining mutually adjacent
downward apexes.
}
By Lemmas \ref{lemma.apex} and \ref{lemma.P(m,n)vsQ(m,n)}(1),
the vertical line in $\cellcomplex(\varphi)$ with vertex set
$\{(3m+1,n)\}_{n\in\ZZ}$ projects homeomorphically onto
an upward line in $\Delta(\varphi)$, for each $m\in\ZZ$.
Similarly, the vertical line in $\cellcomplex(\varphi)$ with vertex set
$\{(3m+1,d(n))\}_{n\in\ZZ}$ projects homeomorphically onto a
downward line in $\Delta(\varphi)$, for each $m\in\ZZ$.
Since these
two kinds of
vertical lines in $\cellcomplex(\varphi)$
are mutually disjoint,
and since all upward lines and downward lines are obtained in this way,
an upward line and a downward line never cross each other
though they may share some common edges.
Moreover, upward lines and downward lines are located in $\CC$
alternately.
Hence
we may number them so that
\textit{
$\{\partial_{m}^{\Delta}\}_{m\in\ZZ}$ is the set of
upward/downward lines
located in $\Delta(\varphi)$
from left to right,
where $\partial_m^{\Delta}$ is an upward line or a downward line
according as $m$ is even or odd.
}
It should be noted that $D$ maps
$\partial_{m}^{\Delta}$ to $\partial_{m+2}^{\Delta}$
(see Figure \ref{fig.up-down-lines}).

\begin{figure}[t!]
\begin{center}
\setlength{\unitlength}{1truemm}
\begin{picture}(65,100)
\put(0,3){\epsfig{file=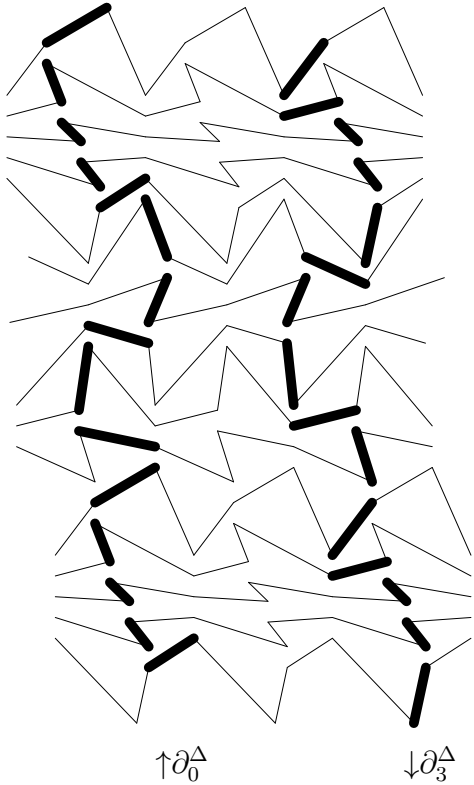, width =6.5truecm}}
\put(25,0){\makebox(0,0)[c]{${\uparrow}\partial_{0}^{\Delta}$}}
\put(58,0){\makebox(0,0)[c]{${\downarrow}\partial_{3}^{\Delta}$}}
\end{picture}
\caption{The upward/downward lines
$\{\partial_{m}^{\Delta}\}_{m\in\ZZ}$ in $\Delta(\varphi)$}\label{fig.up-down-lines}
\end{center}
\end{figure}

\textit{
The union of $\{\partial_{m}^{\Delta}\}_{m\in\ZZ}$
forms a $\langle D\rangle$-CW-decomposition of $\CC$.
We blow up each common edge of adjacent
upward/downward
lines into a bigon to produce a new $\langle D\rangle$-CW-decomposition,
$CW^{\#}(\varphi)$, and let $\partial_m^{\#}$ be the image of
$\partial_{m}^{\Delta}$ in $CW^{\#}(\varphi)$.
We color the interior of the $2$-cells in the new cell-complex
bounded by $\partial_{m}^{\#}$ and $\partial_{m+1}^{\#}$ white or gray
according as $m$ is even or odd.
We also color the vertices shared by $\partial_{m}^{\#}$ and
$\partial_{m+1}^{\#}$ white or gray by the same rule.
Then, by Proposition $\ref{prop.quotient-cellcomplex}(2)$,
we see that the resulting colored
$\langle D\rangle$-CW-complex
$CW^{\#}(\varphi)$ is isomorphic to the colored $\langle D\rangle$-CW-complex
$CW(\varphi)$.
}
Thus we have shown that
the combinatorial structure of the colored
$\langle D \rangle$-CW-complex
$CW(\varphi)$
can be
recovered from that of
the layered $\langle D \rangle$-simplicial complex
$(\Delta(\varphi), \{\LL_n\})$.

\vskip0.5cm

Next, we show that
the combinatorial structure of
the layered $\langle D \rangle$-simplicial complex
$(\Delta(\varphi), \{\LL_n\})$
can be
recovered from that of the colored
$\langle D \rangle$-CW-complex
$CW(\varphi)$.

\begin{lemma}
\label{lemma.2-cell-boundary}
{\normalfont(1)} Suppose $f_n=R$.
Then the open $2$-cell $c_{m,n}$ of $CW(\varphi)$
has the color white and its boundary is the union of
an edge path, $\alpha$, of $\partial_{2m}$ and
an edge path, $\beta$, of $\partial_{2m+1}$
which satisfy the following conditions.
\begin{enumerate}[\normalfont (i)]
\item
$\alpha\cap\beta=\partial\alpha=\partial\beta$
and it consists of the following two vertices.
\begin{align*}
& v_+:=P_{2m,n}=P_{2m+1,n_+}, \\
& v_-:=P_{2m,n_-}=P_{2m+1,n}.
\end{align*}
Moreover, the inverse image of $v_+$
in $\cellcomplex(\varphi)$ is the edge path, $\tilde v_+$,
spanned by the following $n_+-n+1$ vertices:
\[
\{(3m+1,n)\}\cup\{(3m+2,k) \mid n+1\le k\le n_+\}.
\]
Similarly, the inverse image of $v_-$
in $\cellcomplex(\varphi)$ is the edge path, $\tilde v_-$,
spanned by the following $n-n_-+1$ vertices:
\[
\{(3m+1,n_-)\}\cup\{(3m+2,k)\mid n_-+1\le k\le n\}.
\]
Furthermore, the vertices $v_+$ and $v_-$
are contained in the layer $\LL_n$ of $\Delta(\varphi)$
in this order.
\item
$\alpha$ has $n-n_-+1$ vertices $\{P_{2m,k}\mid n_-\le k\le n\}$,
and it is the homeomorphic image of the edge path, $\tilde\alpha$,
in $\cellcomplex(\varphi)$ with vertex set:
\[
\{(3m+1,k)\mid n_-\le k\le n\}.
\]
\item
$\beta$ has $n_+-n+1$ vertices $\{P_{2m+1,k} \mid n\le k\le n_+\}$,
and it is the homeomorphic image of the edge path, $\tilde\beta$, in
$\cellcomplex(\varphi)$ with vertex set:
\begin{gather*}
\{(3m+2,n)\}\cup\{(3m+3,k)\mid n+1\le k\le n_+-1\}\\
\cup \{(3m+2,n_+)\}.
\end{gather*}
\end{enumerate}

{\normalfont(2)} Suppose $f_n=L$.
Then the open $2$-cell $c_{m,n}$ of $CW(\varphi)$
has the color gray and its boundary is the union of
an edge path, $\alpha$, of $\partial_{2m}$ and
an edge path, $\beta$, of $\partial_{2m-1}$
which satisfy the following conditions.
\begin{enumerate}[\normalfont (i)]
\item
$\alpha\cap\beta=\partial\alpha=\partial\beta$
and it consists of the following two vertices.
\begin{align*}
& v_+:=P_{2m,n}=P_{2m-1,n_+},\\
& v_-:=P_{2m,n_-}=P_{2m-1,n_-}.
\end{align*}
Moreover the inverse image of $v_+$
in $\cellcomplex(\varphi)$ is the edge path, $\tilde v_+$,
spanned by the following $n_+-n+1$ vertices:
\[
\{(3m+1,n)\}\cup\{(3m,k)\mid n+1\le k\le n_+\}.
\]
Similarly, the inverse image of $v_-$
in $\cellcomplex(\varphi)$ is the edge path, $\tilde v_-$,
spanned by the following $n-n_-+1$ vertices:
\[
\{(3m+1,n_-)\}\cup\{(3m,k)\mid n_-+1\le k\le n\}.
\]
Furthermore, the vertices $v_-$ and $v_+$
are contained in the layer $\LL_n$ of $\Delta(\varphi)$
in this order.
\item
$\alpha$ has $n-n_-+1$ vertices $\{P_{2m,k}\mid n_-\le k\le n\}$,
and it is the homeomorphic image of the edge path, $\tilde\alpha$,
in $\cellcomplex(\varphi)$ with vertex set:
\[
\{(3m+1,k)\mid n_-\le k\le n\}.
\]
\item
$\beta$ has $n_+-n+1$ vertices $\{P_{2m-1,k} \mid n\le k\le n_+\}$,
and it is the homeomorphic image of the edge path, $\tilde\beta$, in
$\cellcomplex(\varphi)$ with vertex set:
\[
\{(3m,n)\}\cup\{(3m-1,k)\mid n+1\le k\le n_+-1\} \cup \{(3m,n_+)\}.
\]
\end{enumerate}
\end{lemma}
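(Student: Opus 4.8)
The plan is to read the boundary of $c_{m,n}$ directly off the explicit quadrilateral $c'_{m,n}$ of $CW'(\varphi)$ given in Definition \ref{def.CW'}, transport it through the slanted-edge collapse $CW'(\varphi)\to CW^*(\varphi)$ and the isomorphism $CW^*(\varphi)\cong CW(\varphi)$ of Theorem \ref{Th.CDTessellation}, and then identify the preimages in $\cellcomplex(\varphi)$ of the cells so obtained, using the identification $CW(\varphi)=CW^{**}(\varphi)$ of Proposition \ref{prop.quotient-cellcomplex}(2) together with the identities among elliptic generators in Lemmas \ref{lemma.Q(m,n)}, \ref{lemma.P(m,n)vsQ(m,n)} and \ref{lemma.P(m,n)vsQ(m,n)2}. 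It suffices to prove (1): assertion (2) follows by the parallel argument applied to the $L$-type quadrilateral, the only essential change being the left/right reflection that interchanges which of $v_+,v_-$ is met first along $\LL_n$ (this is exactly the difference between the white strips $[2m,2m+1]\times\RR$ and the gray strips $[2m-1,2m]\times\RR$).

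First I would analyse $\partial c'_{m,n}$ when $f_n=R$. By Definition \ref{def.CW'}(F1), $c'_{m,n}$ is the quadrilateral on the vertices $(2m,n_-),(2m,n),(2m+1,n),(2m+1,n_+)$, lying in $[2m,2m+1]\times\RR$, hence white. Since $f_{n_-}=f_n=R$ and $(n_-)_+=n$, two of its sides, $\langle(2m,n_-),(2m+1,n)\rangle$ and $\langle(2m,n),(2m+1,n_+)\rangle$, are slanted edges of $CW'(\varphi)$, and the other two are the vertical edge paths $\tilde\alpha'$ on $\{(2m,k)\mid n_-\le k\le n\}$ and $\tilde\beta'$ on $\{(2m+1,k)\mid n\le k\le n_+\}$; moreover $\bar c'_{m,n}$ contains no slanted edge other than these two, so no further collapsing affects it. Collapsing the two slanted edges turns $c'_{m,n}$ into a bigon $c^*_{m,n}$ with vertices $v_-:=[(2m,n_-)]=[(2m+1,n)]$ and $v_+:=[(2m,n)]=[(2m+1,n_+)]$ and sides the images $\alpha,\beta$ of $\tilde\alpha',\tilde\beta'$. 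By Theorem \ref{Th.CDTessellation} (and Proposition \ref{prop.verticesCW}(2)), $v_-=P_{2m,n_-}=P_{2m+1,n}$, $v_+=P_{2m,n}=P_{2m+1,n_+}$, $\alpha$ is an edge path of $\partial_{2m}$, $\beta$ is an edge path of $\partial_{2m+1}$, and $\alpha\cap\beta=\partial\alpha=\partial\beta=\{v_-,v_+\}$. Finally, no two of the vertices $P_{2m,k}$ $(n_-\le k\le n)$, and no two of the vertices $P_{2m+1,k}$ $(n\le k\le n_+)$, are identified under the collapse, because the slanted edges incident to the relevant intermediate vertices leave the strip $[2m,2m+1]\times\RR$; hence $\alpha$ and $\beta$ carry exactly the vertex lists claimed in (ii) and (iii).

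Next I would transport to $\cellcomplex(\varphi)$. Under the identification of Proposition \ref{prop.quotient-cellcomplex}(2), $P_{2m,k}$ is represented by $(3m+1,k)$ and $P_{2m+1,k}$ by $(3m+d(k),k)$. Hence the preimage of $\alpha$ is the edge path $\tilde\alpha$ on $\{(3m+1,k)\mid n_-\le k\le n\}$, joined by the vertical edges $\langle(3m+1,k),(3m+1,k+1)\rangle$ of Definition \ref{definition.parents-complex}; using $d(n)=d(n_+)=2$ and $d(k)=3$ for $n<k<n_+$, the preimage of $\beta$ is the edge path $\tilde\beta$ on $\{(3m+2,n)\}\cup\{(3m+3,k)\mid n+1\le k\le n_+-1\}\cup\{(3m+2,n_+)\}$, joined by the vertical edges $\langle(3m+d(k),k),(3m+d(k+1),k+1)\rangle$. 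The preimage of $v_+=P_{2m,n}$ is $\{(m',n')\mid Q_{m',n'}=P_{2m,n}\}$, which by Lemma \ref{lemma.P(m,n)vsQ(m,n)2}(1) equals $\{(3m+1,n)\}\cup\{(3m+2,k)\mid n+1\le k\le n_+\}$; by Lemma \ref{lemma.Q(m,n)}(3) these vertices form the edge path $\tilde v_+$ via the slanted edges $\langle(3m+1,n),(3m+2,n+1)\rangle$ and $\langle(3m+2,k),(3m+2,k+1)\rangle$ $(n+1\le k\le n_+-1)$. Applying the same with $n$ replaced by $n_-$ (noting $(n_-)_+=n$) gives $\tilde v_-$ on $\{(3m+1,n_-)\}\cup\{(3m+2,k)\mid n_-+1\le k\le n\}$. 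Since $(3m+1,n)$ and $(3m+2,n)$ both lie on the horizontal line at height $n$, i.e.\ on $\LL_n$, the vertices $v_+$ and $v_-$ lie on $\LL_n$ in that order, left to right. Together with the previous paragraph this proves (1), and (2) is entirely analogous.

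The step I expect to be the main obstacle is the simultaneous bookkeeping through the two different quotient maps onto $CW(\varphi)$: one must confirm that each of $\tilde\alpha,\tilde\beta,\tilde v_\pm$ is the \emph{full} preimage of its cell and that its vertices are joined into an edge path by edges of $\cellcomplex(\varphi)$ of the stated type, and one must check the degenerate case $n_+=n+1$ (where $\{(3m+2,k)\mid n+1\le k\le n_+\}$ is a single vertex and $\{(3m+3,k)\mid n+1\le k\le n_+-1\}$ is empty, so $\tilde\beta$ degenerates to a single slanted-then-slanted configuration). All of this reduces to the identities of Lemmas \ref{lemma.Q(m,n)}, \ref{lemma.P(m,n)vsQ(m,n)}, \ref{lemma.P(m,n)vsQ(m,n)2} and the explicit edge set of $\cellcomplex(\varphi)$ in Definition \ref{definition.parents-complex}.
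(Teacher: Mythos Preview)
Your proposal is correct and follows essentially the same approach as the paper: read off the boundary of $c'_{m,n}$ from Definition~\ref{def.CW'}, collapse the two slanted edges to obtain $\alpha,\beta$ and $v_\pm$ via Theorem~\ref{Th.CDTessellation}, and then use Lemmas~\ref{lemma.P(m,n)vsQ(m,n)} and~\ref{lemma.P(m,n)vsQ(m,n)2} to identify the preimages in $\cellcomplex(\varphi)$ and the position of $v_\pm$ on $\LL_n$. Your write-up is simply more explicit than the paper's terse version, in particular in checking that $\tilde\alpha,\tilde\beta,\tilde v_\pm$ are genuine edge paths in $\cellcomplex(\varphi)$ and in flagging the degenerate case $n_+=n+1$.
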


\begin{proof}
By the definition of $CW^*(\varphi)$ and by Remark
\ref{remark.CW-vertices}, the boundary  of $c_{m,n}$ is the union of
an edge path $\alpha$ in $\partial_{2m}$ and $\beta$ in
$\partial_{2m\pm 1}$ such that
$\alpha\cap\beta=\partial\alpha=\partial\beta$,   the vertex set
of $\alpha$ is $\{P_{2m,k}\mid n_-\le k\le n\}$, and   the
vertex set of $\beta$ is $\{P_{2m\pm 1,k} \mid n\le k\le n_+\}$.
Here the signs $\pm$ stand for $+$ or $-$ according as $f_n=R$ or
$L$. By Lemma \ref{lemma.P(m,n)vsQ(m,n)2}, we see that the condition
(i) is satisfied, except the last statement that $v_+$ and $v_-$ are
contained in the layer $\LL_n$ of $\Delta(\varphi)$ in this order.
The last statement in (i), for the case $f_n=R$, follows from the
fact that $v_+$ and $v_-$, respectively, are the images of the
vertices $(3m+1,n)$ and $(3m+2,n)$ of $\cellcomplex(\varphi)$ and
the fact that the layer $\LL_n$ is the image of the horizontal line
of height $n$ in $\cellcomplex(\varphi)$. The statement for the case
$f_n=L$ is proved similarly. The remaining conditions (ii) and (iii)
follow from Lemma \ref{lemma.P(m,n)vsQ(m,n)}(1).
\end{proof}

In the above lemma,
the union of the edge path $\tilde\alpha$, $\tilde\beta$,
$\tilde v_+$ and $\tilde v_-$
forms a simple closed edge path in $\cellcomplex(\varphi)$.
Let $\tilde c_{m,n}$
be the $2$-cell bounded by it
(see Figure \ref{fig.lifted-c(m,n)}).
Consider first the case where
the $2$-cell $c_{m,n}$ is colored white (i.e.,
$f_n=R$).
Then the horizontal edges of $\cellcomplex(\varphi)$
contained in $\tilde c_{m,n}$
are as follows:
\begin{align*}
& \langle  (3m+1,k),(3m+2,k)\rangle \quad  (n_-+1\le k\le n),\\
& \langle  (3m+2,k),(3m+3,k)\rangle \quad  (n\le k\le n_+-1).
\end{align*}
By using Lemma \ref{lemma.2-cell-boundary},
we see that the images of these edges in $c(m,n)$
constitute the following three families of arcs,
which have mutually disjoint interiors:
\begin{itemize}
\item
an arc joining $v_-$ and $v_+$,
\item
arcs joining $v_-$ with the vertices contained in the interior of $\alpha$,
\item
arcs joining $v_+$ with the vertices contained in the interior of $\beta$.
\end{itemize}
The same result holds
when the color of $c_{m,n}$ is white.
This observation leads us to the following recipe
for recovering $\Delta(\varphi)$
from the colored $\langle D\rangle$-CW-complex $CW(\varphi)$.

\textit{
Let $c$ be a $2$-cell of $CW(\varphi)$.
Then it contains exactly two vertices,
$v_+$ and $v_-$, which share the same color with $c$.
We assume $v_+$ lies \lq above' $v_-$.
These two vertices divide $\partial c$
into two arcs, $\alpha$ and $\beta$.
We assume $\alpha$ lies on the left or right hand side of $c$
according as the color of $c$ is white or gray.
In the $2$-cell $c$, we draw the following families of arcs
with mutually disjoint interiors
$($see Figure $\ref{fig.lifted-c(m,n)})$:
\begin{itemize}
\item
an arc joining $v_-$ and $v_+$,
\item
arcs joining $v_-$ with
the vertices contained in the interior of $\alpha$,
\item
arcs joining $v_+$ with the vertices contained in the interior of $\beta$,
\item
arcs joining consecutive vertices of $\alpha$ not already
joined by the foregoing,
\item arcs joining consecutive vertices of $\beta$ not already
joined by the foregoing.
\end{itemize}
Next, shrink each bigon
into
a straight
edge until there are no bigons.}

\textit{
The number of bigons equals the number of vertices and is $2$ more than the
number of triangles $($see Figure $\ref{fig.lifted-c(m,n)})$.
The  arc joining  $v_-$ and $v_+$ is called
the {\normalfont central edge} associated with the $2$-cell $c$.}

\textit{
Perform the above operation at every $2$-cell of $CW(\varphi)$.
Then it follows from the preceding observation that
the resulting $CW$-decomposi\-tion of
$\CC$ is combinatorially isomorphic to $\Delta(\varphi)$.
}

\begin{figure}[t!]
\begin{center}
\setlength{\unitlength}{1truecm}
\begin{picture}(12,9)
\put(2,1){\epsfig{file=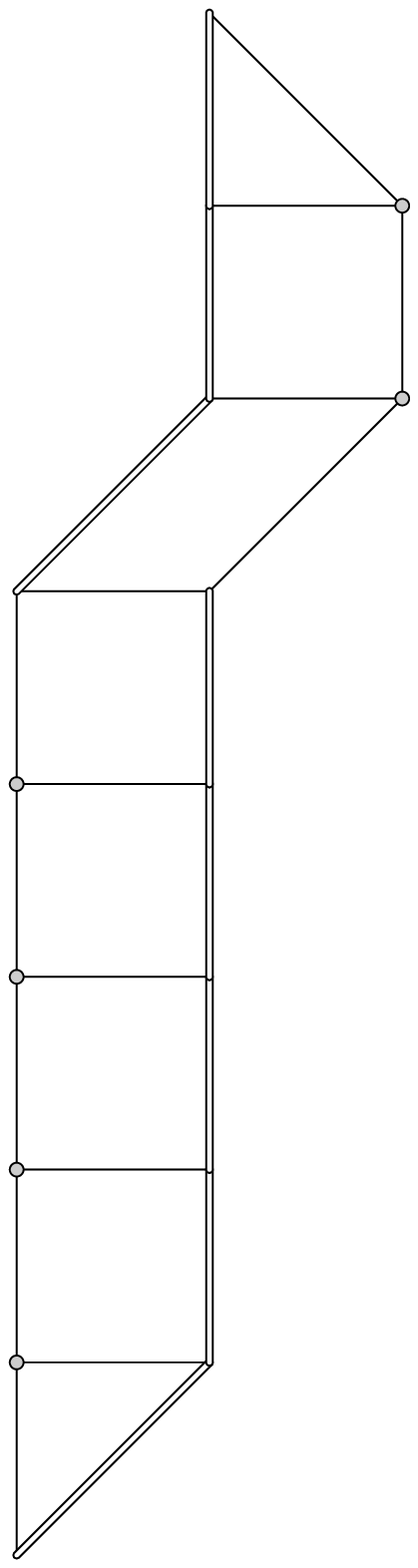, width =2.1truecm}}
\put(3.7,5){\vector(1,0){1.7}}
\put(6,3){\epsfig{file=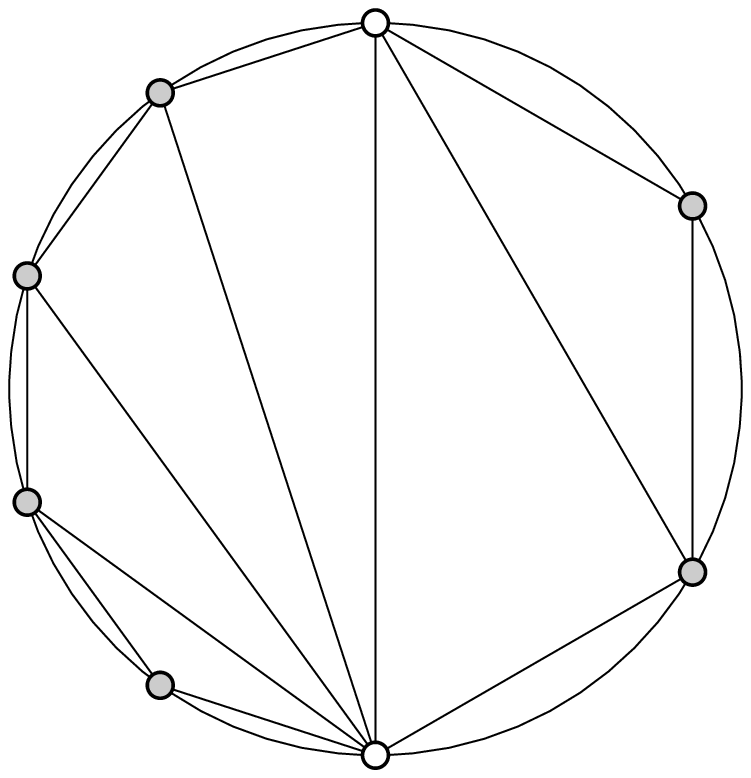, width =5truecm}}
\put(2.7,7.4){\makebox(0,0)[c]{$\tilde v_+$}}
\put(4.3,7.4){\makebox(0,0)[c]{$\tilde \beta$}}
\put(8.5,8.3){\makebox(0,0)[c]{$ v_+$}}
\put(3.4,4.1){\makebox(0,0)[c]{$\tilde v_-$}}
\put(1.7,4.1){\makebox(0,0)[c]{$\tilde \alpha$}}
\put(5.8,5.6){\makebox(0,0)[c]{$\alpha$}}
\put(11.1,5.6){\makebox(0,0)[c]{$\beta$}}
\put(8.6,2.9){\makebox(0,0)[c]{$ v_-$}}
\put(2.8,1){\makebox(0,0)[c]{$\tilde c_{m,n}$}}
\put(8.4,2.3){\makebox(0,0)[c]{$c_{m,n}$}}
\put(2,9){\makebox(0,0)[c]{$\scriptstyle 3m+1$}}
\put(3,9){\makebox(0,0)[c]{$\scriptstyle 3m+2$}}
\put(4.1,9){\makebox(0,0)[c]{$\scriptstyle 3m+3$}}
\put(0.2,1.2){\makebox(0,0)[l]{$\scriptstyle n_-$}}
\put(0.2,2.1){\makebox(0,0)[l]{$\scriptstyle n_-+1$}}
\put(0.2,3.1){\makebox(0,0)[l]{$\scriptstyle n_-+2$}}
\put(0.4,3.6){\makebox(0,0)[c]{$\scriptstyle \vdots$}}
\put(0.2,4){\makebox(0,0)[l]{$\scriptstyle n-2$}}
\put(0.2,5){\makebox(0,0)[l]{$\scriptstyle n-1$}}
\put(0.2,6){\makebox(0,0)[l]{$\scriptstyle n$}}
\put(0.2,7){\makebox(0,0)[l]{$\scriptstyle n+1$}}
\put(0.4,7.5){\makebox(0,0)[c]{$\scriptstyle \vdots$}}
\put(0.2,7.9){\makebox(0,0)[l]{$\scriptstyle n_+-1$}}
\put(0.2,8.7){\makebox(0,0)[l]{$\scriptstyle n_+$}}
\end{picture}
\caption{The $2$-cell $c_{m,n}$ in $CW(\varphi)$ and
the chosen region
$\tilde c_{m,n}$ in $\cellcomplex(\varphi)$}\label{fig.lifted-c(m,n)}
\end{center}
\end{figure}

In order to reconstruct the layered structure $\LL_n$ of $\Delta(\varphi)$,
we need the following observation.
Let $v$ be a vertex of $\Delta(\varphi)$, and let
$$\LL_p, \LL_{p+1}, \cdots, \LL_q$$
 be the layers of $\Delta(\varphi)$
containing $v$.
For each $n$ ($p\le n\le q$),
let $e_n^{(\ell)}$, resp.  $e_n^{(r)}$,  be the edge  of $\LL_n$
which is incident on $v$ and
lies on the left-hand, resp. right-hand, side of $v$.
Then the edges incident on $v$ are located around $v$
in the following clockwise cyclic order:
\begin{equation}
\label{edge-cyclic-order}
e_p^{(\ell)}, e_{p+1}^{(\ell)}, \cdots, e_q^{(\ell)},
e_q^{(r)}, e_{q-1}^{(r)}, \cdots, e_p^{(r)}.
\end{equation}
We call
$e_p^{(\ell)}$, $e_p^{(r)}$,
$e_q^{(\ell)}$ and $e_q^{(r)}$, respectively,
the \textit{lower-left edge}, the \textit{lower-right edge},
the \textit{upper-left edge}, and the \textit{upper-right edge} of $v$.
If we could identify one of the above four \lq characteristic' edges of $v$,
then we
would
know the \lq local structure' of the layered structure
around the vertex
$v$, because
the information on the characteristic edge together with
the cyclic order (\ref{edge-cyclic-order})
tells us
how the edges incident on $v$
are paired by the layered structure.
In fact, if we could identify, say the lower-left edge $e_p^{(\ell)}$, of
$v$, then
the cyclic order (\ref{edge-cyclic-order})
extends
to an order, by regarding $e_p^{(\ell)}$ as the first edge,
such that the $i$-th edge and $(2d+1-i)$-th edge belong to the same layer for
each $i$ ($1\le i\le d$), where
$d=q-p+1$ (and hence $2d$ is the valence of $v$).
The following lemma enables us to identify the
four \lq characteristic' edges of
each of the vertices of $CW(\varphi)$
(see Figure \ref{fig.local-layered-structure}).

\begin{figure}[t!]
\begin{center}
\setlength{\unitlength}{1truecm}
\begin{picture}(6,8.9)
\put(0,0){\epsfig{file=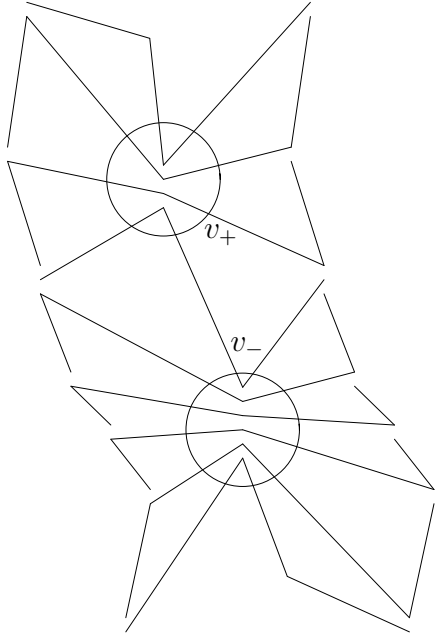, width =6truecm}}
\put(3,5.5){\makebox(0,0)[c]{$v_+$}}
\put(3.35,4){\makebox(0,0)[c]{$v_-$}}
\end{picture}
\caption{The local layered structure around the vertices $v_+$ and $v_-$}\label{fig.local-layered-structure}
\end{center}
\end{figure}

\begin{lemma}
\label{lemma.layer}
Under the setting of Lemma \ref{lemma.2-cell-boundary},
the following holds.
If $c_{m,n}$ is a white $2$-cell,
then the central edge
$\langle v_+,v_-\rangle$
is the upper-left edge of $v_-$ and
is the lower-right edge of $v_+$.
If $c_{m,n}$ is a gray $2$-cell,
then the central edge $\langle v_-,v_+\rangle$
is the upper-right edge of
$v_-$
and
is the lower-left edge of
$v_+$.
\end{lemma}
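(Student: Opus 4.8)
The plan is to reduce the statement to the explicit combinatorics of $\cellcomplex(\varphi)$ together with the complete list of coincidences among the $P_{m,n}$ provided by Lemma~\ref{lemma.P(m,n)vsQ(m,n)2}. I will use repeatedly that the layer $\LL_k$ of $\Delta(\varphi)$ is the image of the horizontal line of height $k$ in $\cellcomplex(\varphi)$, and that, under the projection $\cellcomplex(\varphi)\to\Delta(\varphi)$ of Proposition~\ref{prop.quotient-cellcomplex}, increasing the first coordinate corresponds to moving to the right (since $D$ acts by $(x,y)\mapsto(x+3,y)$ on $\cellcomplex(\varphi)$ and by $z\mapsto z+1$ on $\Delta(\varphi)$). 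Consequently, to identify which of the four characteristic edges of $v_\pm$ the central edge $\langle v_+,v_-\rangle$ is, it suffices to determine (a) the range of layers containing $v_+$ and the range of layers containing $v_-$, and (b) the left-to-right order of $v_+$ and $v_-$ inside the common layer carrying the central edge.

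First I would settle (a) and (b) in the white case $f_n=R$. Applying Lemma~\ref{lemma.P(m,n)vsQ(m,n)2}(1) at the index $n_-$ (for which $f_{n_-}=R$ and $(n_-)_+=n$) shows that $Q_{m',n'}$ equals $v_-=P_{2m,n_-}=P_{2m+1,n}$ precisely for $(m',n')\in\{(3m+1,n_-)\}\cup\{(3m+2,k)\mid n_-+1\le k\le n\}$; hence the layers containing $v_-$ are exactly $\LL_{n_-},\dots,\LL_n$, so in the notation of the preceding discussion the vertex $v_-$ has $p=n_-$ and $q=n$. Applying the same part of Lemma~\ref{lemma.P(m,n)vsQ(m,n)2} at $n$ itself shows $Q_{m',n'}=v_+=P_{2m,n}$ precisely for $(m',n')\in\{(3m+1,n)\}\cup\{(3m+2,k)\mid n+1\le k\le n_+\}$, so $v_+$ lies in the layers $\LL_n,\dots,\LL_{n_+}$ and has $p=n$ and $q=n_+$; in particular $v_+$ lies above $v_-$, as the recipe assumes. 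For (b), by Lemma~\ref{lemma.2-cell-boundary}(1)(i) the central edge is the image of the horizontal edge of $\cellcomplex(\varphi)$ at height $n$ joining the vertex $(3m+1,n)$ of $\tilde v_+$ to the vertex $(3m+2,n)$ of $\tilde v_-$, so the central edge lies in the layer $\LL_n$, and since $(3m+1,n)$ precedes $(3m+2,n)$ on that horizontal line, $v_+$ precedes $v_-$ in $\LL_n$ (this is exactly the last clause of Lemma~\ref{lemma.2-cell-boundary}(1)(i)).

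Combining (a) and (b): $\LL_n$ is the topmost layer of $v_-$ (the one indexed $q$) and the bottom layer of $v_+$ (the one indexed $p$), and along $\LL_n$ the central edge leaves $v_-$ to the left and leaves $v_+$ to the right. By the definition of the characteristic edges via the cyclic order~\eqref{edge-cyclic-order}, this means the central edge is $e_q^{(\ell)}$ at $v_-$, i.e.\ its upper-left edge, and $e_p^{(r)}$ at $v_+$, i.e.\ its lower-right edge; this is the white case of the lemma. The gray case $f_n=L$ is identical in structure, using Lemma~\ref{lemma.P(m,n)vsQ(m,n)2}(2) and Lemma~\ref{lemma.2-cell-boundary}(2): one gets $v_-=P_{2m,n_-}=P_{2m-1,n_-}$ with $p=n_-$, $q=n$, and $v_+=P_{2m,n}=P_{2m-1,n_+}$ with $p=n$, $q=n_+$, the central edge is again an edge of $\LL_n$ (now the image of the height-$n$ horizontal edge joining $(3m,n)\in\tilde v_-$ to $(3m+1,n)\in\tilde v_+$), and in $\LL_n$ the vertex $v_-$ precedes $v_+$; hence the central edge is the upper-right edge of $v_-$ and the lower-left edge of $v_+$. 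I expect the only genuine hazard to be bookkeeping: keeping the left/right and above/below conventions, the indices $n_\pm$, and the roles of $p$ versus $q$ straight, and checking that the two cases are genuine mirror images. No new geometric input is required, since everything is already packaged in Lemmas~\ref{lemma.2-cell-boundary} and~\ref{lemma.P(m,n)vsQ(m,n)2} and the layered structure of $\cellcomplex(\varphi)$, so the argument should be short once these conventions are fixed.
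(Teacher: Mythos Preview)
Your argument is correct and, in fact, slightly more efficient than the paper's own proof. Both proofs work entirely inside $\cellcomplex(\varphi)$ and use the identification of $\LL_k$ with the image of the height-$k$ horizontal line, but they finish differently. The paper argues \emph{locally}: it names the neighbors $w_\pm$ of $v_\pm$ in $\LL_n$, then inspects the two triangles of $\Delta(\varphi)$ adjacent to the central edge (via a small half-circle in $\cellcomplex(\varphi)$ and an explicit pentagonal cell bounded by slanted and horizontal edges) to show that their third edges land in $\LL_{n-1}$ and $\LL_{n+1}$, which forces the central edge to be the lower-right edge of $v_+$ and the upper-left edge of $v_-$. You argue \emph{globally}: you read off from Lemma~\ref{lemma.P(m,n)vsQ(m,n)2} (equivalently from the description of $\tilde v_\pm$ in Lemma~\ref{lemma.2-cell-boundary}) the exact range $[p,q]$ of layers containing each of $v_+$ and $v_-$, observe that $\LL_n$ is the bottom layer of $v_+$ and the top layer of $v_-$, and combine this with the left-right order in $\LL_n$ (already recorded in Lemma~\ref{lemma.2-cell-boundary}(i)) to identify the characteristic edge directly from the definition via~\eqref{edge-cyclic-order}. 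Your route avoids naming the adjacent triangles and their third edges altogether; the paper's route, on the other hand, makes the local picture (Figure~\ref{fig.proof-layer-lemma}) explicit, which is helpful for the reader following the subsequent recipe for rebuilding the layered structure.
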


\begin{proof}
We prove the lemma when $c_{m,n}$ is a white $2$-cell.
The other case is proved similarly.
So, assume $c_{m,n}$ is a white $2$-cell.
Then $v_+$ and $v_-$, respectively,
are the images of the vertices
$(3m+1,n)$ and $(3m+2,n)$ of
$\cellcomplex(\varphi)$,
and the layer $\LL_n$ of $\Delta(\varphi)$ contains the edge
$\langle v_+,v_-\rangle$
by Lemma \ref{lemma.2-cell-boundary}.
Thus the image, $w_+$, of $(3m,n)$ is the predecessor of $v_+$ in $\LL_n$,
and the image, $w_-$, of $(3m+3,n)$ is the successor of $v_-$ in $\LL_n$.

We first
show that $\langle v_+,v_-\rangle$
is the lower-right edge of $v_+$.
Observe that there is no slanted edge of $\cellcomplex(\varphi)$
which has endpoint $(3m+1,n)$ and which is contained in $\RR\times [n-1,n]$
(cf. Definition \ref{definition.parents-complex}
and Figure \ref{fig.proof-layer-lemma}).
Consider a small half circle $\tilde c$ with center $(3m+1,n)$
contained in $\RR\times [n-1,n]$
with endpoints  $(3m+1\pm\epsilon, n)$ for some
small positive real number $\epsilon$.
Then the interior of $c$ is contained in an open $2$-cell of
$\cellcomplex(\varphi)$ which has
a horizontal edge of height $n-1$
and has the following horizontal edges:
\[
\langle (3m,n), (3m+1,n)\rangle,
\langle (3m+1,n), (3m+2,n)\rangle.
\]
Thus $\tilde c$ projects to an arc $c$ around the vertex $v_+$
such that (i) $c$ is contained in a triangle of $\Delta(\varphi)$,
(ii) $\langle w_+, v_+\rangle$ and $\langle v_+, v_-\rangle$
are edges of the triangle and each of them contains an
endpoint of $c$, and that
(iii) the remaining edge of the triangle
belongs to the layer $\LL_{n-1}$.
Then it follows that
$\langle w_+, v_+, v_-\rangle$ is the triangle of
$\Delta(\varphi)$ containing $c$
and that the edge $\langle w_+, v_-\rangle$ belongs to
the layer
$\LL_{n-1}$. Hence $\langle v_+,v_-\rangle$
is the lower-right edge of $v_+$.

Next, we show that $\langle v_+,v_-\rangle$ is the upper-left edge of $v_-$.
To this end, note that the assumption $f_n=R$ implies that
$\cellcomplex(\varphi)$ has the
following two slanted edges
(see Figure \ref{fig.proof-layer-lemma}):
\[
\langle (3m+1,n), (3m+2,n+1)\rangle,
\langle (3m+3,n), (3m+3,n+1)\rangle.
\]
Observe that these two slanted edges and the
three horizontal edges
\begin{align*}
&\langle (3m+1,n), (3m+2,n)\rangle,
\langle (3m+2,n), (3m+3,n)\rangle,\\
&\langle (3m+2,n+1), (3m+3,n+1)\rangle
\end{align*}
bound a $2$-cell of $\cellcomplex(\varphi)$.
This implies that
$\langle v_+, v_-, w_-\rangle$
is a $2$-simplex of $\Delta(\varphi)$
and that $\langle v_+, w_-\rangle$ is an edge
of the layer $\LL_{n+1}$.
Hence $\langle v_+,v_-\rangle$ is the upper-left edge of $v_-$.
\end{proof}

\begin{figure}[t!]
\begin{center}
\setlength{\unitlength}{1truecm}
\begin{picture}(11.5,2.7)
\put(0.15,0.2){\epsfig{file=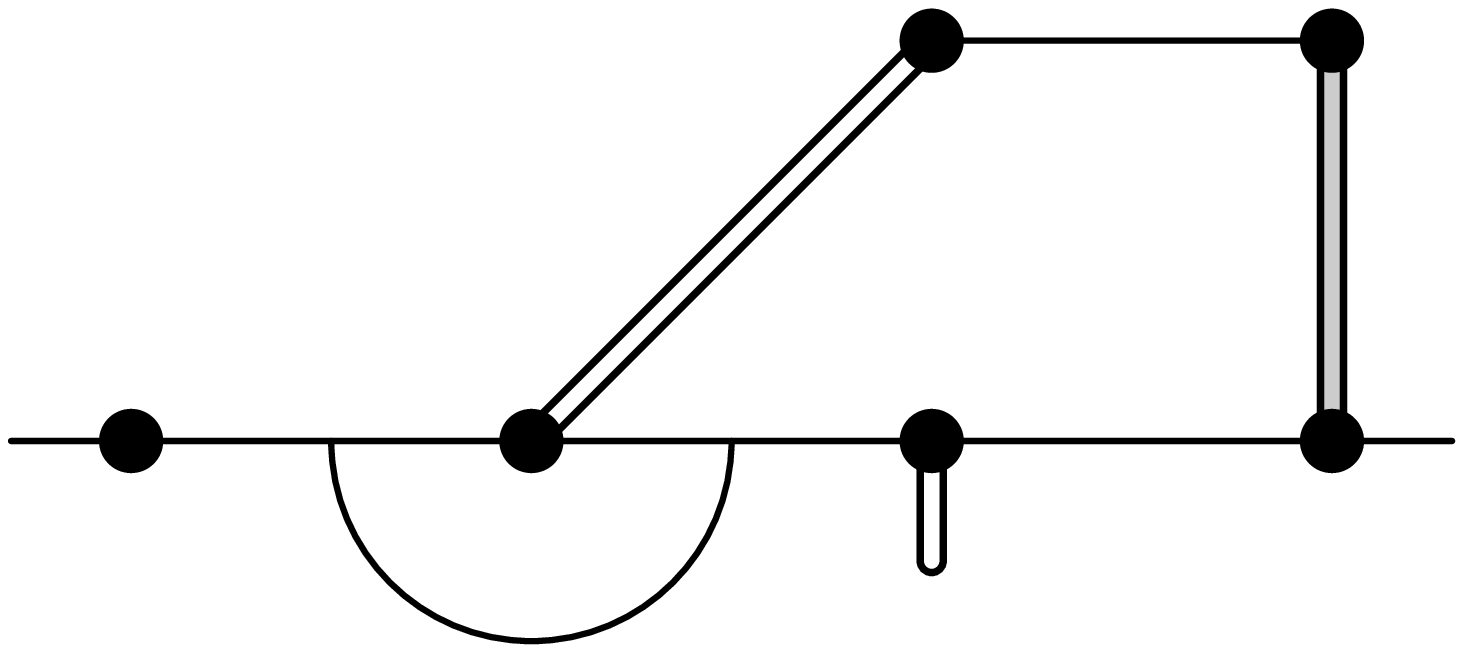, width =5truecm}}
\put(6,0.2){\epsfig{file=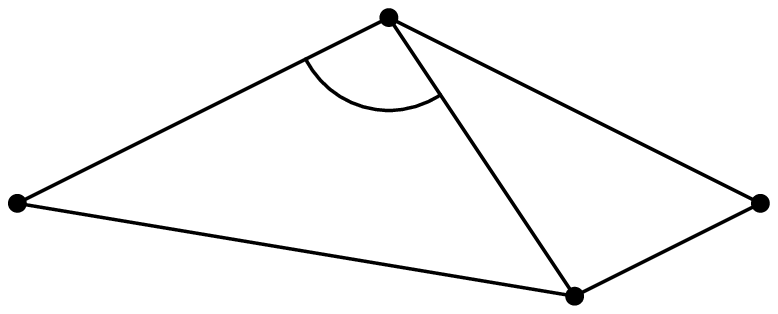, width =5truecm}}
\put(.7,2.6){\makebox(0,0)[c]{$\scriptstyle 3m$}}
\put(2.05,2.6){\makebox(0,0)[c]{$\scriptstyle 3m+1$}}
\put(3.25,2.6){\makebox(0,0)[c]{$\scriptstyle 3m+2$}}
\put(4.65,2.6){\makebox(0,0)[c]{$\scriptstyle 3m+3$}}
\put(0,.9){\makebox(0,0)[r]{$\scriptstyle n$}}
\put(0,2.3){\makebox(0,0)[r]{$\scriptstyle n+1$}}
\put(2.1,0){\makebox(0,0)[r]{$\tilde c$}}
\put(10.5,1.7){\makebox(0,0)[r]{$\LL_{n+1}$}}
\put(8.1,0.4){\makebox(0,0)[r]{$\LL_{n-1}$}}
\put(8.8,2.28){\makebox(0,0)[r]{$v_+$}}
\put(8.4,1.3){\makebox(0,0)[r]{$c$}}
\put(11.5,.9){\makebox(0,0)[r]{$w_-$}}
\put(6.05,.9){\makebox(0,0)[r]{$w_+$}}
\put(10,0){\makebox(0,0)[r]{$v_-$}}
\end{picture}
\caption{The layer $\LL_n$ passing through $v_+$ and $v_-$}
\label{fig.proof-layer-lemma}
\end{center}
\end{figure}

The above lemma leads us to the following recipe
for recovering the layered structure $\{\LL_n\}_{n\in\ZZ}$ of
$\Delta(\varphi)$ from the
colored $\langle D\rangle$-CW-complex $CW(\varphi)$.

\textit{
Let $v$ be a vertex of $\Delta(\varphi)$.
Since it is also a vertex of $CW(\varphi)$, it has a color white or gray.
Let $e_1,e_2,\cdots, e_{2d}$ be the edges of the $\Delta(\varphi)$
incident on $v$ in counter-clockwise or clockwise order
according as $v$ is white or gray.
By the construction of $CW(\varphi)$,
there are precisely two $2$-cells
of $CW(\varphi)$ which have $v$ as a vertex
and share the same color with $v$.
Let $c$ be such
a
$2$-cell which lies below $v$, and let
$\langle v_+, v_-\rangle$ be the central edge associated with $c$,
where $v=v_+$.
We may assume, after cyclic permutation,
that $e_1=\langle v_+, v_-\rangle$.
Then the couplings
$\{(e_i,e_{2d-i+1})\}_{1\le i\le d}$
gives the desired \lq\lq local layered structure around $v$''.
$($In fact, $e_1$ is the lower-right or upper-right vertex of $v$
according as $v$ is white or gray by
Lemma $\ref{lemma.layer}$.
So the above local layered structure at each vertex $v$
is consistent with the layered structure $\{\LL_n\}$ of $\Delta(\varphi).)$
By combining this local information
at the vertices of
$\Delta(\varphi)$, we obtain the layered structure.
}
Thus we have shown that
the combinatorial structure of
the layered $\langle D \rangle$-simplicial complex
$(\Delta(\varphi), \{\LL_n\})$
can be recovered from that of
the colored
$\langle D \rangle$-CW-complex $CW(\varphi)$.
This completes the proof of Theorem \ref{maintheorem}.

\begin{remark}
\rm
The main theorem, Theorem \ref{maintheorem},
is actually valid for all
doubly degenerate punctured-torus groups
with
\lq\lq bounded geometry''.
(See the celebrated paper \cite{Minsky} by Minsky,
for the classification of punctured-torus groups.)
In fact, the description of the Cannon-Thurston maps
given by Bowditch \cite{Bowditch} is valid for every
such group and thus a fractal tessellation of the complex plane
is naturally associated with the group, for which
an analogue of Theorem \ref{Th.CDTessellation} holds.
On the other hand, the canonical decompositions of
the quotient hyperbolic manifolds
associated with the punctured-torus groups
are determined by Akiyoshi \cite{Akiyoshi} and
Gueritaud \cite{Gueritaud2}.
In particular, an analogue of Theorem \ref{thm.canonical-decomposition}
holds for all punctured-torus groups.
These two results guarantee that
the proof of Theorem \ref{maintheorem} works
for all doubly degenerate punctured-torus groups
with bounded geometry,
and hence we have
an analogue of Theorem \ref{maintheorem} for such groups.
\end{remark}

\end{document}